\def\Str{\mathop{\mathrm{Str}}\nolimits}
\def\dom{\mathop{\mathrm{Dom}}\nolimits}
\def\powerset#1{\mathscr{P}(#1)}
\def\range{\mathop{\mathrm{Range}}\nolimits}
\def\Aut{\mathop{\mathrm{Aut}}\nolimits}
\def\cl{\mathop{\mathrm{Cl}}\nolimits}
\def\Scc{\mathop{\mathrm{Cc}}\nolimits}
\def\str#1{\mathbf {#1}}
\def\ostr#1{{\vv{\mathbf {#1}}}}
\def\vvrev#1{\vv{#1}^{\mathrm{op}}}
\def\arity#1{\mathrm{arity}(\rel{}{#1})}
\def\arityf#1{\mathrm{arity}(\func{}{#1})}
\def\nbfunc#1#2{F\ifstrempty{#1}{}{_{#1}}\ifstrempty{#2}{}{^{#2}}}
\def\nbrel#1#2{R\ifstrempty{#1}{}{_{#1}}\ifstrempty{#2}{}{^{#2}}}
\def\rel#1#2{\nbrel{\ifstrempty{#1}{}{\str{#1}}}{#2}}
\def\func#1#2{\nbfunc{\ifstrempty{#1}{}{\str{#1}}}{#2}}
\def\relfunc#1#2{R_{\mathbf{#1}^-}^{F#2}}
\def\F{{\mathcal F}}
\def\K{{\mathcal K}}
\def\Fraisse{Fra\"{\i}ss\' e}
\theoremstyle{definition}
\newtheorem{defn}{Definition}[section]
\newtheorem*{example}{Example}
\newtheorem*{remark}{Remark}
\theoremstyle{remark}
\theoremstyle{plain}
\newtheorem{thm}{Theorem}[section]
\newtheorem{corollary}[thm]{Corollary}
\newtheorem{prop}[thm]{Proposition}
\newtheorem{lem}[thm]{Lemma}
\begin{document}
\bibliographystyle{plain}

\title{Ramsey properties and extending partial automorphisms for classes of finite structures}

\author{David M. Evans\thanks{Department of Mathematics, Imperial College London, London SW7~2AZ, UK, \texttt{david.evans@imperial.ac.uk}},
	Jan Hubi\v cka\thanks{Supported  by  project  18-13685Y  of  the  Czech  Science Foundation (GA\v CR), Center for Foundations of Modern Computer Science (Charles University project UNCE/SCI/004), by the PRIMUS/17/SCI/3 project of Charles University and by ERC Synergy grant DYNASNET. Department of Applied Mathematics (KAM), Charles University, Malostransk\' e n\' am. 25, 11800 Praha, Czech Republic, 118 00 Praha 1, \texttt{hubicka@kam.mff.cuni.cz}},
        Jaroslav Ne\v set\v ril\thanks{Supported  ERC Synergy grant DYNASNET and by European Associated Laboratory (LEA STRUCO). Computer Science Institute of Charles University (IUUK), Charles University, Malostransk\' e n\' am. 25, 11800 Praha, Czech Republic, 118 00 Praha 1, \texttt{nesetril@iuuk.mff.cuni.cz}}}
\date{}
\maketitle

\begin{abstract}
We show that every free amalgamation class of finite structures with
relations and (set-valued) functions is a Ramsey class when enriched by a free linear ordering of vertices. This is a common strengthening of the Ne\v set\v ril-R\"odl Theorem and the second and third
authors' Ramsey theorem for finite models (that is, structures with both relations and
functions).
We also find subclasses with the ordering property.
For languages with relational symbols and unary functions we also show the extension property for partial automorphisms (EPPA)
of free amalgamation classes. These general results solve several conjectures and provide an easy Ramseyness test for many classes of structures.
\end{abstract}

\section{Introduction}
In this paper we discuss three related concepts ---  Ramsey classes, the
ordering (or lift or expansion) property and the extension property for partial
automorphisms (EPPA). The main novelty of our results is that they hold for  free amalgamation classes of finite structures
with both relations and \emph{set-valued functions}. This provides a useful tool for proving these types of results for some classes of structures which naturally carry a closure operation. We will explain below what we mean by this; examples and applications are given at the end of the paper.

As is well known, all three of these concepts about classes of finite structures are related to issues in topological dynamics and this relationship provides much of the motivation for what we do.  For example, by~\cite{Kechris2005} the automorphism group of the \Fraisse{} limit 
of a Ramsey class $\mathcal R$ is extremely amenable. Moreover, if the Ramsey class
$\mathcal R$ has the ordering property with respect to some amalgamation class $\mathcal K$, then it
determines the universal minimal flow of the automorphism group of the \Fraisse{} limit of ${\mathcal K}$. Thus, our results Theorem~\ref{thm:main} and Theorem~\ref{thm:ordering} about the Ramsey and ordering properties
give new examples of this correspondence.
  By~\cite{Kechris2007} and our 
Theorem~\ref{EPPA} about EPPA, the automorphism group of the \Fraisse{} limit of every free
amalgamation class $\K$ in a language where all functions are unary is amenable. However we note that the same conclusion, without the restriction that the (set-valued) functions are unary, also follows from our Ramsey Theorem~\ref{thm:main} and Proposition 9.3 of \cite{AKL14} (note that the assumption of the ordering property in the statement of Proposition 9.3 of \cite{AKL14} is not necessary, as was observed by Pawliuk and Soki{\'c} in~\cite{PawliukSokic16}). 

To generalise naturally the known results
about relational structures, we need to define carefully what we mean by a structure and  substructure,
because none of these results holds in the context of free amalgamation
classes with strong embeddings (as discussed in~\cite{Evans2}, see also the remarks preceding Theorem~\ref{thm:main}). 
In the Ramsey theory setting it is common to work with `incomplete' structures.  Thus we have to modify the standard model-theoretic notion of structures (see e.g.~\cite{Hodges1993}), where
functions are required to be total and thus complete in some sense.
 Before stating the main results, we give the basic (model-theoretic) setting of this paper.
We find it useful to introduce a variant
of the usual model-theoretic structures, allowing set-valued functions, which is well tailored both to the Ramsey and EPPA setting.

Let $L=L_{\mathcal R}\cup L_{\mathcal F}$ be a \emph{language} with \emph{relational symbols} $\rel{}{}\in L_{\mathcal R}$ and \emph{function symbols} $F\in L_{\mathcal F}$ each having its \emph{arity} denoted by $\arity{}$ for relations and $\arityf{}$ for functions.
Denote by $\powerset{A}$ the set of all subsets of $A$.
An \emph{$L$-structure} $\str{A}$ is a structure with {\em vertex set} $A$, functions $\func{A}{}\colon A^{\arityf{}}\to \powerset{A}$, $\func{}{}\in L_\mathcal F$ and relations $\rel{A}{}\subseteq A^{\arity{}}$, $\rel{}{}\in L_\mathcal R$.
Set-valued functions permits an explicit description of algebraic closures in the \Fraisse{} limits without changing the automorphism group which is necessary for some applications discussed in Section~\ref{sec:examples}.
Because the image of a tuple can be empty set it also gives a natural meaning to the notion of free amalgamation for structures in languages containing functions of arity greater or equal to 2 and it simplifies some of the notation below.

The language is usually fixed and understood from the context (and it is in most cases denoted by $L$).  If the set $A$ is finite we call $\str A$ a\emph{ finite structure}. We consider only structures with finitely or countably infinitely many vertices. 
If the language $L$ contains no function symbols, we call $L$ a {\em relational language} and say that an  $L$-structure is  a {\em relational $L$-structure}.
A function symbol $\func{}{}$ such that $\arityf{}=1$ is a {\em unary function}.

The notions of embedding, isomorphism, homomorphisms and free amalgamation classes
are natural generalisations of the corresponding notions on relational structures
and are formally introduced in Section~\ref{sec:background}. Considering function
symbols has important consequences for what we consider a substructure.
  An $L$-structure $\str{A}$  is a {\em substructure} of $\str{B}$ if $A\subseteq B$ and all relations and functions of $\str{B}$ restricted to $A$
are precisely relations and functions of $A$.
 In particular for every $F\in L$ and every tuple $(v_1,v_2,\ldots, v_{\arityf{}})$ of vertices
of $A$ it also holds that $\func{B}{}(v_1,v_2,\ldots, v_{\arityf{}})\subseteq A$.
This implies that $\str{B}$ does not induce a substructure on every subset of $\str{B}$ (but only on `closed' sets, to be defined later).

Building on these model theoretic notions we now outline the contents of this paper.
We proceed to the three main directions---Ramsey theory, the ordering property and the extension property for partial automorphisms (EPPA).
In each of these directions we now state the main result (stated below as Theorems~\ref{thm:main}, \ref{thm:ordering} and \ref{EPPA}). This can be summarised by saying that for free amalgamation classes we have strong positive theorems in each of these areas. There is more to it than just meets the eye: for the first time we demonstrate affinity of all these directions (for the ordering property and Ramsey this is known, but for EPPA much less so).

This has a number of applications to special classes of structures. In Section~\ref{sec:examples} we give several examples which have received recent attention: $k$-orientations, bowtie-free graphs and Steiner systems. These all are easy consequences of our main result. Finally, let us remark that our results further narrow the gap for the project of characterisation of Ramsey classes~\cite{Nevsetril2005}.
\subsection{Ramsey classes}

For structures $\str{A},\str{B}$ denote by ${\str{B}\choose \str{A}}$ the set of all sub-structures of $\str{B}$, which are isomorphic to $\str{A}$.  Using this notation the definition of a Ramsey class has the following form.
A class $\mathcal C$ is a \emph{Ramsey class} if for every two objects $\str{A}$ and $\str{B}$ in $\mathcal C$ and for every positive integer $k$ there exists a structure $\str{C}$ in $\mathcal C$ such that the following holds: For every partition ${\str{C}\choose \str{A}}$ into $k$ classes there exists a $\widetilde{\str B} \in {\str{C}\choose \str{B}}$ such that ${\widetilde{\str{B}}\choose \str{A}}$ belongs to one class of the partition.  It is usual to shorten the last part of the definition to $\str{C} \longrightarrow (\str{B})^{\str{A}}_k$.

We are motivated by the following, now classical, result.
\begin{thm}[Ne\v set\v ril-R\"odl Theorem~\cite{Nevsetvril1977,Nevsetvril1977b}]
\label{thm:NRoriginal}
Let $L$ be a relational language, $(\str{A},\leq)$ and $(\str{B},\leq)$ be ordered $L$-structures. Then there exists an ordered $L$-structure
$(\str{C},\leq)$ such that $(\str{C},\leq)\longrightarrow (\str{B},\leq)^{(\str{A},\leq)}_2$.

Moreover, if $\str{F}$ is an irreducible $L$-structure  (see Section~\ref{sec:Ramsey} for the definition of irreducibility) and $\str{A}$ and $\str{B}$ do not contain $\str{F}$ as a substructure, then $\str{C}$ may be chosen with the same property.
\end{thm}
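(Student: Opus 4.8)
The approach I would take is the \emph{partite construction} of Ne\v set\v ril and R\"odl, which reduces the statement to a single partite lemma and then builds $\str{C}$ by an iterated free amalgamation that yields both the arrow property and, for the second assertion, the sparsity needed to omit $\str{F}$. A harmless reduction comes first: if $\str{A}$ does not embed into $\str{B}$ then $\binom{\str{B}}{\str{A}}$ is empty and $\str{C}=\str{B}$ works, so we may fix an embedding and assume $\str{A}$ is a substructure of $\str{B}$. Write $b=|B|$. The linear order plays an essential structural role here: ordered structures are rigid, every embedding is order-preserving, and this lets us impose a canonical partition of the vertex set into ordered blocks so that the only copies we must control are the \emph{transversal} ones.

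First I would set up the partite framework. Call an ordered $L$-structure $\str{P}$ a $b$-partite system if its vertex set is divided into consecutive ordered blocks $X_1<\cdots<X_b$ with no relational tuple repeating a block; a copy of $\str{B}$ (respectively of $\str{A}$) is \emph{transversal} if it meets the relevant blocks in one vertex each, and transversality together with the order makes such a copy rigid. The base case is the \emph{Partite Lemma}: for every $b$-partite system $\str{B}^+$ there is a $b$-partite system $\str{C}^+$ such that every $2$-colouring of the transversal copies of $\str{A}$ in $\str{C}^+$ admits a transversal copy of $\str{B}^+$ whose transversal $\str{A}$-subcopies are monochromatic. I would prove this using the Hales--Jewett theorem, taking $\str{C}^+$ to be a suitable power of $\str{B}^+$ in which blocks become Cartesian powers of the blocks of $\str{B}^+$ and relations are inherited coordinatewise, so that combinatorial lines induce transversal copies of $\str{B}^+$ while points induce transversal copies of $\str{A}$; the key feature is that $\str{C}^+$ is assembled only from copies of $\str{B}^+$.

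With the lemma in hand, the construction produces $\str{C}$ as the last term of a finite chain $\str{P}^0,\str{P}^1,\dots,\str{P}^t$ of $b$-partite systems. The initial picture $\str{P}^0$ comes from the finite Ramsey theorem: fixing $N\longrightarrow(b)^{a}_2$, let $\str{P}^0$ be the $b$-partite structure whose transversal $\str{A}$- and $\str{B}$-copies are indexed by the increasing $a$- and $b$-subsets of $[N]$, so that a monochromatic $b$-subset is precisely a transversal copy of $\str{B}$ with monochromatic $\str{A}$-subcopies. Enumerate the transversal copies of $\str{A}$ in $\str{P}^0$ as $\str{A}_1,\dots,\str{A}_t$. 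Given $\str{P}^{i-1}$, I form $\str{P}^{i}$ by taking many disjoint copies of $\str{P}^{i-1}$ and freely amalgamating them along the copy of $\str{A}$ lying over $\str{A}_i$, choosing the number of copies so that the Partite Lemma, applied to this single family of $\str{A}$-copies, yields a copy of $\str{P}^{i-1}$ inside $\str{P}^{i}$ on which all copies of $\str{A}$ over $\str{A}_i$ receive one colour. After all $t$ steps every transversal $\str{A}$-copy has been made monochromatic on a nested copy of $\str{P}^0$, where the seeding Ramsey step delivers a monochromatic transversal copy of $\str{B}$; forgetting the block partition of $\str{P}^{t}$ gives $\str{C}$.

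The main obstacle, and the point at which the word \emph{irreducible} earns its keep, is to control the forbidden substructure while only ever governing transversal copies. The observations that make the transversal bookkeeping sufficient are that a transversal copy of $\str{B}$ contains only transversal copies of $\str{A}$ as subcopies (each block meets it once), and that the order makes every transversal copy rigid, so the finite Ramsey theorem indexes them cleanly by subsets; thus an arbitrary $2$-colouring of all copies of $\str{A}$ in $\str{C}$ restricts, on the copies that matter, to a colouring of transversal $\str{A}$-copies, and the monochromatic transversal copy of $\str{B}$ produced is a genuine copy of $\str{B}$. For the second assertion I would verify that a free amalgamation over a common closed substructure creates no new copy of an irreducible $\str{F}$: because no relational tuple of a free amalgam straddles its two factors, any embedding of $\str{F}$ into the amalgam must, by irreducibility, land entirely within one factor. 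Hence if $\str{A}$ and $\str{B}$ omit $\str{F}$ then so does $\str{P}^0$ (which is freely assembled from copies of $\str{B}$), the Partite Lemma preserves this (its output is built from copies of its input), and every amalgamation step preserves it, so $\str{C}$ omits $\str{F}$. The remaining work, delicate but routine, is to confirm that each $\str{P}^i$ lies in the class and that the free amalgamations never fuse two copies of $\str{A}$ into an unwanted new one.
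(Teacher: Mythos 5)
You have identified the right method: the paper itself does not prove Theorem~\ref{thm:NRoriginal} (it is cited from the literature), but the machinery it deploys for the generalisation Theorem~\ref{thm:main} is precisely the partite construction you outline --- a Hales--Jewett partite lemma followed by an iterated amalgamation of pictures, with the ``moreover'' clause handled by the fact that free amalgamation cannot create a new copy of an irreducible $\str{F}$. Two of your steps, however, fail as literally written. First, the iteration step: forming $\str{P}^{i}$ by ``taking many disjoint copies of $\str{P}^{i-1}$ and freely amalgamating them along the copy of $\str{A}$ lying over $\str{A}_i$'' produces a sunflower with one shared copy of $\str{A}$, and an adversary can colour the $\str{A}$-copies over $\str{A}_i$ inside each petal with both colours, so no copy of $\str{P}^{i-1}$ becomes good; no choice of the number of petals repairs this. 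The correct step (compare the proof of Theorem~\ref{thm:main}) is to let $\str{B}_i$ be the \emph{entire} substructure of $\str{P}^{i-1}$ lying over $\str{A}_i$ (in general much larger than a single copy of $\str{A}$), apply the Partite Lemma to $\str{B}_i$ as an $\str{A}_i$-partite system to obtain $\str{D}_i\longrightarrow(\str{B}_i)^{\str{A}}_2$, and only then extend each copy of $\str{B}_i$ inside $\str{D}_i$ to a copy of $\str{P}^{i-1}$ by free amalgamations disjoint outside $\str{D}_i$. The ``many disjoint copies'' are glued along the copies of $\str{B}_i$ in $\str{D}_i$, not along a copy of $\str{A}$.

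Second, your justification that the Partite Lemma preserves $\str{F}$-freeness --- that ``its output is built from copies of its input'' --- is not correct: the Hales--Jewett power is not a free amalgam of copies of $\str{B}^+$, and it contains transversal substructures (coordinatewise intersections of fibres) that lie in no single copy of $\str{B}^+$. The argument actually required, which the paper isolates as the ``Moreover'' clause of Lemma~\ref{partlem}, is that every irreducible subsystem of the power is transversal: if two of its vertices shared a part, then each coordinate projection, being a homomorphic image of an irreducible system and hence itself irreducible and transversal, would force those two vertices to agree in every coordinate and so coincide. A transversal irreducible subsystem then projects injectively and homomorphically into $\str{B}$, which is what rules out copies of $\str{F}$. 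With these two repairs --- and the harmless fix that your initial picture is $N$-partite rather than $b$-partite --- your outline becomes the standard proof.
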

In our setting this result may be reformulated as follows:
Given a language $L$, denote by $\vv{L}$ the  language $L$ extended by one
binary relation $\leq$.  Given an $L$-structure $\str{A}$, an {\em ordering of $\str{A}$}
is an $\vv{L}$-structure extending $\str{A}$
by an arbitrary linear ordering $\leq_\str{A}$ of the vertices. 
For brevity we denote such ordered $\str{A}$ as $\vv{\str{A}}$.
Given a class $\K$ of $L$-structures, denote by $\vv{\K}$ the class of
all orderings of structures in $\K$.
We sometimes say that $\vv{\K}$ arises by taking {\em free orderings} of structures in $\mathcal K$.
Note that minimal relational structures which do not belong to a free amalgamation class are all irreducible. Thus
Theorem~\ref{thm:NRoriginal} can now be re-formulated using basic notions of \Fraisse{} theory (which will be introduced in Section~\ref{sec:background})
as follows:
\begin{thm}[Ne\v set\v ril-R\"odl Theorem for free amalgamation classes]
\label{thm:NR}
Let $L$ be a relational language and $\K$ be a free amalgamation class of relational $L$-structures.  Then $\vv{\K}$ is a Ramsey class.
\end{thm}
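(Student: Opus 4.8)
The plan is to deduce the statement from the classical Ne\v set\v ril--R\"odl Theorem (Theorem~\ref{thm:NRoriginal}) after re-expressing the free amalgamation class $\K$ as a class defined by forbidden \emph{irreducible} substructures. First I would let $\F$ denote the set of isomorphism types of finite $L$-structures $\str{F}$ that do not lie in $\K$ but all of whose proper substructures do. Since every structure here is finite and $\K$ is hereditary, a finite $L$-structure belongs to $\K$ precisely when it contains no member of $\F$ as a substructure (if it is not in $\K$, a substructure of minimal cardinality that is not in $\K$ is a member of $\F$), so $\K=\mathrm{Forb}(\F)$. The key observation is that each $\str{F}\in\F$ is irreducible: were $\str{F}$ the free amalgam of two proper substructures $\str{F}_1,\str{F}_2$ over $\str{F}_1\cap\str{F}_2$, then minimality would give $\str{F}_1,\str{F}_2\in\K$, heredity would give $\str{F}_1\cap\str{F}_2\in\K$, and freeness of the amalgamation would force $\str{F}\in\K$, a contradiction.

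Next I would fix $\overrightarrow{\str{A}},\overrightarrow{\str{B}}\in\overrightarrow{\K}$ and apply Theorem~\ref{thm:NRoriginal} to the underlying ordered structures, obtaining an ordered $L$-structure $(\str{C},\leq)$ with $(\str{C},\leq)\longrightarrow (\str{B},\leq)^{(\str{A},\leq)}_2$. Its ``moreover'' clause guarantees that, for any fixed irreducible structure already omitted by $\str{A}$ and $\str{B}$, the structure $\str{C}$ can be taken to omit it as well. Since $\str{A},\str{B}\in\K=\mathrm{Forb}(\F)$ and every member of $\F$ is irreducible, I would arrange that $\str{C}$ omits the whole family $\F$. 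The reason the single-obstruction clause promotes to the entire (possibly infinite) family is structural: the $\str{C}$ produced by the Ne\v set\v ril--R\"odl construction underlying Theorem~\ref{thm:NRoriginal} is assembled from copies of $\str{B}$ by free amalgamation, and an irreducible structure cannot be split across a free amalgamation. Hence any copy of some $\str{F}\in\F$ inside $\str{C}$ would have to lie within a single copy of $\str{B}$, which is impossible as $\str{B}$ omits $\F$. Therefore $\str{C}\in\mathrm{Forb}(\F)=\K$, and since $\overrightarrow{\K}$ contains every linear ordering of every structure of $\K$, we have $(\str{C},\leq)\in\overrightarrow{\K}$.

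Finally I would pass from two colours to an arbitrary number $k$ by the standard induction on the number of colours: iterating the $k=2$ case just established yields, for each $k$, a structure $\overrightarrow{\str{C}}\in\overrightarrow{\K}$ with $\overrightarrow{\str{C}}\longrightarrow (\overrightarrow{\str{B}})^{\overrightarrow{\str{A}}}_k$, which is valid because $\overrightarrow{\K}$ is an amalgamation class and hence closed under the constructions used in the iteration. This shows that $\overrightarrow{\K}$ is a Ramsey class.

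I expect the main obstacle to be exactly the passage from the single forbidden irreducible structure named in Theorem~\ref{thm:NRoriginal} to the entire family $\F$, which need not be finite. Its justification rests solely on the observation that irreducible structures survive intact under free amalgamation and so cannot be created in the amalgamation steps of the construction; everything else---the reduction $\K=\mathrm{Forb}(\F)$ with $\F$ irreducible, and the promotion of the $2$-colour arrow to $k$ colours---is routine by comparison.
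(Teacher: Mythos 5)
Your proposal is correct and matches the paper's intended argument: the paper treats Theorem~\ref{thm:NR} as an immediate reformulation of Theorem~\ref{thm:NRoriginal}, justified by exactly your observation that the minimal structures not in a free amalgamation class $\K$ are irreducible, so $\K=\mathrm{Forb}(\F)$ with $\F$ irreducible and the ``moreover'' clause applies. You supply more detail than the paper does on the two points it glosses over (passing from one forbidden irreducible structure to the whole family $\F$, and from $2$ colours to $k$), and both of your justifications are sound.
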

The more recent connection between Ramsey classes and extremely amenable
groups~\cite{Kechris2005} has motivated a systematic search for new examples of Ramsey classes. It
became apparent that it is important to consider structures with both relations
and functions or, equivalently, classes of structures with ``strong embeddings''.
  This led to \cite{Hubicka2016} which provides a sufficient structural
condition for a subclass of a Ramsey class of structures to be Ramsey and also 
generalises this approach to classes of structures (containing relations and functions) with formally-described closures. 
It is however clear
that considering classes with closures (especially non-unary closures) leads to many technical difficulties. In fact,
a recent example given by first author based on Hrushovski's predimension
construction~\cite{Evans2} not only answers one of the main questions in the area (about the existence of precompact Ramsey expansions), but also shows that there is no  direct generalisation of the Ne\v set\v ril-R\"odl Theorem
to a free amalgamation classes with strong embeddings.
However, perhaps surprisingly, we show that if closures can be explicitly represented by means
of set-valued functions such that strong embeddings are just embeddings, such a statement is true. We prove:
\begin{thm}
\label{thm:main}
Let $L$ be a language (involving relational symbols and set-valued functions) and let 
$\K$ be a free amalgamation class of $L$-structures. 
 Then $\vv{\K}$ is a Ramsey class.
\end{thm}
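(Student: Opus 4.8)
The plan is to reduce Theorem~\ref{thm:main} to the relational Ne\v set\v ril--R\"odl Theorem~\ref{thm:NR} by encoding the partial functions as relations, and to run the reduction through the partite construction that underlies~\cite{Nevsetvril1977,Nevsetvril1977b} rather than through a black-box application of the statement. The one genuinely new phenomenon is the interaction of the partial functions with the Ramsey machinery: because a substructure of an $L$-structure must be \emph{closed}, the copies of $\str{B}$ we are allowed to find are not arbitrary subsets, and---as explained below---functionality of the encoded relations is not a condition that free amalgamation respects. Throughout, the linear orders are carried along exactly as in the ordered form of Theorem~\ref{thm:NR}.

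First I would fix a relational language $L^{\ast}$: keep every relational symbol of $L$, and replace each $\func{}{}\in L_{\mathcal F}$ by a relation $D_{F}$ of arity $d(\func{}{})$ recording the domain of $\func{}{}$ together with a relation of arity $d(\func{}{})+r(\func{}{})$ recording its graph (using the vertex order to list the unordered range). Each $\str{A}\in\overrightarrow{\K}$ then has an encoding $\str{A}^{\ast}$, and I would define the relational class to be all finite ordered $L^{\ast}$-structures isomorphic to an arbitrary-subset (i.e.\ purely relational) substructure of some $\str{A}^{\ast}$, $\str{A}\in\K$. The role of the domain markers $D_{F}$ is precisely to detect closure: a subset $S$ of $\str{A}$ is closed exactly when no tuple of $D_{F}$ lying in $S$ has part of its image outside $S$, so that a purely relational copy of $\str{B}^{\ast}$ carried by a closed set is the same thing as a substructure isomorphic to $\str{B}$.

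Here lies the obstruction that forces the reduction to go through the construction rather than through the statement of Theorem~\ref{thm:NR}. The encoded class is \textbf{not} a free amalgamation class of relational structures: a \emph{functionality violation}, consisting of two graph-tuples sharing their domain part but with distinct, otherwise vertex-disjoint, images, is the free amalgam of its two single-tuple substructures over the common domain part, hence reducible; so it cannot be excluded by forbidding irreducible structures, and any relational free amalgamation class containing the encodings must already contain non-functional objects. Consequently I would not apply Theorem~\ref{thm:NR} to the encoding directly. Instead I would re-run its proof---the Partite Lemma followed by the iterated partite amalgamation---inside $\K$ itself. Every amalgamation step is then a \emph{free} amalgamation of $L$-structures, which automatically stays in $\K$ and preserves both functionality and closure: tuples straddling two factors are placed in no domain, so no image escapes and no two images are ever created for one domain tuple. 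The only essentially relational ingredient, the Partite Lemma for the base multipartite picture, I would either quote from~\cite{Nevsetvril1977,Nevsetvril1977b} or reprove, arranging the partition so that each domain tuple and its image sit in one copy; this guarantees that the monochromatic object extracted at the end is a \emph{closed} copy of $\str{B}$, i.e.\ a genuine substructure in $\overrightarrow{\K}$. Equivalently, one may package this as a completion lemma for free amalgamation classes in the style of~\cite{Hubicka2016}: the partite construction produces a relational object that is ``locally in $\K$'', and free amalgamation lets one complete it to a member of $\K$.

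The main obstacle is exactly this reconciliation of partial functions with the construction, and it splits into two points to be checked carefully. The first is the completion/closure step: one must show that supplying the missing images (or, in the partite language, amalgamating freely at each stage) never creates a forbidden substructure. This is where the hypothesis of \emph{free} amalgamation is indispensable rather than cosmetic---every forbidden structure of $\K$ is irreducible, while the vertices and graph-tuples added in a free completion attach in a tree-like way, each new image meeting the old part through a single graph-relation, so no irreducible forbidden configuration can span the old and new parts. The second is the faithful transfer of colourings: a $k$-colouring of ${\str{C}\choose\str{A}}$ must be pulled back to a colouring of closed copies of $\str{A}^{\ast}$ so that the relational Ramsey property delivers a monochromatic closed copy of $\str{B}^{\ast}$, whose closure is the required $\widetilde{\str{B}}\in{\str{C}\choose\str{B}}$ with ${\widetilde{\str{B}}\choose\str{A}}$ monochromatic. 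I expect the handling of the free order and of the symmetric (unordered) ranges to be routine bookkeeping, with all the real content concentrated in the Partite Lemma with functions and in the completion lemma just described.
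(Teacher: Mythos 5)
Your overall strategy coincides with the paper's: run the partite construction inside $\K$ itself, glue the pictures by free amalgamation, and exploit the fact that the minimal obstructions to membership in a free amalgamation class are irreducible, so that free amalgamation never creates one. Your observation that the relational encoding of $\K$ is not a free amalgamation class --- because a functionality violation is itself a free amalgam of its two single-tuple substructures, hence reducible --- is correct and is exactly why a black-box application of Theorem~\ref{thm:NR} fails. The one structural difference is the base object $\str{C}_0$: the paper does not bootstrap from the relational Ne\v set\v ril--R\"odl theorem but from Theorem~\ref{thm:models} (Ramsey for ordered structures with \emph{total} functions), after a completion trick that sends each undefined tuple to its own minimal vertex. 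Since the completed functions map every tuple into itself, every subset is closed, so relational copies and genuine (closed) copies coincide, and none of the bookkeeping your encoding forces is needed. Your version of that step can probably be repaired by restricting attention to closed copies and extending colourings arbitrarily, but as written the assertion that ``the relational Ramsey property delivers a monochromatic closed copy of $\str{B}^{\ast}$'' is not something the relational theorem gives you: it delivers a monochromatic copy among \emph{all} relational copies, with no control on closedness.

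The genuine gap is in the Partite Lemma. You propose to ``quote or reprove'' it, and your argument that nothing forbidden is created is confined to the free-amalgamation steps (the new vertices ``attach in a tree-like way''). But the Partite Lemma itself is realised by a Hales--Jewett power construction, which is \emph{not} a free amalgamation, and it is precisely there that a forbidden irreducible configuration could a priori appear. The paper's Lemma~\ref{partlem} therefore carries a new ``Moreover'' clause: if every irreducible subsystem of $\str{B}$ is transversal, the same can be guaranteed for the power object $\str{C}$. (Proof idea: a non-transversal irreducible subsystem of $\str{C}$ projects in each Hales--Jewett coordinate to an irreducible, hence transversal, subsystem of $\str{B}$, forcing two vertices lying in a common part to agree in every coordinate and hence to be equal.) This clause is what sustains the inductive invariant that every irreducible subsystem of every picture is transversal and isomorphic to a substructure of $\str{B}$, and that invariant is what finally places $\str{C}=\str{P}_b$ in $\overrightarrow{\K}$. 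Your plan contains neither this statement nor its proof, and the purely relational Partite Lemma does not supply it; without it, the claim that the partite construction ``produces a relational object that is locally in $\K$'' is unsubstantiated.
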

This yields an alternative proof of the Ramsey property for some recently-discovered Ramsey classes (such as ordered partial Steiner systems~\cite{bhat2016ramsey}, bowtie-free graphs~\cite{Hubivcka2014}, bouquet-free graphs~\cite{Cherlin2007}) and also for new classes: most importantly a Ramsey expansion
of the class of $2$-orientations of a Hrushovski predimension construction which is elaborated in~\cite{Evans2} and which was one of the main
motivations for this paper.

\subsection{Ordering property}
\label{sec:orderingpropintro}

A class $\mathcal O\subseteq \vv{\K}$ has the {\em ordering property (with respect to $\K$)} if
for every $\str{A}\in \K$ there exists $\str{B}\in \K$ such that every ordering $\vv{B}\in \mathcal O$
of $\str{B}$ contains a copy of every ordering $\vv{A}\in \mathcal O$ of $\str{A}$.
It is well known that for every free amalgamation class $\K$ of relational
structures the class $\vv{\K}$ has ordering property.  This fact
follows by an application of Theorem~\ref{thm:NR} but can also be shown by
more general methods based on hypergraphs of large girth~\cite{Nesetril1975,Nesetvril1978}.
This shows that there are many classes $\K$ of relational structures for which
$\vv{\K}$ has the ordering property (with respect to $\K$) but $\K$ itself is not a Ramsey class.

For languages containing function symbols, the situation is more complicated. To see that some extra restriction on our class $\mathcal O$ is required, we note the following example. Denote by $\mathcal T$ the class of all
finite forests represented by a single unary function $\func{}{}$ connecting a vertex to its
father. Let $\str{A}$ be  a structure containing two vertices $a,b$ and $\func{A}{}(a)=\{b\}$.
A vertex $c$ is a {\em root} if $\func{A}{}(c)=\emptyset$.  Any structure $\str{B}$ can
be ordered in increasing order according to the distance from a root vertex.  It follows
that such an ordering never contains the ordering of $\str{A}$ given by $a\leq_\str{A} b$
and consequently $\vv{\mathcal T}$ does not have the ordering property.

 Nevertheless, we show the following:
\begin{thm}
\label{thm:ordering}
Let $L$ be a language (involving relational symbols and set-valued functions) and let $\K$ be a free amalgamation class of $L$-structures. 
Then there exists an explicitly-defined amalgamation class $\mathcal O \subseteq \vv{\K}$
of {\em admissible orderings} such that:
\begin{enumerate}
 \item every $\str{A}\in \K$ has an ordering in $\mathcal O$;
 \item $\mathcal O$ is a Ramsey class; and,
 \item $\mathcal O$ has the ordering property (with respect to $\mathcal K$).
\end{enumerate}
\end{thm}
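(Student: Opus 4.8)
The plan is to cut $\overrightarrow{\K}$ down to those orderings that are forced to agree with the way vertices are generated by the partial functions, while leaving the order free everywhere else. For $\str{A}\in\K$ I read off from the functions a \emph{generation digraph}: put $u\prec_{0}v$ whenever $v$ lies in the range $\func{A}{}(\vec{t})$ of some defined value of a function symbol $\func{}{}$ whose argument tuple $\vec{t}$ contains $u$. Let the \emph{clusters} be the strongly connected components of $\prec_{0}$ (the mutually generating vertices); since $\str{A}$ is finite these are finite, and the condensation of $\prec_{0}$ is a partial order $\prec$ on the set of clusters. Call an ordering $\overrightarrow{\str{A}}$ \emph{admissible} if $u\leq v$ holds whenever the cluster of $u$ is $\prec$-below the cluster of $v$; thus the order between $\prec$-comparable clusters is dictated, while inside a cluster and between $\prec$-incomparable clusters it is free. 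Let $\mathcal{O}$ be the class of all admissible orderings of members of $\K$. In the forest example $\prec_{0}$ already has no cycles and forces every vertex before its father, which is exactly the constraint excluded from the distance-from-root ordering responsible for the failure of $\overrightarrow{\mathcal{T}}$.

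Next I verify (1) and that $\mathcal{O}$ is an amalgamation class. For (1): $\prec$ is a partial order on clusters, so topologically sorting the clusters and ordering each cluster arbitrarily produces an admissible ordering; hence every $\str{A}\in\K$ has an ordering in $\mathcal{O}$. Heredity holds because substructures are closed, so for $\str{A}\subseteq\str{B}$ the generation digraph of $\str{A}$ is the restriction of that of $\str{B}$; consequently $\prec$ on $\str{A}$ introduces no comparabilities absent in $\str{B}$, and the restriction of an admissible order is admissible. For amalgamation, given admissible $\overrightarrow{\str{B_1}},\overrightarrow{\str{B_2}}$ with common admissible $\overrightarrow{\str{A}}$, I take the free amalgam $\str{C}$ in $\K$: since free amalgamation adds no function values beyond those inherited from $\str{B_1}$ and $\str{B_2}$, no $\prec_{0}$-edge joins $B_1\setminus A$ to $B_2\setminus A$, so every cluster of $\str{C}$ lies within $\str{B_1}$ or within $\str{B_2}$ and $\prec_{\str{C}}$ is the union of the two cluster orders. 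As $\leq_{\str{B_1}}$ and $\leq_{\str{B_2}}$ agree on $\str{A}$ and each respects its cluster order, their union together with $\prec_{\str{C}}$ is acyclic, and any linear extension of it is an admissible amalgam.

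For (2) I would derive the Ramsey property of $\mathcal{O}$ from Theorem~\ref{thm:main} by carrying the admissibility constraint through the argument that proves it, rather than quoting it as a black box: $\mathcal{O}$ is a proper subclass of $\overrightarrow{\K}$, so Ramseyness is not automatically inherited. The key point is that admissibility only pins the order down on $\prec$-comparable pairs, and these are determined by the function symbols and hence preserved by every embedding; the cluster computation in the amalgamation step shows the constraint is compatible with free amalgamation. Thus the partition and amalgamation steps underlying Theorem~\ref{thm:main} respect admissibility, and the Ramsey witness they produce for $\overrightarrow{\K}$ can be chosen inside $\mathcal{O}$.

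The main obstacle is (3). I fix $\str{A}\in\K$, enumerate its finitely many admissible orderings $\overrightarrow{\str{A}}_{1},\dots,\overrightarrow{\str{A}}_{m}$, and seek a single $\str{B}\in\K$ every admissible ordering of which contains each $\overrightarrow{\str{A}}_{i}$. I follow the relational template---the derivation of the ordering property from the Ne\v set\v ril--R\"odl theorem mentioned in the introduction---now driven by the Ramsey property of $\mathcal{O}$ from the previous step: inside a large admissibly ordered structure I force a copy of a suitable $\str{B}$ whose induced ordering realises every $\overrightarrow{\str{A}}_{i}$. The genuinely new difficulty is exactly the phenomenon behind the $\overrightarrow{\mathcal{T}}$ counterexample: closures can force order relations and so forbid some $\overrightarrow{\str{A}}_{i}$ from ever appearing. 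This is what the cluster-based definition is designed to prevent, since it keeps the order free on $\prec$-incomparable vertices; the heart of the proof, and the step I expect to be hardest, is to check that enough of this freedom survives the construction that no target ordering of $\str{A}$ is excluded.
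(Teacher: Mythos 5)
There is a genuine gap, and it is located exactly where you predicted: part (3) fails for the class $\mathcal O$ you define. Your admissible orderings only constrain the order along the generation digraph $\prec$, leaving the order between $\prec$-incomparable vertices completely free; but then an adversary can still use the closure structure to discriminate between such vertices, exactly as in the paper's Proposition~\ref{prop:freeorder}. Concretely, let $L$ have one partial unary function $\func{}{}$ (with $r(\func{}{})=1$) and one unary relation $\rel{}{}$, and let $\K$ be the free amalgamation class of finite forests with a unary predicate. Take $\str{A}$ with vertices $a,b,c$, $\func{A}{}(b)=\func{A}{}(c)=\{a\}$ and $\rel{A}{}=\{b\}$. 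Your clusters are the singletons, with $b\prec a$ and $c\prec a$ and $b,c$ incomparable, so both $b\leq c\leq a$ and $c\leq b\leq a$ are admissible, and they are non-isomorphic ordered structures. Now for any $\str{B}\in\K$, extend to a linear order the acyclic constraint system consisting of $\prec_{\str{B}}$ together with ``$u\leq v$ whenever $\func{B}{}(u)=\func{B}{}(v)$, $u\in\rel{B}{}$, $v\notin\rel{B}{}$'' (acyclic because the first kind of edge strictly decreases distance to the root and the second kind only goes red-to-non-red among siblings). This is an admissible ordering of $\str{B}$ in your sense that contains no copy of $c\leq b\leq a$, so no $\str{B}$ witnesses the ordering property. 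The point is that $\cl_\str{A}(b)$ and $\cl_\str{A}(c)$ are non-isomorphic, and the ordering property forces the admissible class to \emph{dictate} the relative order of any two vertices whose closures are non-isomorphic (and, more subtly, of vertices in non-homologous but isomorphic closure-extensions, and even within a single closure-extension up to its automorphisms over the lower levels).

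This is precisely what the paper's Definition~\ref{def:admissible} builds in and your definition omits: condition \ref{o:rootorder} makes every admissible order refine the preorder $\preccurlyeq$, which orders vertices first by a fixed well-ordering $\trianglelefteq$ of the isomorphism types of their closures and then lexicographically, and condition \ref{o:unique} rigidifies the order inside each closure-extension so that it is determined (up to $\Aut(\str{X}/X^\circ)$) by its similarity type. The ordering-property proof then proceeds by an induction on levels, using Lemma~\ref{lem:type} (a Ramsey argument forcing isomorphic closures of vertices to be ordered uniformly) and Lemma~\ref{lem:homologous} (forcing the relative order of homologous components to be preserved), with \ref{o:unique} supplying the rigidity needed in the inductive step. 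Two further, smaller issues: your part (2) is only asserted --- the paper handles it via the extension condition \ref{o:extend2}, which lets one admissibly re-order the Ramsey witness of Theorem~\ref{thm:main} while keeping the order on every copy of $\ostr{B}$, and the existence of a class satisfying all six conditions is itself a nontrivial greedy construction (Proposition~\ref{prop:ordexists}); and for non-unary functions your strongly-connected clusters need not coincide with the paper's closure-components (cf.\ the example with $\func{A}{1}(c)=\{a,b\}$, $\func{A}{2}(a,b)=\{c\}$, where $a,b,c$ form one cluster for you but three closure-components of different levels for the paper), which also threatens your heredity argument, since a $\prec_0$-edge of $\str{B}$ witnessed by a tuple not contained in $A$ disappears in the substructure and can split a cluster into comparable pieces.
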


The details of the admissible orderings are technical and are described in full in Definition~\ref{def:admissible}. The existence of a subclass $\mathcal O \subseteq \vv{\K}$ with the above three properties follows directly from Theorem 10.7 of~\cite{Kechris2005} and Theorem~\ref{thm:main}. The novelty in Theorem~\ref{thm:ordering} is the explicit description of a class $\mathcal O$. The proof of Theorem~\ref{thm:ordering} is a combination of the Ramsey methods used to show the ordering property of classes of
relational structures and the methods used to show the ordering property of classes with unary
functions.
\subsection{Extension property for partial automorphisms -- EPPA}
A {\em partial automorphism} of an $L$-structure $\str{A}$ is an isomorphism $f : \str{D}
\to \str{E}$ of substructures $\str{D}, \str{E}$ of $\str{A}$.  We
say that a class of finite $L$-structures $\K$  has the {\em extension property for
partial automorphisms}  ({\em EPPA}, sometimes called the {\em Hrushovski extension
property}) if whenever $\str{A} \in \K$ there is $\str{B} \in \K$ such that
$\str{A}$ is substructure of $\str{B}$ and every partial automorphism
of $\str{A}$ extends to an automorphism of $\str{B}$, see~\cite{hrushovski1992,Herwig1995,herwig2000,hodkinson2003,Solecki2005,vershik2008}.  In the following we will
simply call $\str{B}$  with the property above an {\em EPPA-witness} of
$\str{A}$.

For relational languages, the extension property for partial automorphisms of free amalgamation classes
can be derived from the following strengthening of the extension property for partial automorphisms:
\begin{thm}[Hodkinson-Otto \cite{hodkinson2003}]
\label{thm:otto-hodkinson}
Let $L$ be a relational language, then for every finite  $L$-structure $\str{A}$
there exists a finite and clique faithful EPPA-witness $\str{B}$.
\end{thm}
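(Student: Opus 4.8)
The statement asks for more than bare EPPA: the extension $\str B$ must be \emph{clique faithful}, i.e. every subset of $B$ inducing an irreducible substructure of $\str B$ --- a \emph{clique} of the Gaifman graph, a set all of whose pairs occur together in some relational tuple --- is isomorphic to a substructure of $\str A$. The plan is to build $\str B$ equipped with a surjective homomorphic projection $\pi\colon \str B\to\str A$ (carrying each tuple of a relation $\rel{B}{}$ to a tuple of the same $\rel{A}{}$) whose fibres $\pi^{-1}(a)$ are \emph{independent}: no two of their vertices lie together in any relation. Then clique faithfulness comes essentially for free: if $X\subseteq B$ is a clique, then any two vertices of $X$ with equal image would lie in one fibre and hence in no common tuple, contradicting adjacency; so $\pi{\restriction}X$ is injective, and --- once the valuation below is coherent over $X$ --- it is an isomorphism of $\str B{\restriction}X$ onto the substructure of $\str A$ induced on $\pi(X)$. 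The whole problem thus reduces to producing a finite homomorphic preimage with independent fibres in which every partial automorphism of $\str A$ becomes total.

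For the construction I would adapt Hrushovski's method for graphs~\cite{hrushovski1992} to arbitrary arity. Let the vertex set of $\str B$ be $A\times\Gamma$ for a finite group $\Gamma$ to be chosen, with $\pi(a,g)=a$. Associate to each edge of the Gaifman graph of $\str A$ a free generator, let $F$ be the free group on these generators, and define the relations of $\str B$ by a \emph{valuation} condition: a tuple $\big((a_1,g_1),\dots,(a_r,g_r)\big)$ lies in $\rel{B}{}$ precisely when $(a_1,\dots,a_r)\in\rel{A}{}$ and the coordinates $g_1,\dots,g_r$ are compatible, in the image of $F$ in $\Gamma$, with the edge-labels joining the $a_i$, the condition being arranged so that two vertices sharing a first coordinate never occur in a common tuple. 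The first clause makes $\pi$ a homomorphism and the last makes the fibres independent. Translating the $\Gamma$-coordinate realises $\Gamma$ as a group of automorphisms of $\str B$, and each partial automorphism $p$ of $\str A$ lifts to a map $(a,g)\mapsto(p(a),\sigma_p(g))$ for a permutation $\sigma_p$ of $\Gamma$ read off from the valuation, extending $p$ to a total automorphism of $\str B$.

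The step I expect to be the main obstacle is the choice of $\Gamma$: one must find a single finite quotient of $F$ in which the valuation constraints coming from $\str A$ \emph{and} from every partial automorphism to be extended are simultaneously satisfiable and mutually consistent, without accidentally identifying distinct constraints --- such a collapse would force spurious tuples into the relations, create new cliques and destroy clique faithfulness. This is exactly where residual finiteness of free groups, equivalently the profinite-topology argument of Herwig and Lascar~\cite{herwig2000}, is decisive: only finitely many words of $F$ are in play, so they can be separated in a suitable finite quotient $\Gamma$, giving a finite $\str B$. With $\Gamma$ fixed, verifying that each $\sigma_p$ is a well-defined relation-preserving bijection and that no unintended tuples appear is routine but delicate bookkeeping that I would isolate in lemmas.
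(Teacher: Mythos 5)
The paper does not actually prove this theorem --- it cites it from Hodkinson--Otto and then, in Section~4, proves its own Theorem~\ref{EPPA} by a variant of the same argument; measuring your proposal against that argument reveals a genuine gap. You secure (at best) EPPA, but not clique faithfulness, and you misidentify where the difficulty lies. First, your restatement of faithfulness is weaker than the actual definition: one needs an automorphism $g$ of $\str{B}$ with $g(C)\subseteq A$ for every Gaifman clique $C$, not merely that $C$ be abstractly isomorphic to a substructure of $\str{A}$. In your construction $B=A\times\Gamma$, with $\str{A}$ sitting inside as $A\times\{e\}$, the only automorphisms you exhibit are the $\Gamma$-translations and the lifts $\sigma_p$; a clique whose vertices carry \emph{different} $\Gamma$-coordinates cannot be moved into $A\times\{e\}$ by a single translation, and nothing in the proposal produces any other automorphism doing the job. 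This is precisely why the Hrushovski and Herwig--Lascar constructions fail to be faithful in general and why Hodkinson and Otto had to introduce a new device. (A smaller issue: for relations of arity at least $3$, injectivity of $\pi$ on a Gaifman clique plus $\pi$ being a homomorphism gives only a relation-preserving injection into $\str{A}$, not an isomorphism onto an induced substructure; your parenthetical ``once the valuation below is coherent over $X$'' is exactly the point requiring proof.)

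The missing idea is the big/small dichotomy and the pigeonhole argument through valuation functions. One starts from an arbitrary EPPA-extension $\str{B}^-$ of $\str{A}$ (Herwig's theorem), calls a set $S\subseteq B^-$ \emph{big} if no automorphism of $\str{B}^-$ maps it into $A$, and replaces each vertex $b$ by pairs $(b,\chi_b)$ where $\chi_b$ assigns to each big set $S\ni b$ a value in $\{1,\dots,\lvert S\rvert-1\}$; relations are imposed only on \emph{generic} tuples, those whose valuations disagree on every common big set. A Gaifman clique is then pairwise generic, so if its projection $S$ were big the values $\chi_u(S)$ for $u\in S$ would be $\lvert S\rvert$ distinct elements of a set of size $\lvert S\rvert-1$, which is impossible; hence every clique projects to a small set and is carried into $A$ by lifting the witnessing automorphism of $\str{B}^-$. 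Your residual-finiteness step addresses a different problem (consistency of the finite quotient, which is what matters for Herwig--Lascar-type EPPA with forbidden configurations) and contributes nothing toward faithfulness. Without the valuation/pigeonhole mechanism, or an equivalent, the proposal does not prove the stated theorem.
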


A {\em clique faithful EPPA-witness}  $\str{B}$ is an EPPA-witness of $\str{A}$ with the additional
property that for every clique $\str{C}$ in the Gaifman graph of $\str{B}$ there exists an 
automorphism $g$ of $\str{B}$ such that $g(C)\subseteq A$. 
(For a relational $L$-structure $\str{A}$ the \emph{Gaifman graph}  is the graph $\str{G}_\str{A}$ with
vertices $A$ and  edges those pairs of vertices contained in a tuple of a relation of
$\str{A}$: $\str{G}_\str{A}=(V,E)$ where $\{x,y\}\in E$ if and only
if $x\neq y$ and there exists tuple $\vv{t}\in \rel{A}{},\rel{}{}\in L$ such that
$x,y\in \vv{t}$.)

It is a well known fact
that free amalgamation classes can be equivalently described by forbidden embeddings
from a family of structures whose Gaifman graph is a clique and consequently Theorem~\ref{thm:otto-hodkinson}
implies that every free amalgamation class of relational structures has EPPA (see, for example, \cite{Siniora}).

The notion of {\em irreducibility}  of a structure (given in Definition~\ref{def:irreducible}) is a natural generalisation to the context of functional languages of
the above notion of a clique in a graph.
We say that an EPPA-witness $\str{B}$ of $\str{A}$ is {\em irreducible substructure faithful} if
for every irreducible substructure $\str{C}$ of $\str{B}$ there exists an 
automorphism $g$ of $\str{B}$ such that $g(C)\subseteq A$.

Theorem~\ref{thm:otto-hodkinson} was further strengthened by Siniora and Solecki in
the following form.

\begin{thm}[Siniora-Solecki \cite{Siniora}]
\label{thm:siniora-solecki}
Let $L$ be relational language. Then for every finite relational $L$-structure $\str{A}$
there exists a finite clique faithful and coherent EPPA-witness $\str{B}$. 
\end{thm}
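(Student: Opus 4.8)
The plan is to upgrade Theorem~\ref{thm:otto-hodkinson} rather than to reprove it from scratch: that result already supplies a finite clique faithful EPPA-extension, so the only genuinely new content is \emph{coherence}. Recall that a system of extensions, i.e.\ a map $p\mapsto \hat p$ sending each partial automorphism $p$ of $\str A$ to an automorphism $\hat p$ of $\str B$ with $\hat p\restriction \dom(p)=p$, is \emph{coherent} if $\widehat{q\circ p}=\hat q\circ\hat p$ whenever $p,q$ and $q\circ p$ are all partial automorphisms of $\str A$. The key observation is that coherence is automatic once the extension map factors through a group: if there is a finite group $G$ acting on $\str B$ by automorphisms and a map $p\mapsto g_p\in G$ which is multiplicative on valid composites (so $g_{q\circ p}=g_q g_p$) and for which $g_p$ acts on $\str B$ as an automorphism extending $p$, then setting $\hat p$ to be the action of $g_p$ gives a coherent system, because $G\to\Aut(\str B)$ is a homomorphism. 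So I would reduce the whole theorem to producing such a pair $(G,\str B)$.

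To set this up, let $\mathcal I$ be the set of all partial automorphisms of $\str A$; as $\str A$ is finite this is a finite inverse monoid under composition. Form the free group on the symbols $\{g_p:p\in\mathcal I\}$ and impose the relations $g_{q\circ p}=g_q g_p$ for every valid composite, together with $g_{p^{-1}}=g_p^{-1}$ and $g_{\mathrm{id}}=1$; this yields a group $\Gamma$ equipped with a canonical composite-preserving map $\mathcal I\to\Gamma$. The partial automorphisms of $\str A$ give a partial action of $\Gamma$ on $A$, and what I want is to \emph{globalise} this partial action to a genuine action of a \emph{finite} quotient $G$ of $\Gamma$ on a finite superstructure $\str B\supseteq\str A$, in such a way that the generator images still act as automorphisms extending the corresponding partial automorphisms. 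This globalisation is exactly the kind of statement proved by the Herwig--Lascar / Ribes--Zalesskii circle of ideas, and for arbitrary finite relational structures (with no forbidden configurations to respect) the finiteness of $G$ can be arranged.

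For the finite model $\str B$ itself I would use the Hodkinson--Otto construction in its fibered form: take $B$ to be the $G$-orbit of a copy of $A$ (concretely $B\subseteq A\times G$), define each relation fibrewise over the relations of $\str A$, and use a $G$-valued colouring to cancel the spurious tuples that iterating a partial automorphism would otherwise create. Because the relations of $\str B$ are pulled back from $\str A$ along the projection to the $A$-coordinate, every clique of the Gaifman graph of $\str B$ lies inside a single $G$-translate of $A$ and is therefore carried into $A$ by an automorphism; this is precisely clique faithfulness. Coherence then falls out of the first paragraph, since the extending automorphisms were chosen to be the images of a multiplicative map into $G$.

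The hard part is the tension between finiteness and coherence. Globalising a \emph{single} partial automorphism to a finite group action is comparatively easy, but coherence forces us to globalise the whole monoid $\mathcal I$ at once while preserving \emph{all} of its composition relations, and to do so inside a \emph{finite} quotient $G$ of $\Gamma$ that neither collapses the action nor destroys clique faithfulness. Equivalently, one must show that the relevant finite subset of $\Gamma$ is separated by finite-index normal subgroups compatible with the fibered construction --- the residual-finiteness/profinite-topology input. I expect this simultaneous-globalisation step, rather than the construction of $\str B$ or the verification of clique faithfulness, to be where essentially all of the work of Siniora and Solecki is concentrated.
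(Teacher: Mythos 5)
The paper does not prove this statement; it imports it verbatim from \cite{Siniora}, so the only thing in the paper to compare against is the proof of Theorem~\ref{EPPA}, which adapts the same Hodkinson--Otto/Siniora--Solecki machinery. Measured against that, your proposal has a genuine gap: everything is deferred to a ``simultaneous globalisation'' of the inverse monoid of partial automorphisms inside a finite group, which you explicitly do not carry out, and which you locate in the Herwig--Lascar/Ribes--Zalesskii circle of ideas. For the class of \emph{all} finite relational structures (no forbidden configurations) that machinery is not where the work lies. The known proof takes a plain (coherent) EPPA-extension from Herwig's theorem, builds the clique faithful extension on top of it using valuation functions $\chi:\mathcal U\to\mathbb N$ indexed by the big sets, and obtains coherence by a purely local device: each extending automorphism $\hat p$ is determined by a family of permutations $\theta_S^p$ of $\{0,\dots,\lvert S\rvert-1\}$, one for each big set $S$, and one arranges $\theta_S^g\theta_S^f=\theta_S^h$ on coherent triples by always completing a partial permutation to a full one in the canonical order-preserving way. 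This is exactly the mechanism used in the proof of Theorem~\ref{EPPA} in this paper. No profinite separability enters.

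Your proposed reduction also overshoots the definition of coherence in a way that matters. You ask for a map $p\mapsto g_p$ into a group with $g_{q\circ p}=g_qg_p$ \emph{whenever $q\circ p$ is a partial automorphism}, i.e.\ for all composites. Coherence as defined here (and in \cite{Siniora}) only constrains \emph{coherent triples}, where $\range(f)=\dom(g)$ exactly; requiring multiplicativity on all composites is strictly stronger and is in tension with group structure --- for instance an idempotent partial map $p$ (the identity on a proper subset) satisfies $p\circ p=p$, forcing $g_p=1$, and cascading such constraints through your presentation of $\Gamma$ risks collapsing it, or at least requires an argument that it does not. The clique-faithfulness claim in your third paragraph is also asserted rather than proved: in the Hodkinson--Otto construction the relations of $\str{B}$ are \emph{not} simply pulled back along a projection to $A$ (that would make $\str{B}$ a disjoint union of copies of $\str{A}$ and destroy the extension property); the valuations are precisely what prevents big sets from forming cliques, and that is the step you would need to supply. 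In short, the skeleton (EPPA-extension $+$ fibered cover $+$ group-equivariant choice of extensions) is recognisable, but the two substantive points --- why the cover kills big cliques, and why the extensions can be chosen coherently --- are the parts left unproved, and the tool you nominate for the second is the wrong one for this unrestricted setting.
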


Let $X$ be a set and $\mathcal P$ be a family of partial bijections between subsets
of $X$. A triple $(f, g, h)$ from $\mathcal P$ is called a {\em coherent triple} if $\dom(f) = \dom(h), \range(f ) = \dom(g), \range(g) = \range(h)$ and $h = g \circ f$.

Let $X$ and $Y$ be sets, and $\mathcal P$ and $\mathcal Q$ be families of partial bijections between subsets
of $X$ and between subsets of $Y$, respectively. A function $\varphi: \mathcal P \to \mathcal Q$ is said to be a
{\em coherent map} if for each coherent triple $(f, g, h)$ from $\mathcal P$, its image $\varphi(f), \varphi(g), \varphi(h)$ in $\mathcal Q$ is coherent.

An EPPA-witness $\str{B}$ of $\str{A}$ is {\em coherent} if every
partial automorphism $f$ extends to some $\hat{f} \in \Aut(\str{B})$ with  the property that the map $\varphi$ from partial automorphisms of $\str{A}$ to automorphisms of $\str{B}$  given by $\varphi(f) = \hat{f}$ is coherent.

Our third main result is a strengthening of all of the above results to classes of structures
with unary functions. We do not know how to strengthen this theorem for higher arities. 
\begin{thm}
\label{EPPA}
Let $L$ be a language such that every function symbol $F\in L$ is unary. 
Then for every finite $L$-structure $\str{A}$ there exists a finite, irreducible substructure faithful, coherent EPPA-witness $\str{B}$.
Consequently every free amalgamation class $\mathcal K$ of finite $L$-structures has the coherent extension property for partial automorphisms.
\end{thm}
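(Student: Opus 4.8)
The plan is to derive the statement about free amalgamation classes from the single-structure assertion, and to prove the latter by feeding a relational encoding of $\str{A}$ into the coherent relational EPPA theorem of Siniora and Solecki (Theorem~\ref{thm:siniora-solecki}) and then recovering the functions, with unarity used decisively in the recovery step.

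First I would dispose of the final clause. A free amalgamation class $\K$ is determined by a family of forbidden substructures, and this family may be taken to consist of irreducible structures, the functional analogue (Definition~\ref{def:irreducible}) of forbidden cliques. Assume the single-structure assertion and let $\str{A}\in\K$ with $\str{B}$ a finite, irreducible substructure faithful, coherent EPPA-extension. If some forbidden $\str{F}$ embedded into $\str{B}$, its image would be an irreducible substructure $\str{C}\subseteq\str{B}$, so by faithfulness there would be $g\in\Aut(\str{B})$ with $g(C)\subseteq A$, exhibiting a copy of $\str{F}$ inside $\str{A}$ --- impossible. Hence $\str{B}\in\K$, and $\str{B}$ witnesses coherent EPPA for $\str{A}$ within $\K$.

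For the single-structure assertion I would encode the functions relationally so that Theorem~\ref{thm:siniora-solecki} is applied to the correct partial automorphisms. For each $F\in L_{\mathcal F}$ of range arity $r=r(F)$ introduce a relation $R_F$ of arity $r+1$, symmetric in its last $r$ coordinates, and put $R_F(x,\bar y)$ iff $x\in\dom(\func{A}{})$ and $\func{A}{}(x)=\{\bar y\}$; let $\str{A}'$ be the resulting structure over the relational language $L'=L_{\mathcal R}\cup\{R_F:F\in L_{\mathcal F}\}$. The point is that preservation of $\func{A}{}$ is literally preservation of $R_F$, so every partial automorphism of the $L$-structure $\str{A}$ --- an isomorphism between closed substructures respecting both relations and functions --- is in particular a relational partial automorphism of $\str{A}'$. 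Applying Theorem~\ref{thm:siniora-solecki} to $\str{A}'$ produces a finite relational structure $\str{B}'$ which is a coherent, clique faithful EPPA-extension of $\str{A}'$: each relational partial automorphism of $\str{A}'$, hence each $L$-partial automorphism of $\str{A}$, extends to some $\hat f\in\Aut(\str{B}')$, the assignment $f\mapsto\hat f$ is coherent, and every Gaifman clique of $\str{B}'$ is carried into $A$ by an automorphism.

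The crux is to read $\str{B}'$ back as a functional $L$-structure $\str{B}$ on the same vertices, retaining the $L_{\mathcal R}$-relations and declaring $\func{B}{}(x)=\bar y$ when $R_F^{\str{B}'}(x,\bar y)$. Here lies the main obstacle, and the reason unarity is indispensable: Theorem~\ref{thm:siniora-solecki} does not force $R_F^{\str{B}'}$ to be a function. A vertex could acquire two images $\bar y\ne\bar y'$, or a vertex of $A$ outside $\dom(\func{A}{})$ could acquire an image leaving $A$, so that neither single-valuedness nor the requirement that $\str{A}$ be a closed substructure of $\str{B}$ is automatic; and since $\bar y\cup\bar y'$ need not be a clique, clique faithfulness does not by itself repair this. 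I would therefore not use Theorem~\ref{thm:siniora-solecki} as a pure black box but revisit the valuation/product form of the relational construction (Hodkinson--Otto, Siniora), which may be arranged so that $\str{B}'$ carries a projection $\pi\colon B'\to A$ onto the base, each vertex being a marked copy of its image. In this form I would define the functions coordinatewise: $x\in\dom(\func{B}{})$ precisely when $\pi(x)\in\dom(\func{A}{})$, and $\func{B}{}(x)$ consists of the copies of the vertices of $\func{A}{}(\pi(x))$ determined by the fibre of $x$. Because each $F$ is unary, this value is determined by the single vertex $x$, so it is automatically single-valued, keeps $\str{A}$ closed, and is respected by the extending automorphisms $\hat f$, which preserve the fibre structure. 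The identical recipe collapses for a binary function, whose output fibre would be a non-canonical function of two independent fibre coordinates --- exactly where unarity cannot be dispensed with. It remains to transfer the two refinements: since $\func{B}{}$ is interdefinable with $R_F^{\str{B}'}$ one has $\Aut(\str{B})=\Aut(\str{B}')$, so each $\hat f$ is an automorphism of $\str{B}$ and $\str{B}$ is an EPPA-extension of $\str{A}$; coherence is inherited verbatim because the extending automorphisms and their assignment are untouched by the reinterpretation; and an irreducible substructure $\str{C}$ of $\str{B}$ corresponds, after encoding, to a clique in the Gaifman graph of $\str{B}'$, whence clique faithfulness supplies $g\in\Aut(\str{B}')=\Aut(\str{B})$ with $g(C)\subseteq A$. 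The technical heart is thus the coordinatewise definition of $\str{B}$ inside the relational construction: verifying single-valuedness, that $\str{A}$ embeds as a closed substructure, and that functional irreducibility matches the clique structure of the encoding.
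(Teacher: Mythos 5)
Your reduction of the ``consequently'' clause to the single-structure statement is fine, and you have correctly located the crux: the relational EPPA extension of the encoded structure does not make $R_F$ functional, so one cannot use the relational theorem as a black box. But the two steps you propose to repair this both fail as stated. First, the ``coordinatewise'' definition of $\func{B}{}$ is not actually well-defined: in the Hodkinson--Otto valuation construction the fibre over a base vertex $u$ consists of \emph{all} pairs $(u,\chi_u)$ for admissible valuations $\chi_u$, and when $\func{A}{}(\pi(x))=\{u\}$ there is no canonical choice of which copy of $u$ in that fibre should be the value at $x$ --- ``determined by the fibre of $x$'' does not determine anything, since the fibres over $\pi(x)$ and over $u$ are independent. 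The paper's resolution is structural, not a tweak: the vertices of the new structure $\str{C}$ are not marked copies of single vertices but entire \emph{valuation structures} $\str{V}_b$, each an isomorphic copy of a closure $\cl_{\str{A}}(\cdot)$ built from mutually generic pairs $(v,\chi_v)$; the genericity condition on pairs of valuations then forces $\func{C}{}(\str{V}_v)\subseteq \str{V}_v$, which is exactly what yields single-valuedness and closedness of the image of $\str{A}$. Unarity enters because each vertex can carry its own closure, and because closures are unary so the decomposition arguments go through.

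Second, your faithfulness argument is based on a false equivalence: an irreducible $L$-structure need \emph{not} encode to a clique in the Gaifman graph of its relational reduct. The paper's own example (Figure~\ref{irreduciblefig}) --- $(a,b)\in\rel{A}{}$, $\func{A}{}(a)=\{c\}$, $\func{A}{}(b)=\{d\}$ --- is irreducible in the functional sense but its Gaifman graph omits, e.g., the pair $\{c,d\}$, so clique faithfulness of $\str{B}'$ says nothing about it. This is precisely why the paper starts from Herwig's coherent EPPA (Theorem~\ref{thm:Herwig}) rather than the clique-faithful version and then \emph{proves} irreducible-substructure faithfulness of $\str{C}$ directly: an irreducible substructure $\str{D}$ of $\str{C}$ is shown to be generic by exhibiting any non-generic $\str{D}$ as a free amalgam of the proper substructures $\str{E}_a=\{\str{V}_v: \str{V}_a\notin\cl_{\str{D}}(\str{V}_v)\}$ and $\str{E}_b$ (using unarity of closures), genericity forces the projection $\pi(D)$ to be small, and the technical extension lemma (Lemma~\ref{lem:technical}) then moves $\str{D}$ into $\phi(\str{A})$. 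Without an argument of this kind, your proposal does not establish irreducible-substructure faithfulness, and hence does not yield the ``consequently'' clause either.
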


\subsection{Further background and notation}
\label{sec:background}
We now review some standard graph-theoretic and model-theoretic notions (see e.g.~\cite{Hodges1993}).

A \emph{homomorphism} $f:\str{A}\to \str{B}$ is a mapping $f:A\to B$ such that:
\begin{enumerate}
\item[(a)] for every relation symbol  $\rel{}{}\in L_{\mathcal R}$ of arity $a$ it holds:
$$(x_1,x_2,\ldots, x_{a})\in \rel{A}{}\implies (f(x_1),f(x_2),\ldots,f(x_{a}))\allowbreak\in \rel{B}{},$$
\item[(b)] for every function symbol $\func{}{}\in L_{\mathcal F}$ of arity $a$ it holds:
$$f(\func{A}{}(x_1,x_2,\allowbreak \ldots, x_{a}))\subseteq \func{B}{}(f(x_1),f(x_2),\ldots,\allowbreak f(x_{a})).$$
\end{enumerate}
 Here, for a subset $A'\subseteq A$ we denote by $f(A')$ the set $\{f(x): x\in A'\}$ and by $f(\str{A})$ the homomorphic image of a structure $\str{A}$.

 If $f$ is injective, then $f$ is called a \emph{monomorphism}. A monomorphism $f$ is an \emph{embedding} if it holds:
\begin{enumerate}
\item[(a)] for every relation symbol  $\rel{}{}\in L_{\mathcal R}$ of arity $a$ it holds:
$$(x_1,x_2,\ldots, x_{a})\in \rel{A}{}\iff (f(x_1),f(x_2),\ldots,f(x_{a}))\allowbreak\in \rel{B}{},$$
\item[(b)] for every function symbol $\func{}{}\in L_{\mathcal F}$ of arity $a$ it holds:
$$f(\func{A}{}(x_1,x_2,\allowbreak \ldots, x_{a})) = \func{B}{}(f(x_1),f(x_2),\ldots,\allowbreak f(x_{a})).$$
\end{enumerate}
  If $f$ is an embedding which is an inclusion then $\str{A}$ is a \emph{substructure} (or \emph{subobject}) of $\str{B}$. For an embedding $f:\str{A}\to \str{B}$ we say that $\str{A}$ is \emph{isomorphic} to $f(\str{A})$ and $f(\str{A})$ is also called a \emph{copy} of $\str{A}$ in $\str{B}$. Thus $\str{B}\choose \str{A}$ is defined as the set of all copies of $\str{A}$ in $\str{B}$.

Given $\str{A}\in \K$ and $B\subset A$, the {\em closure of $B$ in $\str{A}$}, denoted by $\cl_\str{A}(B)$, is the smallest substructure of $\str{A}$ containing $B$.
Closure in $\str{A}$ is {\em unary} if $\cl_\str{A}(B)=\bigcup_{v\in B}\cl_\str{A}(v)$ for all $B \subset A$.

\begin{figure}
\centering
\includegraphics{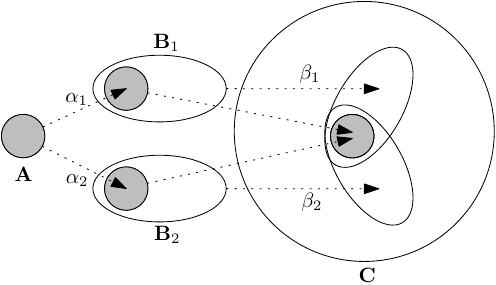}
\caption{An amalgamation of $\str{B}_1$ and $\str{B}_2$ over $\str{A}$.}
\label{amalgamfig}
\end{figure}
Let $\str{A}$, $\str{B}_1$ and $\str{B}_2$ be structures, $\alpha_1$ an embedding of $\str{A}$
into $\str{B}_1$ and $\alpha_2$ an embedding of $\str{A}$ into $\str{B}_2$. Then
every structure $\str{C}$
 with embeddings $\beta_1:\str{B}_1 \to \str{C}$ and
$\beta_2:\str{B}_2\to\str{C}$ such that $\beta_1\circ\alpha_1 =
\beta_2\circ\alpha_2$ is called an \emph{amalgamation} of $\str{B}_1$ and $\str{B}_2$ over $\str{A}$ with respect to $\alpha_1$ and $\alpha_2$. See Figure~\ref{amalgamfig}.
We will call $\str{C}$ simply an \emph{amalgamation} of $\str{B}_1$ and $\str{B}_2$ over $\str{A}$
(as in most cases $\alpha_1$ and $\alpha_2$ can be chosen to be inclusion embeddings).
We say that such an amalgamation is \emph{free} if $\beta_1(x_1)=\beta_2(x_2)$ if and only if $x_1\in \alpha_1(A)$ and $x_2\in \alpha_2(A)$,
$C=\beta_1(B_1)\cup \beta_2(B_2)$ and whenever a tuple $\vv{t}$ of vertices of $\str{C}$ contains vertices of both
$\beta_1(B_1\setminus \alpha_1(A))$ and $\beta_2(B_2\setminus \alpha_2(A))$, then $\vv{t}$ is in no relation of $\str C$,
and for every function $\func{}{}\in L$ with $\arityf{} = |\vv{t}|$ it holds that $\func{C}{}(\vv{t})=\emptyset$.

\begin{defn}
\label{defn:amalg}
An \emph{amalgamation class} is a class $\K$ of finite structures satisfying the following three conditions:
\begin{enumerate}
\item {\em Hereditary property:} For every $\str{A}\in \K$ and a substructure $\str{B}$ of $\str{A}$ we have $\str{B}\in \K$;
\item {\em Joint embedding property:} For every $\str{A}, \str{B}\in \K$ there exists $\str{C}\in \K$ such that $\str{C}$ contains both $\str{A}$ and $\str{B}$ as substructures;
\item {\em Amalgamation property:} 
For $\str{A},\str{B}_1,\str{B}_2\in \K$ and $\alpha_1$ embedding of $\str{A}$ into $\str{B}_1$, $\alpha_2$ embedding of $\str{A}$ into $\str{B}_2$, there is $\str{C}\in \K$ which is an amalgamation of $\str{B}_1$ and $\str{B}_2$ over $\str{A}$ with respect to $\alpha_1$ and $\alpha_2$.
\end{enumerate}
If the $\str{C}$ in the amalgamation property can always be chosen as the free amalgamation, then $\K$ is a {\em free amalgamation class}. 
\end{defn}

We will give examples of free amalgamation classes (in the case where the language $L$ is not relational)  in Section \ref{sec:examples}.

\section{Free amalgamation classes are Ramsey}
\label{sec:Ramsey}
The proof of Theorem~\ref{thm:main} is a variation of the {\em Partite Construction} introduced by  Ne\v set\v
ril and R\"odl for classes of hypergraphs and relational structures (see~\cite{Nevsetvril1989}) which was recently extended 
to classes with unary~\cite{Hubivcka2014} and later general
closures (or functions)~\cite{Hubicka2016}.  The Partite Construction is 
a machinery which allows one to transform one Ramsey class into another, more special,
Ramsey class.  What follows is a Partite Construction proof (as used previously, for example, in \cite{Nevsetvril2007}) done in the context
of structures involving set-valued functions.

The basic Ramsey class in our construction will be provided by the following 
result about the Ramsey property of ordered structures with relations and (total) functions.
\begin{thm}[Ramsey theorem for finite models, Theorem 2.19 of~\cite{Hubicka2016}]
\label{thm:models}
Suppose $L$ is a language containing a relational symbol $\leq$ and let 
$\vv{\Str}(L)$ be the class of all finite $L$-structures where
$\leq$ is a linear ordering of the vertices.  Then $\vv{\Str}(L)$ is a Ramsey class.
\end{thm}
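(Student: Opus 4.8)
The plan is to deduce the theorem from the Ne\v set\v ril--R\"odl Theorem (Theorem~\ref{thm:NRoriginal}) by encoding functions as relations and running a partite construction in the functional setting. First I would pass to a purely relational language $L^+$ in which each function symbol $F$ of domain arity $d(F)$ and range arity $r(F)$ is replaced by a relation $R_F$ of arity $d(F)+r(F)$; an ordered model $\str{A}$ is encoded by the $L^+$-structure $\str{A}^+$ whose $R_F$ is the graph of the function $F$ of $\str{A}$, with the symmetry of the range reflected by symmetrising the last $r(F)$ coordinates. Total models correspond to the $L^+$-structures in which every $R_F$ is the graph of a total function, and, because an embedding of models preserves function values in both directions, a relational embedding $\str{A}^+\to \str{C}^+$ restricts each $R_F$ correctly on its image.

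The difficulty---and the reason one cannot simply quote Theorem~\ref{thm:NRoriginal} on $L^+$---is that neither functionality nor totality survives the relational encoding. Functionality cannot be imposed by forbidding irreducible substructures: the configuration asserting two distinct values $\vec y\neq \vec y'$ for one domain tuple $\vec x$ is the free amalgam of the two single-value configurations over $\vec x$, hence \emph{reducible}, so the ``moreover'' clause of Theorem~\ref{thm:NRoriginal} cannot exclude it. Totality is worse, being a positive existential requirement that no family of forbidden substructures can express. Thus the relational Ramsey object produced by Ne\v set\v ril--R\"odl is in general neither functional nor total and must be repaired.

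I would therefore build $\str{C}$ by a partite construction whose base Ramsey input is Theorem~\ref{thm:NRoriginal}. Fixing $N=|B|$, one works with ordered $N$-partite structures and \emph{transversal} copies of $\str{A}$ and $\str{B}$, obtains a relational ``picture'' carrying many copies of $\str{B}^+$ from a relational Ramsey object for $\str{B}$, and then processes the copies of $\str{A}$ one at a time, at each step amalgamating in a structure provided by the partite version of Ne\v set\v ril--R\"odl (the Partite Lemma, itself a consequence of the Hales--Jewett theorem). On the relational skeleton each amalgamation is the \emph{free} amalgamation over the shared parts, and this automatically keeps every $R_F$ a partial function: free amalgamation creates no tuple meeting both private sides, so no domain tuple is ever assigned two values. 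After each step I would totalise the functions by assigning every still-undefined domain tuple a value according to a fixed canonical rule driven by the linear ordering (for instance, sending it into a designated terminal block placed above all parts), returning to $\overrightarrow{\Str}(L)$.

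The heart of the argument, and the step I expect to be the main obstacle, is showing that this totalisation can be carried out coherently across all stages so that (i) functionality is never broken, (ii) the final $\str{C}$ is a genuine total model, and (iii) no \emph{new} transversal copy of $\str{A}$ is created by the added function values, so that the model-copies of $\str{A}$ in $\str{C}$ are exactly the closed transversal copies already present on the relational skeleton. Granting (iii), a $k$-colouring of the model-copies of $\str{A}$ in $\str{C}$ pulls back to a colouring of the transversal copies of $\str{A}^+$ on the skeleton; the partite Ramsey property yields a transversal copy of $\str{B}^+$ on which the colouring is constant, and since $\str{B}$ is total this copy is closed under the functions of $\str{C}$, hence is a model-copy of $\str{B}$ on which all sub-copies of $\str{A}$ receive a single colour, giving $\str{C}\longrightarrow (\str{B})^{\str{A}}_k$. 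The linear ordering and the symmetric-range convention are precisely what make the canonical totalisation in (i)--(iii) available, which is why the statement is phrased for ordered structures with symmetric ranges.
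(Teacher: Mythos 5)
You are attempting to prove a statement that this paper does not prove at all: Theorem~\ref{thm:models} is imported verbatim as Theorem~4.26 of~\cite{Hubicka2016} and used as the black-box input to the partite construction here. The paper's only comment on its proof is telling: ``the proof of Theorem~\ref{thm:models} involves a recursive nesting of the Partite Constructions to establish valid non-unary closures.'' Your diagnosis of why one cannot simply quote Theorem~\ref{thm:NRoriginal} on the relational encoding is correct and well put --- the two-values-for-one-domain-tuple configuration is a free amalgam over the domain tuple, hence reducible, and totality is a positive existential condition --- but your proposed repair collapses exactly at the point you flag as ``the main obstacle.'' Conditions (i)--(iii) of your totalisation are not a technical loose end; they are the entire content of the theorem, and the single-pass scheme you describe does not establish them.

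Concretely: after the free amalgamations on the relational skeleton, the structure contains many ``mixed'' tuples (spanning distinct amalgamated pictures, or meeting a part in several vertices) on which every function is undefined, and totality forces you to assign each one a value. Sending these into a designated terminal block keeps the functions single-valued and (if the block is itself function-closed) total, but it gives you no control over condition (iii): a copy of $\str{A}$ in the total model $\str{C}$ is a function-closed subset with the right isomorphism type, and for non-unary functions closedness is a global condition that the default values can easily create on vertex sets that were never transversal copies on the skeleton --- for instance sets mixing skeleton vertices with terminal-block vertices, or straddling two freely amalgamated pictures whose mixed tuples now have values. Nothing in the linear order or the symmetric-range convention rules this out. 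This is precisely the ``valid non-unary closures'' problem, and the actual proof in~\cite{Hubicka2016} does not resolve it by a canonical completion at all: it recursively nests partite constructions, handling the closure structure level by level, which is a substantially more elaborate mechanism than the one construction-plus-totalisation pass you sketch. (The minimal-vertex completion trick that this paper does use in the proof of Theorem~\ref{thm:main} goes in the opposite direction --- it degenerates partial functions into trivially total ones in order to \emph{apply} Theorem~\ref{thm:models}, and cannot serve as a proof of it.)
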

This is a strengthening of the theorem giving the Ramsey property of ordered relational
structures proved independently by Ne\v set\v ril-R\"odl~\cite{Nevsetvril1977b}
and Abramson-Harrington~\cite{Abramson1978} in 1970s. Considering functions is a rather difficult task and the proof of Theorem~\ref{thm:models} involves a recursive
nesting of the Partite Constructions to establish valid non-unary closures.
Building on Theorem~\ref{thm:models} our task is significantly easier and we
only concentrate on further refining the Ramsey structure given by Theorem~\ref{thm:models} into one belonging
to a given free amalgamation class. This is done by tracking all irreducible substructures of the object constructed.

We shall remark that recent paper of second two authors~\cite{Hubicka2016} defines structures with partial function to singletons only.
In the Ramsey context this is however equivalent because the structures are ordered.
Thus it is possible to replace
a function $F$ by symbols $\func{}{1},\func{}{2},\func{}{3},\ldots$ and put
$\func{}{i}(\vv{t})$ to be the $i$-th element of $F(\vv{t})$ whenever it is defined.
With this it may be seen then that Theorem 2.11 of~\cite{Hubicka2016} implies Theorem~\ref{thm:main}. However
the proof presented here is cleaner and much simpler.

 As mentioned before, a relational structure $\str{A}$ is {\em irreducible}
if every pair of vertices belongs to some tuple in a relation of $\str{A}$.
It is well known that every free amalgamation class $\K$ of relational
structures can be equivalently described as a class of finite structures
that contains no copies of structures from 
a fixed family $\F$ of irreducible relational structures (see, for example, \cite{Hubicka2016}).
In fact the family $\F$ consists of all minimal structures not belonging to class $\K$.
This easy observation explains the correspondence of Theorem~\ref{thm:NRoriginal} and Theorem~\ref{thm:NR}.

Our construction is based on the following refinement of the notion of
irreducible structure in the context of structures with functions
(which allows us to strengthen the construction in~\cite{Hubicka2016}):

\begin{defn}
\label{def:irreducible}
An $L$-structure $\str{A}$ is \emph{irreducible} if it cannot be created as a free amalgamation of any two of its proper substructures.
\end{defn}
\begin{example}
Consider the language $L$ consisting of one binary relation $\rel{}{}$ and one unary function $\func{}{}$.
An example of an irreducible structure is a structure $\str{A}$ (depicted in Figure~\ref{irreduciblefig}) on vertices $A=\{a,b,c,d\}$ where $(a,b)\in \rel{A}{}$,
 $\func{A}{}(a)=\{c\}$, $\func{A}{}(b)=\{d\}$ and $\func{A}{}(a)=\func{A}{}(b)=\emptyset$. This structure is reducible if $\func{}{}$ is seen
as a relation rather than a function.
\begin{figure}
\centering
\includegraphics{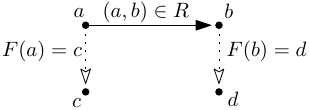}
\caption{An example of an irreducible structure with a binary relation $R$ and a unary function $F$.}
\label{irreduciblefig}
\end{figure}
\end{example}
\label{sec:partlem}
The basic part of our construction of Ramsey objects is a variant of the Partite Lemma~(introduced in \cite{Nevsetvril1989} and refined for closures in \cite{Hubicka2016}) which deals with the following objects.
\begin{defn}[$\str{A}$-partite system]
Let $L$ be a language and $\str{A}$ an  $L$-structure. Assume $A = \{1, 2,\ldots, a\}$.  An \emph{$\str{A}$-partite $L$-system} is a tuple $(\str{A},{\mathcal X}_\str{B},\allowbreak \str{B})$ 
where $\str{B}$ is an $L$-struc\-ture and $\mathcal X_\str{B}=\{X^1_\str{B},X^2_\str{B},\ldots, X^a_\str{B}\}$  is a partition of the vertex set of $\str{B}$ into $a$ many classes $X^i_\str{B}$, called \emph{parts} of $\str{B}$,  such that 
\begin{enumerate}
\item the mapping $\pi$ which maps every $x \in X^i_\str{B}$ to $i$, $i = 1,2,\ldots,a$, is a
homo{\-}morphism
 $\str{B}\to\str{A}$ ($\pi$ is called the \emph{projection}); 
\item every tuple in every relation of $\str{B}$ meets every class $ X^i_\str{B}$ in at most one element (i.e. these tuples are called \emph{transversal} with respect to the partition).
\item for every function $\func{}{}\in L_\mathcal F$ it holds that for every $\vv{t}\in A^{\arityf{}}$ such that $\func{A}{}(\vv{t})\neq \emptyset$ the tuple created by concatenation of $\vv{t}$ and $\func{B}{}(\vv{t})$ (in any order) is transversal.
\end{enumerate}
\end{defn}
The \emph{isomorphisms} and \emph{embeddings} of $\str{A}$-partite systems, say of $\str{B}_1$ into $\str{B}_2$, are defined as the isomorphisms and embeddings of structures together with the condition that all parts 
are being preserved (the part $X^i_{\str{B}_1}$ is mapped to $X^i_{\str{B}_2}$  for every $i = 1,2,\ldots,a$).  Of course, $\str{A}$ itself can be considered as an $\str{A}$-partite system.

We say that an $\str{A}$-partite $L$-system is {\em transversal} if all of its parts consist of at most one vertex. 
Thus $\str{A}$-partite $L$-systems are naturally ordered by ordering of its parts.

\begin{lem}[Partite Lemma with relations and functions]\label{partlem}
Let $L$ be a language, $\str{A}$ be a finite $L$-structure, and $\str{B}$ be a finite $\str{A}$-partite $L$-system.
Then there exists a finite $\str{A}$-partite $L$-system $\str{C}$ such that 
$$
\str{C}\longrightarrow (\str{B})^\str{A}_2.
$$
Moreover if every irreducible subsystem of $\str{B}$ is transversal, then we can also ensure that every irreducible subsystem of $\str{C}$ is transversal.
\end{lem}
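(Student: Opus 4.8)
The plan is to realise $\str{C}$ as a Hales--Jewett power of $\str{B}$, so that points of the cube become copies of $\str{A}$ and combinatorial lines become copies of $\str{B}$. Let $m=\left|{\str{B}\choose\str{A}}\right|$, take the alphabet $\Sigma={\str{B}\choose\str{A}}$, and let $N=\mathrm{HJ}(m,2)$ be a Hales--Jewett number. I would define $\str{C}$ to be the $N$-th coordinatewise power of $\str{B}$: the part $X^i_\str{C}$ consists of all words $(X^i_\str{B})^N$, a relational tuple lies in $\rel{C}{}$ iff its projection to each coordinate lies in $\rel{B}{}$, and $\vec{t}\in\dom(\func{C}{})$ iff every coordinate of $\vec{t}$ lies in $\dom(\func{B}{})$, with $\func{C}{}$ then computed coordinatewise. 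The coordinate projections $p_n\colon\str{C}\to\str{B}$ are homomorphisms, and since each $p_n$ respects $\pi$ the range of a coordinatewise function lands in the same parts in every coordinate; this makes $\str{C}$ a well-defined $\str{A}$-partite system, transversality of its relational and function tuples being inherited coordinatewise from $\str{B}$.

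Each word $s=(\sigma_1,\dots,\sigma_N)\in\Sigma^N$ names a copy of $\str{A}$ in $\str{C}$, whose vertex in part $i$ is $(\sigma_1(i),\dots,\sigma_N(i))$; because every coordinate is a genuine copy of $\str{A}$ in $\str{B}$, the coordinatewise definition reproduces exactly the relations and function values of $\str{A}$, and the assignment $s\mapsto(\text{this copy})$ is injective. For a combinatorial line with moving coordinate set $S$ and fixed copies $(c_n)_{n\notin S}$ I would define $e\colon\str{B}\to\str{C}$ by $e(b)_n=b$ for $n\in S$ and $e(b)_n=c_n(\pi(b))$ for $n\notin S$. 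That $e$ is a part-preserving embedding uses only that $\pi\colon\str{B}\to\str{A}$ is a homomorphism: on the fixed coordinates the tuple $(c_n(\pi(b_1)),\dots)$ is forced into a relation or domain exactly when the image tuple of $\str{A}$ is, which is automatic whenever the original $\str{B}$-tuple is present, and the coordinatewise values agree with $e(\func{B}{}(\vec{t}))$. The copies of $\str{A}$ inside $e(\str{B})$ are exactly the $m$ points of the line, all of which are words in $\Sigma^N$. Hence, given a $2$-colouring of ${\str{C}\choose\str{A}}$, restricting to $\Sigma^N$ and applying Hales--Jewett yields a monochromatic line, i.e. a copy $e(\str{B})$ all of whose copies of $\str{A}$ receive one colour; so $\str{C}\longrightarrow(\str{B})^\str{A}_2$.

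For the ``moreover'' I would argue contrapositively: assuming every irreducible subsystem of $\str{B}$ is transversal, I show that every non-transversal subsystem $\str{D}$ of $\str{C}$ is reducible. Suppose $u\neq v$ lie in the same part of $\str{D}$; as words they differ in some coordinate $n_0$, say $u(n_0)=p\neq q=v(n_0)$. Set $D_1=\{x\in D: x(n_0)\neq q\}$ and $D_2=\{x\in D: x(n_0)\neq p\}$, so that $D_1\cup D_2=D$, $u\in D_1\setminus D_2$, $v\in D_2\setminus D_1$, and $D_0=D_1\cap D_2$. The key observation is that no relational tuple and no function domain-tuple of $\str{C}$ can meet both $D_1\setminus D_0$ and $D_2\setminus D_0$: its image under $p_{n_0}$ would be a tuple of $\str{B}$ containing both $p$ and $q$, two distinct vertices in a single part, contradicting transversality of the tuples of $\str{B}$. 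Thus no tuple crosses the split, exhibiting $\str{D}$ as a free amalgamation of its proper substructures on $D_1$ and $D_2$, and hence reducible.

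The main obstacle is exactly the verification that $D_1$ and $D_2$ are \emph{closed} substructures, so that this split is a genuine free amalgamation rather than merely a hyperedge-respecting partition. The difficulty is caused by functions whose range lies in the part containing $u$ and $v$ while their domain lies elsewhere: such a function might have its whole domain inside $D_1$ yet its value forced onto the $q$-side, breaking closure of $D_1$. This is where the full strength of the hypothesis on $\str{B}$ enters—one must refine the split, closing $D_1$ and $D_2$ under all function symbols and checking, by projecting the irreducible substructures of $\str{B}$ through $p_{n_0}$ and using their transversality, that this closing never identifies the $p$-side with the $q$-side. Carrying out this closure-respecting refinement is the technical heart of the lemma; by comparison the Hales--Jewett construction of the first part is routine.
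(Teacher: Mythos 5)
Your construction of $\str{C}$ and the Hales--Jewett argument for $\str{C}\longrightarrow(\str{B})^{\str{A}}_2$ coincide with the paper's proof and are fine. The problem is the ``moreover'' part, and it is exactly the one you flag yourself: the sets $D_1=\{x\in D: x(n_0)\neq q\}$ and $D_2=\{x\in D: x(n_0)\neq p\}$ need not be closed under the function symbols (a domain tuple lying entirely in $D_1$ can have a function value containing a vertex whose $n_0$-th coordinate is $q$ --- transversality of the concatenated tuple only forbids it from meeting a part twice, not from meeting the part of $p$ and $q$ for the first time in the value), so they are not substructures and the claimed free-amalgam decomposition of $\str{D}$ does not exist as stated. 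Announcing that one must ``refine the split, closing $D_1$ and $D_2$ under all function symbols'' and calling this the technical heart of the lemma is not a proof of that step, and it is not clear the repair converges: closing $D_1$ may force in vertices with $n_0$-coordinate $q$ and vice versa, collapsing the split. Note also that your argument up to that point uses only the transversality of tuples built into the definition of an $\str{A}$-partite system, and never the actual hypothesis that irreducible subsystems of $\str{B}$ are transversal --- a sign that the decomposition is being attempted in the wrong place.

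The paper decomposes downstairs rather than upstairs, which dissolves the closure problem. Given a non-transversal irreducible $\str{D}$ with distinct $f_1,f_2$ in the same part, for each coordinate $i$ consider the image $\str{D}_i$ of $\str{D}$ under the $i$-th coordinate projection, a homomorphism into $\str{B}$. If $\str{D}_i$ were a free amalgam of proper substructures $\str{E}_1,\str{E}_2$, the preimages $p_i^{-1}(E_1)\cap D$ and $p_i^{-1}(E_2)\cap D$ \emph{would} be substructures of $\str{D}$ --- preimages of substructures under a homomorphism are closed under functions, which is precisely what your fibres over the single vertices $p$ and $q$ fail to be --- and they would exhibit $\str{D}$ as a free amalgam. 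Hence each $\str{D}_i$ is irreducible, therefore transversal by the hypothesis on $\str{B}$, therefore $f_1(i)=f_2(i)$; as $i$ is arbitrary, $f_1=f_2$, a contradiction. To salvage your approach, replace the fibres over $\{p\}$ and $\{q\}$ by the preimages of a putative free-amalgam decomposition of the single projection $\str{D}_{n_0}$; at that point you have the paper's argument.
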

(Compare the Partite Lemma in~\cite{Hubicka2016}. Here we introduce the statement about transversality of irreducible substructures.
Note that the embeddings considered in the Ramsey statement are all as $\str{A}$-partite systems.)

Advancing the proof of Lemma~\ref{partlem},
for completeness, we briefly recall the Hales-Jewett Theorem~\cite{Hales1963}.
Consider the family of functions $f:\{1,2,\ldots, N\} \to \Sigma$ for some finite alphabet $\Sigma$. A \emph{combinatorial line} $\mathcal L$ is a pair $(\omega,h)$ where $\emptyset\neq\omega\subseteq \{1,2,\ldots, N\}$ and $h$ is a function from $\{1,2,\ldots, N\}\setminus \omega$ to $\Sigma$. The combinatorial line $\mathcal L$ describes the family of all those functions $f:\{1,2,\ldots, N\} \to \Sigma$ that are constant on $\omega$ and $f(i)=h(i)$ otherwise. The Hales-Jewett Theorem guarantees, for sufficiently large $N$, that for every $2$-colouring of the functions $f:\{1,2,\ldots, N\} \to \Sigma$ there exists a monochromatic combinatorial line.

\begin{proof}[Proof of Lemma~\ref{partlem}]
Assume without loss of generality  $A = \{1, 2,\ldots, a\}$  and denote by $\mathcal X_\str{B} = \{X^1_\str{B},X^2_\str{B},\ldots, X^a_\str{B}\}$ the parts of $\str{B}$.
We take $N$ sufficiently large (that will be specified later) and construct an $\str{A}$-partite $L$-system  $\str{C}$
with parts $\mathcal X_\str{C} = \{X^1_\str{C},X^2_\str{C},\ldots, X^a_\str{C}\}$  as follows:
\begin{enumerate}
\item For every $1\leq i\leq a$ let $X_\str{C}^i$ be the set of all functions $$f:\{1,2,\ldots,N\}\to X_\str{B}^i.$$
\item For every relational symbol $\rel{}{}\in L_\mathcal R$, put $$(f_1,f_2,\ldots, f_{\arity{}})\in \rel{C}{}$$ if and only if for every $1\leq j\leq N$ it holds that $$(f_1(j),f_2(j),\ldots, f_{\arity{}}(j))\in \rel{B}{}.$$
\item
 For every function symbol $\func{}{}\in L_\mathcal F$ we put $$\func{C}{}(f_1,f_2\ldots, f_{\arityf{})})\neq \emptyset$$
if and only if there exists $r\geq 1$ and a set $\{i_1,i_2,\ldots, i_r\}\subseteq A$ such that for every $1\leq j\leq N$ it holds that $\func{B}{}(f_1(j),f_2(j),\ldots, f_{\arityf{}}(j))$ is defined and its value contains vertices precisely in parts $i_1,i_2,\ldots, i_{r}$. In this case the value of $\func{C}{}(f_1,f_2\ldots, f_{\arityf{}})$ consists of one vertex in each of parts $i_1,i_2,\ldots, i_{r(F)}$.
The vertex in part $i_k$ is the function $f_k:\{1,2,\ldots,N\}\to X_\str{B}^{i_k}$ defined by putting $f_k(j')$ to be the (unique) vertex in $\func{B}{}(f_1(j'),f_2(j'),\allowbreak \ldots, f_{\arityf{}}(j'))\cup X_\str{B}^{i_k}$ (for each $1\leq j'\leq N$).
\end{enumerate}
This completes the construction of $\str{C}$.
It is easy to check that $\str{C}$ is  indeed an $\str{A}$-partite $L$-system with parts $\mathcal X_\str{C} = \{X^1_\str{C},\allowbreak X^2_\str{C},\allowbreak \ldots,\allowbreak  X^a_\str{C}\}$.

\medskip

We verify that, if $N$ is large enough,  $\str{C}\longrightarrow (\str{B})^\str{A}_2$.
Let $\widetilde{\str{A}}_1, \widetilde{\str{A}}_2,\ldots, \widetilde{\str{A}}_t$ be an enumeration of all subsystems of $\str{B}$ which are isomorphic to $\str{A}$. 
Put  $\Sigma=\{1,2,\ldots, t\}$ which we consider as an alphabet.
Each combinatorial line $\mathcal L=(\omega, h)$ in $\Sigma^N$ corresponds to an embedding $e_\mathcal L:\str{B}\to \str{C}$ which assigns to every vertex $v\in X^p_\str{B}$ a function $e_\mathcal L(v):\{1,2,\ldots, N\}\to X^p_\str{B}$ (i.e. a vertex of $X^p_\str{C}$) such that:
$$e_\mathcal L(v)(j)=
 \begin{cases} 
    \hbox{$v$ for $j\in \omega$, and,}\\
    \hbox{the unique vertex in $\widetilde A_{h(j)}\cap X^p_\str{B}$ otherwise.}
   \end{cases}
$$
It follows from the construction of $\str{C}$ and from the fact that $\str{B}$ has a projection to $\str{A}$ that  $e_\mathcal L$ is an embedding.

 Let $N$ be the Hales-Jewett number guaranteeing a monochromatic line in any $2$-colouring of the $N$-dimensional cube over an alphabet $\Sigma$.
Now assume that $\mathcal{A}_1, \mathcal{A}_2$ is a $2$-colouring of all copies of $\str{A}$ in $\str{C}$.
Using the construction of $\str{C}$ we see that  among copies of $\str{A}$ are copies induced by an $N$-tuple    $(\widetilde{\str{A}}_{u(1)}, \widetilde{\str{A}}_{u(2)},\ldots,\widetilde{\str{A}}_{u(N)})$ of copies of $\str{A}$ in $\str{B}$ for every function $u:\{1,2,\ldots,\allowbreak N\}\to \{1,2,\ldots, t\}$. However such copies are coded by the elements of the cube $\{1,2,\allowbreak \ldots,\allowbreak t\}^N$ and thus there is a monochromatic 
combinatorial line $\mathcal L$. The monochromatic copy of $\str{B}$ is then $e_\mathcal L(\str B)$.

\medskip

Finally we verify that if every irreducible subsystem of $\str{B}$ is transversal
then also every irreducible subsystem of $\str{C}$ is transversal. Assume the contrary
and denote by $\str{D}$ a non-transversal irreducible subsystem of $\str{C}$.
Denote by $f_1,f_2,\ldots, f_n$ an enumeration of all distinct vertices of $\str{D}$. By non-transversality
assume that $f_1$ and $f_2$ are in the same part.  For every $j\in \{1,2,\ldots, N\}$
denote by $\str{D}_j$ the  substructure of $\str{B}$ on vertices $f_1(j),f_2(j),\ldots, f_n(j)$.
Because $\str{D}_j$ is a homomorphic image of $\str{D}$ it is irreducible and thus it follows that $\str{D}_j$ is transversal.
Consequently $f_1(j)=f_2(j)$.  Because this holds for every choice of $j$, we have $f_1=f_2$. A contradiction.
\end{proof}
\begin{proof}[Proof of Theorem~\ref{thm:main}]
Given $\str{A},\str{B}\in \vv{\K}\subseteq \vv{\Str}(\vv{L})$ use Theorem~\ref{thm:models} to obtain
$$\str{C}_0\longrightarrow (\str{B})^\str{A}_2.$$

Enumerate all copies of $\str{A}$ in $\str{C}_0$  as  $\{\widetilde{\str{A}}_1, \widetilde{\str{A}}_2,\ldots, \widetilde{\str{A}}_b\}$.
We will define $\str{C}_0$-partite systems (`pictures') $\str{P}_0, \str{P}_1, \ldots, \str{P}_b$ such that
\begin{enumerate}
\item[(i)] every irreducible subsystem $\str{E}$ of $\str{P}_k$ is transversal and (if seen as a structure) is isomorphic to some substructure of $\str{B}$, and,
\item[(ii)] in any $2$-colouring of ${\str{P}_k}\choose{\str{A}}$, $1\leq k\leq b$, there exists a copy $\widetilde{\str{P}}_{k-1}$ of $\str{P}_{k-1}$ such that all copies of $\str{A}$ with a projection to $\widetilde{\str{A}}_k$ are monochromatic.
\end{enumerate}
We then show that putting $\str{C}$ to be $\str{P}_b$ (seen as a structure) with the linear order completed arbitrarily (extending the order of the parts), we have the desired Ramsey property $\str{C}\longrightarrow (\str{B})^{\str{A}}_2$.

We first verify that if (i) holds, then $\str{C}\in \vv{\K}$. Assume the contrary. Denote by $\str{F}$ the minimal substructure of $\str{C}$ such that $\str{F}\notin \vv{\K}$. Because $\K$ is
a free amalgamation class, we know that the unordered reduct of $\str{F}$ is irreducible. By (i) however $\str{F}$ is a substructure of $\str{B}\in \K$. A contradiction to  $\K$ being hereditary.

It remains to prove (i) and (ii).
Put $C_0 = \{1,\ldots,c\}$ and  $\mathcal X_{\str{P}_k} = \{X_k^1, X_k^2,\allowbreak \dots,\allowbreak X_k^c\}$.
We proceed by induction on $k$.

\begin{enumerate}
\item
\begin{figure}[t]
\centering
\includegraphics{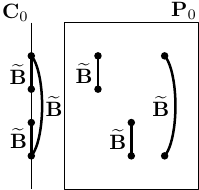}
\caption{The construction of $\str{P}_0$.}
\label{fig:picture0}
\end{figure}
The  picture $\str{P}_0$ is constructed as a disjoint union of  copies of $\str{B}$. For every copy $\widetilde{\str{B}}$ of
$\str{B}$ in $\str{C}_0$ we include a  copy $\widetilde{\str{B}}'$ of $\widetilde{\str{B}}$ in $\str{P}_0$ which projects onto $\widetilde{\str{B}}$. The copies corresponding to different $\widetilde{\str{B}}$ are disjoint (see Figure~\ref{fig:picture0}).
This indeed satisfies (i).

\item
Suppose the picture $\str{P}_k$ is already constructed.  
 Let $\str{B}_k$ be the substructure of  $\str{P}_k$ induced by $\str{P}_k$ on vertices which project to $\widetilde{\str{A}}_{k+1}$.
We use the Partite Lemma~\ref{partlem} to obtain
an $\widetilde{\str{A}}_{k+1}$-partite system $\str{D}_{k+1}$ with $\str{D}_{k+1}\longrightarrow (\str{B}_k)^{\widetilde{\str{A}}_{k+1}}_2$.
Now consider all copies of 
$\str{B}_k$ 
 in $\str{D}_{k+1}$ and extend each of these structures to a copy of $\str{P}_k$ (using free amalgamation of $\str{C}_0$-partite systems). These copies are disjoint outside $\str{D}_{k+1}$.  In this extension we preserve the parts of all the copies.
The result of this multiple amalgamation is $\str{P}_{k+1}$. Because $\str{D}_{k+1}\longrightarrow (\str{B}_k)^{\widetilde{\str{A}}_{k+1}}_2$ we know that $\str{P}_{k+1}$ satisfies (ii).

From the `Moreover' part of Lemma~\ref{partlem}, we may assume that $\str{D}_{k+1}$ satisfies (i).
Because $\str{P}_{k+1}$ is created by a series of free amalgamations, it follows that $\str{P}_{k+1}$ also satisfies (i).
\end{enumerate}
Put $\str{C} = \str{P}_b$.  It  follows easily that  $\str{C}\longrightarrow
(\str{B})^{\str{A}}_2$. Indeed, by a backward induction on $k$ one proves  that in any
$2$-colouring of ${\str{C}}\choose{\str{A}}$ there exists a copy
$\widetilde{\str{P}}_0$ of $\str{P}_0$ such that the colour of a copy of
$\str{A}$ in $\widetilde{\str{P}}_0$ depends only on its projection to $\str{C}_0$.  As this in turn induces a 
colouring of the copies of $\str{A}$ in $\str{C}_0$, we obtain a monochromatic copy
of $\str{B}$ in $\widetilde{\str{P}}_0$.
\end{proof}

\section{Ordering property}
\label{sec:ordering}
For many Ramsey classes $\K$, the class $\vv{\K}$ of free orderings of structures in $\K$ has the ordering property.
In fact, for free amalgamation classes we can give a full characterisation by means of the
following easy (and folklore) proposition.
\begin{prop}
\label{prop:freeorder}
Let $\K$ be a free amalgamation class of $L$-structures. Then $\vv{\K}$ has the 
ordering property if and only if all closures $\cl_\str{A}(u)$ of vertices are mutually isomorphic single element structures.
\end{prop}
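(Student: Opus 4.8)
The plan is to prove the two implications separately. The direction ``$\overrightarrow{\K}$ has the ordering property $\Rightarrow$ the closures $\cl_\str{A}(u)$ are mutually isomorphic one-element structures'' I would prove by contraposition, exhibiting in each way the condition can fail a \emph{canonical} linear ordering, available on every $\str{B}\in\K$, that omits some free ordering of a well-chosen $\str{A}$. The converse I would obtain by adapting the classical proof of the ordering property for free amalgamation classes of relational structures, now feeding in the Ramsey property of $\overrightarrow{\K}$ supplied by Theorem~\ref{thm:main}.

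For the contrapositive the central device is the \emph{closure preorder} on a structure $\str{B}$, where $w\preceq_{\mathrm{cl}}v$ means $w\in\cl_\str{B}(v)$; I would first record that embeddings into substructures preserve closures, $\cl_\str{B}(\phi(u))=\phi(\cl_\str{A}(u))$ for $\phi\colon\str{A}\to\str{B}$, so this preorder is respected by all copies. Then three cases. \emph{(i)} If some $\cl_\str{A}(u)$ has a strict element $w$ (that is, $w\in\cl_\str{A}(u)\setminus\{u\}$ with $u\notin\cl_\str{A}(w)$), take $\str{A}=\cl_\str{A}(u)$ with the ordering making $u$ minimal and order every $\str{B}$ by a linear extension of $\preceq_{\mathrm{cl}}$; in every copy $\phi(w)$ precedes $\phi(u)$, so this ordering of $\str{A}$ never occurs (the asymmetry $u\notin\cl_\str{A}(w)$ prevents any automorphism from collapsing it). \emph{(ii)} If some closure is non-trivial but no strict pair exists, then $w\in\cl(v)$ forces $\cl(v)=\cl(w)$, so closures partition each $\str{B}$ into disjoint rigid blocks; taking $\str{A}=\str{S}\sqcup\str{S}$ for a block $\str{S}$ with $|S|\ge2$ and the free ordering interleaving the two copies, I order each $\str{B}$ with every block contiguous, forcing the two copies into disjoint intervals and forbidding the interleaved ordering. \emph{(iii)} If all closures are one-element but two of them are non-isomorphic, take $\str{A}$ to be the free sum of the two types and order each $\str{B}$ with all vertices of the first type below all of the second, omitting the ordering that puts the second type first. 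In each case no $\str{B}$ witnesses the ordering property for that $\str{A}$.

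For the converse I assume all vertex closures are mutually isomorphic one-element structures; equivalently single vertices are substructures, pairwise isomorphic, so no unary information distinguishes vertices. Given $\str{A}$ I would choose $\overrightarrow{\str{B}}\in\overrightarrow{\K}$ containing a copy of every ordering of $\str{A}$ (by joint embedding in $\overrightarrow{\K}$) and reduce the ordering property for $\str{A}$ to producing, under an arbitrary linear order on a suitable $\str{C}$, a copy of $\str{B}$ that is \emph{monotone}, i.e.\ order-isomorphic to $\overrightarrow{\str{B}}$; such a copy automatically contains all orderings of $\str{A}$. The monotone copy I would extract by combining the structural Ramsey property of $\overrightarrow{\K}$ (Theorem~\ref{thm:main}), applied across the finitely many two-element ordered types so that each type uniformly agrees or disagrees with the given order, with a classical Erd\H{o}s--Szekeres/Ramsey argument on the order itself; here the hypothesis that all one-element structures coincide is exactly what makes the two-element base types available and their treatment uniform.

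The hard part will be this last step. A naive iteration of the structural Ramsey property alone does \emph{not} force all orderings of $\str{A}$ to appear: it only yields a copy of $\overrightarrow{\str{B}}$ on which the realized order-type depends on the reference type through a map that may fail to be surjective, and can even be constant. The real work is therefore the genuinely monotone extraction, marrying the Ramsey control of relations and functions to the order-theoretic argument; this is where mutual isomorphism of the one-element structures is used essentially, and where matters break down for non-trivial closures, as the forest example preceding Theorem~\ref{thm:ordering} illustrates. The forward direction, once the closure preorder and the three obstruction orderings are in place, is routine.
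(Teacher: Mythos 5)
Your forward direction is sound and, apart from a slightly different case decomposition, matches the paper's: the paper splits into ``two vertices with non-isomorphic closures'' (handled by ordering each $\str{B}$ so that the vertices of one closure-isomorphism-type form an initial segment) and ``all closures isomorphic but non-trivial'' (handled by noting that distinct vertex closures are disjoint, so every $\str{B}$ can be ordered with all vertex closures forming intervals, which omits any ordering of $\str{A}$ in which some closure is not an interval). Your cases (i)--(iii) cover the same negation via the closure preorder and the block partition, and the key fact you invoke --- that embeddings carry $\cl_\str{A}(u)$ onto $\cl_\str{B}(\phi(u))$ --- is exactly what makes these canonical orderings obstructions. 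This half is fine.

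The converse, however, has a genuine gap at precisely the step you flag as ``the real work.'' Making each two-element ordered type uniformly agree or disagree with the new order (by one structural Ramsey application per type) does \emph{not} yield a monotone copy of $\ostr{B}$: the pairwise flips prescribed by the several types need not cohere into a single global preservation or reversal, and an Erd\H{o}s--Szekeres argument on the order alone does not interact with the structural constraints to repair this. The paper's proof supplies the missing device. Starting from $\ostr{B}_0=\ostr{A}\sqcup\overleftarrow{\str{A}}$ (so that a globally \emph{reversed} copy still contains $\ostr{A}$ --- note the paper does not insist on an increasing copy of $\ostr{B}$ as you do), one inserts a new, relation-free vertex $n_{u,v}$ between every pair of consecutive vertices $u\leq v$ to form $\ostr{B}_1$, and then applies Theorem~\ref{thm:main} \emph{once}, to the single two-element structure $\ostr{I}$ with no relations, colouring its copies by agreement with the re-ordering. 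In the monochromatic copy of $\ostr{B}_1$, every consecutive pair $u\leq n_{u,v}\leq v$ consists of two copies of $\ostr{I}$, so transitivity through the spacer vertices propagates the single colour to force the entire copy to be order-preserved or order-reversed; this is what converts ``uniform on one pair type'' into ``monotone,'' and it uses the hypothesis of trivial, mutually isomorphic vertex closures exactly where you predicted (the spacers exist by free amalgamation and every pair involving one is a copy of $\ostr{I}$). Without this construction, or an equivalent one, your sketch does not close.
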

\begin{remark}
It does not follow from Proposition~\ref{prop:freeorder} that the class $\K$ must
have only trivial closures in order for $\vv{\K}$ to have the ordering
property.  Consider, for example, a class of structures with one binary function
$\func{}{}$ such that the images of tuples with
duplicated vertices are empty. Here all vertices are closed, but pairs of vertices have
non-trivial closures. A related example is discussed in Section~\ref{sec:steiner}.
\end{remark}
\begin{proof}
Assume that there is $\ostr{A}\in \K$ and $u,v\in \ostr{A}$ such that
$\cl_\str{A}(u)$ is not isomorphic to $\cl_\str{A}(v)$. Then one can choose an 
ordering $\ostr{A}$ of $\str{A}$ so  that the set of all vertices $v'\in
A$ such that $\cl_\str{A}(v')$ is isomorphic to $\cl_\str{A}(v)$ forms an
initial segment.  Now assume, to the contrary, that there exists $\str{B}\in
\K$ such that every ordering of $\str{B}$ contains a copy of $\ostr{A}$. This
is clearly not possible because one can choose an ordering of $\str{B}$ such that
all vertices  $u'\in B$ such that  $\cl_\str{A}(u')$ is isomorphic to
$\cl_\str{A}(u)$ forms an initial segment. This is a contradiction to $\vv{\K}$ having the  ordering property.

Now consider the case that all closures of vertices are isomorphic, but not trivial.
Because the intersection of two closures is also closed, it follows that closures of vertices are disjoint and thus it is always possible to order structures in a way that all vertex closures form
intervals.  It follows that there is no $\str{B}$ witnessing the 
ordering property for any $\ostr{A}\in \K$ which is ordered so that some
vertex closure is not an interval.

Finally assume that $\K$ is a free amalgamation class where all closures of vertices
are trivial and mutually isomorphic.  We may assume that there are no unary relations. Given $\ostr{A}\in \vv{K}$ we
construct $\ostr{B}_0$ from a disjoint copy of $\ostr{A}$ and $\vvrev{\str{A}}$ (by this we mean
a structure created from $\ostr{A}$ by reversing the linear ordering of vertices).
Now extend $\ostr{B}_0$ to $\ostr{B}_1$ by adding,  between every neighbouring pair
of vertices $u\leq_{\ostr{B}_0} v\in B_0$ a new vertex $n_{u,v}$. Extend the order so $u\leq_{\ostr{B}_1} n_{u,v}\leq_{\ostr{B}_1} v$. By free amalgamation, $\ostr{B}_1 \in \vv{\K}$.

Denote by $\ostr{I}\in \vv{\K}$ a structure consisting of two vertices $u\leq_\ostr{I} v$ and no relations or functions containing both of them besides $\leq_\ostr{I}$
(such a structure can be created by means of the free amalgamation of two copies of the unique single vertex structure in $\str{K}$ over an empty set with order completed arbitrarily and thus clearly $\ostr{I}\in \vv{\K}$).
 Find $\ostr{B}\in \vv{\K}$ with $\ostr{B}\longrightarrow(\ostr{B}_1)^{\ostr{I}}_2$ by application of Theorem~\ref{thm:main}.

We verify that $\ostr{B}$ has the desired property.
Let $\ostr{C}$ be any re-ordering of $\ostr{B}$. This re-ordering induces a  coloring of $\ostr{B}\choose \ostr{I}$: if the order of the points in some $\ostr{I}' \in {\ostr{B}\choose \ostr{I}}$ agrees with the order in $\ostr{C}$, then  color
$\ostr{I}'$ red,  and blue otherwise.  The monochromatic copy of $\ostr{B}_1$ will have the property that it is either ordered in the same way as $\ostr{B}_1$ or the order is reversed.
By construction of $\ostr{B}_0$ it follows that in both alternatives there is a copy of $\ostr{A}$.
\end{proof}

In the following we generalize the main idea of this proof (the idea of which goes back to~\cite{Nesetril1975}) to classes with
non-trivial closures of vertices.  Free orderings do not suffice anymore and we have to define carefully the admissible
orderings. 

\subsection{Admissible orderings}

\begin{defn}
\label{def:closure-components}
Let $\str{A}$ be an $L$-structure.  If $a, b \in A$ we write $a \sim_A b$ if $\cl_\str{A}(a) = \cl_\str{A}(b)$. This is an equivalence relation on $A$ and we refer to the classes as the \emph{closure-components} of $\str{A}$. The class containing $a$ will be denoted by $\Scc_\str{A}(a)$. 

If $a \in A$ we define the \emph{level} $l_\str{A}(a)$ of $a$ in $\str{A}$ inductively. We say that $l_\str{A}(a) = 0$ in the case where $\Scc_\str{A}(a) = \cl_\str{A}(a)$; otherwise $l_\str{A}(a) = l_\str{A}(b) +1$ where $b$ is a vertex of the maximal level in $\str{A}$ amongst vertices in $\cl_\str{A}(a)\setminus \Scc_\str{A}(a)$. 

We say that $\str{A} \in \K$ is a \emph{closure-extension}
at level $k$ if there is a unique closure-component $C$ of vertices of level $k$ in $\str{A}$ and $\cl_\str{A}(a) = \str{A}$ for every $a \in C$. In this case, we write $A^\circ = A \setminus C$.  Every closure of a vertex is a closure-extension.

We say that two closure-components $C$ and $C'$ of $\ostr{A}$ (or their closures) are {\em homologous} if $\cl_\str{A}(C)$ and $\cl_\str{A}(C')$ are isomorphic and this isomorphism is an identity on $\cl_\str{A}(C)\setminus C=\cl_\str{A}(C')\setminus C'$. Note that if $C \neq C'$, then $\cl_\str{A}(C)\setminus C = \cl_\str{A}(C)\cap \cl_\str{A}(C')$ is closed in $\str{A}$.
\end{defn}
\begin{example}
Consider the class $\mathcal T$ of forests represented by a single unary function (the predecessor relation) described in Section~\ref{sec:orderingpropintro}.
In this class the closure of a vertex is the path to a root vertex. Every vertex thus forms a trivial closure component
and its level is determined by the distance to the root vertex.
In this particular case a structure is a closure-extension if and only if
$\str{A}$ is an oriented path (that is structure on vertex set $\str{A}=\{v_1,v_2,\ldots, v_n\}$ and
$\func{A}{}(v_i)=v_{i+1}$ for every $1\leq i<n$).

\medskip

\begin{figure}
\centering
\includegraphics{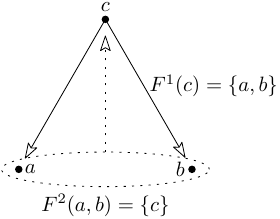}
\caption{Example of a closure-extension $\str{A}$ where $A^\circ$ is not a substructure.}
\label{notclosedfig}
\end{figure}
Observe also that for a closure-extension $\str{A}$ (depicted in Figure~\ref{notclosedfig}), the set of vertices $A^\circ$ is not necessarily closed.
Consider a language with a function $\func{}{1}$ arity 1 a 
function $\func{}{2}$ of arity 2. The structure $\str{A}$ with $A=\{a,b,c\}$,
$\func{A}{1}(c)=\{a,b\}$, $\func{A}{2}(a,b)=\{c\}$ is a closure-extension of level 1: $l(a)=l(b)=0$ and $l(c)=1$, the set $A^\circ$ is $\{a,b\}$, however $\cl_\str{A}\{a,b\}=\{a,b,c\}$.
\end{example}

Suppose $\str{A}, \str{B} \in \K$ are  closure-extensions and $\vv{\str{A}}, \vv{\str{B}} \in \vv{\K}$ are  orderings. We say that these are \emph{similar} if there is an isomorphism $\alpha:\str{A} \to \str{B}$ which is also order-preserving when seen as a mapping $\alpha:A^{\circ}\to B^{\circ}$. This is an equivalence relation and in our admissible orderings, we choose a fixed representative from each similarity-type of ordered closure-extension.

In what follows we shall assume that we have fixed some total ordering $\ostr{A} \trianglelefteq \ostr{B}$ between isomorphism types of orderings of closure-extensions such that
\begin{enumerate}[label=S\arabic*]
 \item $\lvert A\rvert <\lvert B\rvert$ implies $\ostr{A} \trianglelefteq \ostr{B}$.
\end{enumerate}
 (In particular, $\trianglelefteq$ is a well ordering.)

First we define a preorder of vertices which we will later refine to a linear order.
Given two vertices $u\neq v\in \ostr{A}$ we write $u\preccurlyeq_\ostr{A} v$ if one of the following holds:
\begin{enumerate}[label=P\arabic*]
 \item $\cl_{\ostr{A}}(u)\trianglelefteq \cl_{\ostr{A}}(v)$ and they are not isomorphic;
 \item $\cl_{\ostr{A}}(u)$ is isomorphic to $\cl_{\ostr{A}}(v)$ but $\cl_{\ostr{A}}(u)\setminus \Scc_{\ostr{A}}(v)$ is lexicographically before $\cl_{\ostr{A}}(v)\setminus \Scc_{\ostr{A}}(v)$ considering the order $\leq_\ostr{A}$;
 \item $\cl_{\ostr{A}}(u)$ and $\cl_{\ostr{A}}(v)$ are homologous closure-extensions. 
\end{enumerate}

Note that this is indeed a preorder on the vertices of $\str{A}$. We can now describe our class of admissible orderings.

\begin{defn}\label{def:admissible} Suppose $\K$ is a free amalgamation class. We say that $\mathcal O \subseteq \vv{\K}$ is a \emph{class of admissible orderings} of structures in $\K$ if  the following conditions hold.
\begin{enumerate}[label=A\arabic*]
\item \label{o:extend} If $\str{A} \in \K$, then there is some ordering $\leq_\ostr{A}$ of $\str{A}$ in $\mathcal O$.
\item \label{o:substructures} $\mathcal O$ is closed for substructures.
\item \label{o:rootorder}
For every $\ostr{A}\in \mathcal O$, the ordering $\leq_\ostr{A}$ refines $\preccurlyeq_\ostr{A}$.
\item \label{o:interval}
For every $\ostr{A}\in \mathcal O$, the closure-components form linear intervals in $\leq_\ostr{A}$.
\item \label{o:extend2}
For every $\str{B}\in \K$, if  $\str{A}_1,\str{A}_2,\ldots, \allowbreak \str{A}_n$ is a family of substructures and  $\leq$ is a linear order of
$A=\cup_{1\leq i\leq n} A_i$ such that
\begin{enumerate}
\item $\leq $ satisfies the conclusions of \ref{o:rootorder} and~\ref{o:interval}; and
\item each substructure of $\str{B}$ contained in $A$ is admissibly ordered by $\leq$;
\end{enumerate}
then there exists $\ostr{B}\in \mathcal O$ such that $\leq_{\ostr{B}}$ restricted to $A$ is $\leq$.
\item \label{o:unique} 
Suppose that $\ostr{A}, \ostr{B} \in \mathcal O$ are similar ordered closure-extensions. Then $\ostr{A}$ is isomorphic to $\ostr{B}$.
\end{enumerate}
\end{defn}

\begin{example}
Consider $\str{A}\in \mathcal T$ where $\mathcal T$ is the class of forests discussed
in Section~\ref{sec:orderingpropintro}. Because the closure-extensions are all formed by oriented
paths, the order $\trianglelefteq$ requires vertices to be ordered according to their levels (in particular all root vertices come first and can be ordered arbitrarily).
The sons of a vertex $v\in \str{A}$  are trivial homologous components and thus they are required
to always form an interval and the order amongst these intervals is given by order of their
fathers.

There is some flexibility in the definition of admisibility. For example
it is possible to order forests in a way that every tree (and recursively every subtree) forms an interval.
The particular choice is however not very important as it can be shown that they are all equivalent up
to bi-definability (this follows as $\mathcal O$ is a Ramsey class, see \cite{Kechris2005}).
\end{example}
\begin{prop} \label{prop:ordexists} Suppose $\K$ is a free amalgamation class. Then there is a class $\mathcal O$ of admissible orderings of $\K$. 
\end{prop}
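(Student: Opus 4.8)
The plan is to build $\mathcal O$ explicitly and then verify the six conditions of Definition~\ref{def:admissible}, with the extension property~\ref{o:extend2} being the only real work. First I would fix, by recursion along the well-order $\trianglelefteq$, a \emph{canonical} ordering of every closure-extension. Suppose $\str{A}\in\K$ is a closure-extension at level $k$ with top closure-component $C$ and $A^\circ=A\setminus C$, and suppose we are given an admissible ordering of the lower-level part $A^\circ$. Since every vertex of $A^\circ$ has closure strictly contained in $\cl_\str{A}(C)=\str{A}$, condition~\ref{o:rootorder} together with S1 forces all of $A^\circ$ to precede $C$; thus the only freedom is the internal order of $C$, which $\preccurlyeq_\ostr{A}$ leaves entirely unconstrained, since all vertices of $C$ share the closure $\str{A}$ (so P1 and P2 fail and P3 holds both ways). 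Among all orderings of $C$ I would then select the one making $\ostr{A}$ the $\trianglelefteq$-least ordered closure-extension of its isomorphism type over the given ordering of $A^\circ$. As this choice depends only on the isomorphism type of $\str{A}$ and the order on $A^\circ$, it is constant on similarity classes. I then let $\mathcal O$ consist of all orderings $\ostr{A}$ of structures $\str{A}\in\K$ that refine $\preccurlyeq_\ostr{A}$, in which every closure-component is an interval, and whose restriction to each closure-extension substructure agrees with this canonical ordering.

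With this definition, \ref{o:rootorder} and~\ref{o:interval} hold by fiat, and~\ref{o:unique} holds because two similar members of $\mathcal O$ order their top components by the same canonical rule and agree below, hence are isomorphic. For~\ref{o:substructures} I would use that substructures are taken on closed sets, so for a vertex $v$ of a substructure $\str{A}'\le\str{A}$ one has $\cl_{\str{A}'}(v)=\cl_\str{A}(v)$; consequently closure-components, levels, the preorder $\preccurlyeq$, and the canonical orderings of closure-extensions are all inherited, and an interval stays an interval after restriction. Existence~\ref{o:extend} I would obtain by induction on $\lvert A\rvert$, ordering the closure-components by level and, within a level, by $\preccurlyeq_\ostr{A}$, breaking the remaining ties (coincident or homologous closures) by the canonical rule.

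The main obstacle is the extension property~\ref{o:extend2}. Here I am given $\str{B}\in\K$, substructures $\str{A}_1,\dots,\str{A}_n$ with $A=\bigcup_i A_i$, and a linear order $\leq$ on $A$ that already refines $\preccurlyeq$, has its closure-components as intervals, and orders every closed substructure contained in $A$ admissibly; I must extend $\leq$ to an admissible ordering of all of $\str{B}$ restricting to $\leq$ on $A$. I would run the same bottom-up, level-by-level insertion used for~\ref{o:extend}, now threading the closure-components of $B\setminus A$ into the prescribed order on $A$: each such component is placed at its $\preccurlyeq_\ostr{B}$-determined location, as a single interval, with its internal order given by the canonical rule. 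Three points are delicate. First, $\preccurlyeq_\ostr{B}$ is itself partly order-dependent through P2, which compares isomorphic closures lexicographically via $\leq_\ostr{B}$; I would arrange the construction so that these comparisons only ever reference positions already fixed, which a careful bottom-up pass achieves and which also keeps the whole recursion well-founded. Second, I must check that the new placements never contradict the given $\leq$ on $A$: hypotheses (a) and (b) of~\ref{o:extend2} say precisely that $\leq$ already obeys \ref{o:rootorder}, \ref{o:interval}, and the canonical rule on the closed pieces of $A$, so there is no conflict. Third, I would verify that the result lies in $\K$ and is genuinely admissible; here freeness of the amalgamation is essential, since it forces the closures of distinct vertices of $B$ to overlap only in shared closed subsets, so each closure-component can be slotted in as a single interval without disturbing the interval structure of the others.

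Assembling these checks produces $\ostr{B}\in\mathcal O$ whose order restricted to $A$ is $\leq$, which establishes~\ref{o:extend2} and completes the construction of the class of admissible orderings. I expect the bulk of the technical effort, and the only genuinely subtle reasoning, to be in resolving the order-dependence of $\preccurlyeq$ through P2 during the insertion step of~\ref{o:extend2}; the remaining conditions reduce to bookkeeping about closures and intervals, made routine by the fact that substructures are closed.
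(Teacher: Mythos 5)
Your construction is correct and rests on the same two observations as the paper's: the only genuine freedom in an admissible ordering is the internal order of each top closure-component, which must be pinned down once per similarity class, and all other comparisons are forced by $\preccurlyeq$ (via S1 and P1--P3) from strictly lower levels. Where you differ is in organization: the paper defines $\mathcal O$ by a greedy induction on the number of vertices (accept $\ostr{C}$ iff it satisfies \ref{o:rootorder} and \ref{o:interval}, all proper substructures were accepted, and no similar but non-isomorphic closure-extension was accepted earlier), and then verifies \ref{o:extend2} by a minimal-counterexample argument split into three cases (the last case peeling off a proper closure-extension $\str{C}\subseteq\str{B}$ with $C\not\subseteq A$); you instead fix canonical representatives of ordered closure-extensions up front by recursion along $\trianglelefteq$, define $\mathcal O$ by purely local conditions, and prove \ref{o:extend2} by a direct bottom-up insertion. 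Your version buys a more explicit, constructive description of $\mathcal O$ and a verification of \ref{o:unique} and \ref{o:substructures} that is genuinely ``by fiat''; the paper's version avoids having to argue that the canonical choice is consistent across overlapping closure-extensions. One point you should make explicit in the insertion step: since each $\str{A}_i$ is a substructure (hence closed), every $v\in A$ has $\cl_{\str{B}}(v)\subseteq A$, so a closure-component of $\str{B}$ is either disjoint from $A$ or has its entire closure inside $A$ (where hypothesis (b) then applies). This is what rules out the bad scenario of a partially ordered top component whose given order on $C\cap A$ extends to no canonical order of $C$, and it is the same fact that silently drives the paper's case analysis. With that noted, your argument goes through.
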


\begin{proof}
We proceed by induction on $\vv{\K}$ ordered arbitrarily in order of increasing number of vertices. In this order, for every $\ostr{C}\in \vv{\K}$ we decide
if $\ostr{C}\in \mathcal O$ or not by a variant of a greedy algorithm. In the induction step assume that we already decided the presence in  $\mathcal O$ for every proper substructure
of $\ostr{C}$. 

We put $\ostr{C}\in \mathcal O$ if and only if:
\begin{enumerate}[label=O\arabic*]
 \item \label{cond1} $\ostr{C}$ satisfies \ref{o:rootorder} and \ref{o:interval},
 \item \label{cond2} every proper substructure of $\ostr{C}$ is in $\mathcal O$, and,
 \item \label{cond3} if $\ostr{C}$ is an ordered closure-extension, then there is no similar but non-isomorphic $\ostr{D}\in \mathcal O$.
\end{enumerate}
This finishes the description of $\mathcal O$.  We verify that the conditions of Definition~\ref{def:admissible} are satisfied.

\medskip

First we check \ref{o:extend2}. 
Assume, to the contrary, that there are  $\str{B}\in \K$, substructures $\str{A}_1$, $\str{A}_2, \ldots, \str{A}_n$ and a linear order $\leq$ on $A=\cup_{1\leq i\leq n} A_i$ without such an extension. From all counter-examples choose one minimizing $\lvert B\rvert$ and among those, minimize $\lvert B\lvert - \lvert A\lvert$.

We consider three cases:
\begin{enumerate}
\item Suppose $\str{B}$ is not a closure-extension and $B=A$. It follows that the order $\leq$ satisfies both~\ref{cond1} and~\ref{cond2} and thus we can put $\leq_{\ostr{B}}=\leq$ and obtain $\ostr{B}\in \mathcal O$, a contradiction.
\item Suppose $\str{B}$ is a closure-extension and $B^\circ=A$.
Extend $\leq$ to an order of $\ostr{B}$ in a way it satisfies \ref{cond1}.
In this case $\ostr{B}$ satisfies \ref{cond2} because no proper substructure contains $B\setminus B^\circ$. Furthermore, we may assume that \ref{cond3} holds. This is a contradiction to $\ostr{B}\notin \mathcal O$.
\item Suppose neither of the previous cases apply. Then there is a proper closure-extension $\str{C} \subseteq\str{B}$ such that $C\not \subseteq A$.  Denote by $\leq'$ the 
order $\leq$ restricted to $C \cap A$ and consider the structures $\str{C}_i=\str{A}_i\cap \str{C}$, $1\leq i\leq n$.  From the minimality of the counter-example
and because $\lvert C\rvert <\vert B\rvert$ we know that order of $\leq'$ can be extended to an admissible order $\ostr{C}$ of $\str{C}$.
Now extend to $C \cup A$ the orders $\leq$ and $\leq_\ostr{C}$ in such a way that \ref{o:rootorder} and~\ref{o:interval} are satisfied.  This combined
order along with the family of substructures $\str{C}$, $\str{A}_1$, $\str{A}_2, \ldots, \str{A}_n$ contradicts the second assumption about the minimality of
the counter-example.  
\end{enumerate}
This finishes proof of~\ref{o:extend2}.

\medskip
Condition \ref{o:extend} is implied by \ref{o:extend2}.
Conditions \ref{o:substructures}, \ref{o:rootorder}, \ref{o:interval} and \ref{o:unique} follows directly from the construction of $\mathcal O$.
\end{proof}

\subsection{Proof of Theorem~\ref{thm:ordering}}

Advancing the proof of Theorem~\ref{thm:ordering} we show two lemmas which generalize
the main ideas of proof of Proposition~\ref{prop:freeorder}.

\begin{lem}
\label{lem:type}
Let $\mathcal R$ be a Ramsey class of ordered structures.
Then for every $\ostr{A}\in \mathcal R$ there exists $\ostr{B}\in \mathcal R$ such that
every re-ordering $\ostr{C}\in \mathcal R$ of $\ostr{B}$ contains a re-ordering $\widetilde{\str{A}}$ of $\ostr{A}$ such that every two
isomorphic substructures of $\ostr{A}$ are also isomorphic in the order of $\widetilde{\str{A}}$.
\end{lem}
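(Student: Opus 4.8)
The plan is to read ``isomorphic substructures of $\ostr{A}$'' as substructures of the \emph{ordered} structure $\ostr{A}$ that are isomorphic \emph{as ordered structures}; this is the meaningful reading, since for the unordered reading the statement already fails, e.g.\ inside the (Ramsey) class of all finite ordered structures with one binary relation, taking $\ostr{A}$ to be a directed $3$-cycle: its three $2$-element substructures are isomorphic as unordered structures, yet no linear order of three points orients all three directed edges consistently. So, writing $\str{D}_1,\str{D}_2$ for two substructures of $\ostr{A}$ of the same ordered isomorphism type, I want to produce a single re-ordered copy $\widetilde{\str{A}}$ inside $\ostr{C}$ in which any such $\str{D}_1,\str{D}_2$ remain order-isomorphic. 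Generalising the two-point colouring in the proof of Proposition~\ref{prop:freeorder}, the idea is to treat the re-ordering $\ostr{C}$ as a colouring: each copy of an ordered substructure type of $\ostr{A}$ acquires from $\le_{\ostr{C}}$ one of finitely many order types, and a Ramsey argument produces a copy of $\ostr{A}$ on which this induced order type is constant on each substructure type.

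In detail, I would first list the finitely many ordered isomorphism types $\ostr{E}_1,\ldots,\ostr{E}_r$ of substructures of $\ostr{A}$, and for each $i$ let $k_i$ be the finite number of orderings of the reduct $\str{E}_i$ lying in $\mathcal R$, so $k_i\le \lvert E_i\rvert!$. Iterating the Ramsey property of $\mathcal R$ (set $\ostr{B}_0=\ostr{A}$ and choose $\ostr{B}_j\in\mathcal R$ with $\ostr{B}_j\longrightarrow(\ostr{B}_{j-1})^{\ostr{E}_j}_{k_j}$, peeling the types off one at a time), I obtain a single $\ostr{B}=\ostr{B}_r$ with the property that, given any colourings of the sets ${\ostr{B}\choose\ostr{E}_i}$ with $k_i$ colours respectively, there is a copy $\ostr{A}'\in{\ostr{B}\choose\ostr{A}}$ that is monochromatic on ${\ostr{A}'\choose\ostr{E}_i}$ \emph{simultaneously} for every $i$. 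This is the standard nested application of the arrow relation: one first makes the last type constant on a copy of $\ostr{B}_{r-1}$, then works inside that copy, and so on. Crucially all the colourings will come from a single re-ordering $\ostr{C}$, so finitely many nested applications suffice.

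Now, given a re-ordering $\ostr{C}\in\mathcal R$ of $\ostr{B}$, I define for each $i$ a colouring of ${\ostr{B}\choose\ostr{E}_i}$ sending a copy with vertex set $E'$ to the isomorphism type of $(\str{B}{\restriction}E',\le_{\ostr{C}})$; being a substructure of $\ostr{C}\in\mathcal R$, this is an ordering of $\str{E}_i$ in $\mathcal R$, so there are at most $k_i$ colours. Applying the above, I get an order-preserving copy $\ostr{A}'\cong\ostr{A}$ in $\ostr{B}$ that is monochromatic for every $\ostr{E}_i$, and I set $\widetilde{\str{A}}=(\str{A}',\le_{\ostr{C}})$, a re-ordering of $\ostr{A}$ contained in $\ostr{C}$. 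If $\str{D}_1,\str{D}_2$ are substructures of $\ostr{A}$ of common ordered type $\ostr{E}_i$, their images in $\ostr{A}'$ are copies of $\ostr{E}_i$ and hence receive the same $\ostr{C}$-order type; that is, they are order-isomorphic in $\widetilde{\str{A}}$, as required.

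The main obstacle is organising the argument so that a single copy of $\ostr{A}$ is canonical for all substructure types at once, which is precisely what the nested use of the Ramsey arrow delivers; the only genuine care needed is the bookkeeping of the two orders --- the fixed order of $\ostr{B}$, used to identify the copies being coloured, versus the adversarial order of $\ostr{C}$, which supplies the colours. A secondary point is to record the reading of the statement noted above (order-isomorphism in $\ostr{A}$, not unordered isomorphism), and that restrictions of structures in $\mathcal R$ again lie in $\mathcal R$, which is what makes the induced colours legitimate types in $\mathcal R$.
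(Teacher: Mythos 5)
Your proposal is correct and follows essentially the same route as the paper's proof: colour the copies of each (ordered) substructure type of $\ostr{A}$ by the order type induced by the adversarial re-ordering, apply the Ramsey arrow to obtain a copy of $\ostr{A}$ on which this colour is constant, and iterate over the finitely many substructure types. Your preliminary remark pinning down the intended reading (order-isomorphism rather than unordered isomorphism) is a useful clarification but does not change the argument.
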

\begin{proof}
Let $\ostr{A}$ be a structure and $\ostr{A}_0$ a substructure of $\ostr{A}$. Denote by $n$ the number of possible re-orderings of $\ostr{A}_0$ (i.e. $n=\lvert A_0\lvert !$).
 By the Ramsey property construct $\ostr{B}_0$ such that $\ostr{B}_0\longrightarrow (\ostr{A})^{\ostr{A}_0}_n$. Every re-ordering of $\ostr{B}_0$ induces an $n$-coloring of copies of $\ostr{A}_0$ in $\ostr{B}_0$
and so by the Ramsey property there exists a copy $\widetilde{\str{A}}$ of re-ordered $\ostr{A}$ in $\ostr{B}_0$ having
the property that all copies of $\ostr{A}_0$ in $\widetilde{\str{A}}$ are
ordered the same way. The statement follows by iterating this argument for every substructure
of $\ostr{A}$.
\end{proof}
\begin{lem}
\label{lem:homologous}
Let $\mathcal K$ by a free amalgamation class.  Then for every $\ostr{A}\in \vv{\K}$
in which  every closure-component forms an interval (in the order of $\ostr{A}$) there exists $\ostr{B}$ such that 
every re-ordering $\ostr{C}$ of $\ostr{B}$ where every closure-component forms an interval (in the order of $\ostr{C}$) contains a copy of a re-ordering of $\ostr{A}$
where the order between vertices in distinct homologous closure-components is preserved.
\end{lem}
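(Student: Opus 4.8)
The plan is to imitate the proof of Proposition~\ref{prop:freeorder}, using Theorem~\ref{thm:main} as the Ramsey engine, but to replace the single global order-reversal used there by the local symmetry between homologous closure-components. The key observation is that if $C,C'$ are homologous then there is an isomorphism $\cl_\str{A}(C)\to\cl_\str{A}(C')$ fixing their common base $\cl_\str{A}(C)\setminus C=\cl_\str{A}(C')\setminus C'$ pointwise; hence the \emph{bare} pair, i.e. the free amalgamation of the two closures over this base, admits an automorphism interchanging $C$ and $C'$. This is the analogue of the self-reverse pair $\ostr{I}$ used in Proposition~\ref{prop:freeorder}: the relative order of a bare homologous pair can be flipped without leaving its isomorphism type, and so can be tracked by a two-colouring ``the order in $\ostr{C}$ agrees with, or disagrees with, the reference order of $\ostr{B}$'' to which Theorem~\ref{thm:main} applies.

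First I would enumerate the finitely many isomorphism types $\str{I}_1,\dots,\str{I}_m$ of bare homologous pairs occurring in $\ostr{A}$, noting that each homology class has a well-defined such type. For each $\sigma\in\{+,-\}^m$ let $\ostr{A}_\sigma$ be the re-ordering of $\ostr{A}$ which reverses the order between the components of each homology class of type $j$ precisely when $\sigma_j=-$ (keeping every closure-component an interval); each $\ostr{A}_\sigma$ still lies in $\overrightarrow{\K}$. I would then take $\ostr{B}_0$ to be the free amalgamation (disjoint union) of all the $\ostr{A}_\sigma$, and form $\ostr{B}_1$ by inserting, between consecutive members of each homology class, an extra closure-component homologous to them and freely amalgamated with everything else. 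These inserted components are the exact analogue of the separating vertices $n_{u,v}$ of Proposition~\ref{prop:freeorder}: since any two homologous components are now joined through separators to which they are freely amalgamated, the relative order of \emph{every} pair of homologous components---even one carrying relations between its two members, which is then not itself a copy of a symmetric $\str{I}_j$---is pinned down transitively by copies of the $\str{I}_j$. Finally I would apply Theorem~\ref{thm:main} once per type, iterating the Ramsey arrows, to obtain $\ostr{B}$ with $\ostr{B}\longrightarrow(\ostr{B}_1)^{\str{I}_j}_2$ for every $j$ simultaneously.

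Given a re-ordering $\ostr{C}$ of $\ostr{B}$ in which every closure-component is an interval, each type $j$ yields a two-colouring of the copies of $\str{I}_j$ in $\ostr{B}$: colour a copy according to whether the $\ostr{C}$-order of a fixed pair of representative vertices, one from each of its two components, agrees with their $\ostr{B}$-order. The iterated Ramsey arrows then produce a copy of $\ostr{B}_1$ that is monochromatic for all types, with colour $\tau_j\in\{+,-\}$ for type $j$. Because the closure-components are intervals in $\ostr{C}$, the representative order equals the component order, so monochromaticity forces each homology class of type $j$ to appear in $\ostr{C}$ in its $\ostr{B}_1$-order when $\tau_j=+$ and exactly reversed when $\tau_j=-$. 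Restricting this copy to the block $\ostr{A}_\tau$ and discarding the separators, the two reversals cancel, so in $\ostr{C}$ every homology class of $\ostr{A}_\tau$ appears in the original order of $\ostr{A}$; thus $\ostr{A}_\tau$, ordered by $\ostr{C}$, is a re-ordering of $\ostr{A}$ preserving the order between vertices of distinct homologous closure-components.

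The hard part is exactly to make this two-colouring genuinely control the order of homologous components. Two issues must be handled. Homologous components may carry relations to one another, so such a pair need not be a copy of the symmetric $\str{I}_j$; this is what the freely amalgamated separators are for, since the pair is then ordered transitively through them. More delicately, one must ensure that the components one wishes to order really behave like intervals: the colouring is phrased through representative vertices precisely so that it is well defined on every copy of $\str{I}_j$ in $\ostr{B}$, and the interval hypothesis on $\ostr{C}$ is what upgrades control of the representatives to control of the whole components and guarantees that ``every pair of a class reversed'' yields the genuine reverse linear order. Verifying that the closure structure of the Ramsey object $\ostr{B}$ does not disrupt this---so that the homology classes built into $\ostr{B}_1$ survive inside $\ostr{B}$ as units detectable by the $\str{I}_j$---is the technical heart of the argument.
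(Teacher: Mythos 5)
Your proposal is correct and follows essentially the same route as the paper's proof: the freely amalgamated ``bare pair'' $\str{I}_j$ playing the role of the symmetric pattern structure, the inserted homologous separator components pinning down transitively the order of homologous components that carry relations between them, and the inclusion of both orderings (your $\ostr{A}_\sigma$ for $\sigma\in\{+,-\}^m$ versus the paper's amalgam of $\ostr{A}_1$ with its reversal $\overleftarrow{\str{A}}_1$) so that either colour of the monochromatic configuration yields a good copy. The only differences are organisational --- you handle all homology types at once via a product of sign patterns and a disjoint union, where the paper treats one pair of homologous components at a time and iterates --- and the ``technical heart'' you flag at the end is not actually problematic, since copies of $\ostr{B}_1$ in $\ostr{B}$ are closed substructures, so closures and hence homology classes computed inside them agree with those computed in $\ostr{B}$.
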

\begin{proof}
For simplicity we show the construction for a given $\ostr{A}$ and two
distinct homologous closure-components $C_1\leq_{\ostr{A}} C_2$.  The full statement
can be shown by iterating the argument for every such pair. The construction is schematically depicted in Figure~\ref{constructionfig}.
\begin{figure}
\centering
\includegraphics{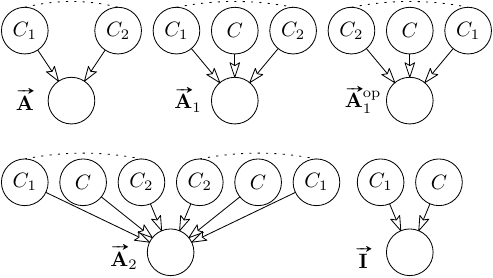}
\caption{Construction used in the proof of Lemma~\ref{lem:homologous}. Pairs of closure components which are not necessarily free are connected by dotted lines.}
\label{constructionfig}
\end{figure}

First observe that $C_1$ and $C_2$ are not in the closure of $\cl_\str{A}(C_1)\setminus C_1 = \cl_\str{A}(C_2)\setminus C_2$
because this would imply  $C_1=C_2$.

Now, by a free amalgamation construct $\ostr{A}_1$ extending $\ostr{A}$ with a new closure-component $C$ homologous to both $C_1$ and $C_2$
where the  linear order is extended so that $C_1\leq_\ostr{A} C\leq_\ostr{A} C_2$. 
Denote by $\vvrev{\str{A}}_1$ the structure created from $\ostr{A}_1$ by re-ordering closure-components $C_1$, $C$ and $C_2$ so that
$C_2\leq_{\vvrev{\str{A}}_1} C\leq_{\vvrev{\str{A}}_1} C_1$.
Denote by $\ostr{A}_2$ the free amalgamation of $\ostr{A}_1$ and $\vvrev{\str{A}}_1$ over $\cl_\str{A}(C_1)\setminus C_1$. Because $\K$ is a free amalgamation class we have
$\ostr{A}_1, \vvrev{\str{A}}_1, \ostr{A}_2\in \vv{\K}$.

Note that the addition of $C$ into $\str{A}_1$ is necessary only when $\cl(\ostr{A}(C_1\cup C_2))$ is not a result of free amalgamation of two copies of $\cl_{\ostr{A}(C_1)}$.
In general there may be relations spanning $C_1$ and $C_2$ which would make use of the Ramsey argument below impossible.

Put $\ostr{I}=\cl_{\ostr{A}_1} (C_1\cup C)$ and use Theorem~\ref{thm:main}  to find $\ostr{B} \in \vv{\K}$ containing $\ostr{A}_2$ such that $\ostr{B}\longrightarrow (\ostr{A}_2)^\ostr{I}_2$. 
Now every re-ordering $\ostr{C}$ of $\ostr{B}$ induces a $2$-coloring of the  copies of $\ostr{I}$ in $\ostr{B}$ which in turn leads to
the existence of a copy of a re-ordering of $\ostr{A}$ where the order of $C_1$ and $C_2$ is preserved.
\end{proof}
Now we are finally ready to prove the main result of this section.

\begin{proof}[Proof of Theorem~\ref{thm:ordering}]
Given $\ostr{A}\in \mathcal O$ we construct $\ostr{B}\in \vv{\K}$ with $\ostr{A}$ as a substructure 
such that every admissible re-ordering $\ostr{C}\in \mathcal O$ of $\ostr{B}$
contains a copy of $\ostr{A}$.

By application of Lemmas~\ref{lem:homologous} and \ref{lem:type} it is enough to construct  an admissibly ordered $\ostr{B}_0$ such that  that every re-ordering $\ostr{C}_0\in \mathcal O$ of $\ostr{B}_0$ with the following two properties contains a copy of $\ostr{A}$:
\begin{enumerate}
\item[(a)] the order of vertices in distinct  homologous components is preserved; 
 \item[(b)]  the order in $\ostr{C}_0$ on substructures which are closures of vertices depends only on their isomorphism type in $\ostr{B}_0$.
\end{enumerate}

Given $\ostr{A}$ denote by $\ostr{A}_1$, $\ostr{A}_2$,\ldots, $\ostr{A}_n$ all admissible re-orderings of $\ostr{A}$ 
having properties (a) and (b).  So these structures all have the same domain, and only differ in their orderings.
 Put $\ostr{B}_0$ to be the disjoint union of  $\ostr{A}_1$, $\ostr{A}_2$,\ldots, $\ostr{A}_n$ with the order completed arbitrarily. Let  $\ostr{C}_0$ be an admissible reordering of $\ostr{B}_0$ satisfying (a) and (b). Denote by $\alpha_i : \ostr{A} \to \ostr{C}_0$ the map which sends $a \in A$ to $a \in A_i$, for $i \leq n$. It is enough to prove the following.

\medskip

\noindent\textit{Claim:\/} For all $k\geq 0$,  there is some $i \leq n$ such that  the map $\alpha_i$ preserves the ordering on vertices in $\ostr{A}$ of level at most $k$.

\medskip

We prove this by induction on $k$. Denote by $A\vert_k$ the set of vertices in $A$ at level $\leq k$ (this is not necessarily a substructure or even a structure in $\mathcal{K}$, but it still makes sense to discuss admissible orderings of it as it contains the closures of all of its vertices, and the criteria for being an admissible ordering depend only on these). By setting $A\vert_{-1} = \emptyset$, we can incorporate the proof for the base case $k = 0$ into the general argument. 

\medskip

\noindent\textit{Step 1:\/} Every admissible ordering of vertices in $A\vert_{k-1}$ satisfying (a) and (b) extends to one of $A\vert_{k}$. If $\str{X}$ is the closure of a vertex of level $k$ in $\str{A}$, then any two such closure-extensions differ on $\str{X}$ by a permutation in $\Aut(\str{X}/ X^\circ)$ (that is, automorphisms of $\str{X}$ fixing all vertices of level less than $k$). 

Indeed, we need only to say how to define the ordering on $\str{X}$. But, given the ordering on $X^\circ$, this is determined by condition~\ref{o:unique} in Definition~\ref{def:admissible}, up to the action of $\Aut(\str{X}/ X^\circ)$. It is easy to see that the resulting ordering on $A\vert_{k}$ is admissible.

\medskip

\noindent\textit{Step 2:\/} Suppose the claim holds up to level $k-1$. Let $I$ denote the set of $i \leq n$ for which $\alpha_i$ restricted to $A\vert_{k-1}$ is order-preserving. So this is non-empty. Let $i \in I$ and let $\str{X}$ be as in Step 1. 

There is $\beta_i \in \Aut(\str{X}/ X^\circ)$ such that $\alpha_i\circ \beta_i$ preserves the ordering on $\str{X}$ and so is an isomorphism between $\ostr{X}$, the structure on $\str{X}$ in $\ostr{A}$ and the substructure $\alpha_i(\str{X})$ in $\ostr{C}_0$. By Step 1, there is some $j \leq n$ such that the map $\beta_i^{-1}$, regarded as a map from $\ostr{A}_i$ to $\ostr{A}_i$ (in $\ostr{B}_0$) is order-preserving and all vertices in $A\vert_{k-1}$ have the same ordering in $\ostr{A}_i$ and $\ostr{A}_j$.  By condition (b), it follows that this map between the corresponding subsets of $\ostr{C}_0$ is order-preserving (as all orderings are determined by what happens in closures of vertices). Thus $j \in I$ and $\alpha_j$ is order-preserving on $X$. Repeating this argument for other vertices at level $k$, we complete the inductive step.

We have verified that the class of admissible orderings $\mathcal O$ has the ordering property. The Ramsey property follows from Theorem~\ref{thm:main}
and \ref{o:extend2}: Given $\ostr{A},\ostr{B}\in \mathcal O$ and $\ostr{C}\in \vv{\K}$ such that $\ostr{C}\longrightarrow(\ostr{B})^\ostr{A}_2$.
Construct a new order $\leq$ of $\ostr{C}$ in a way that $\leq$ agrees with $\leq_\str{C}$ on every copy of $\ostr{B}$ in $\ostr{C}$. Complete $\leq$ so it satisfies
\ref{o:rootorder} and \ref{o:interval}. By \ref{o:extend2} it follows that $\str{C}$ ordered by $\leq$ is in $\mathcal O$.
\end{proof}

\section{Irreducible substructure faithful EPPA}
\label{sec:EPPA}

The proof of Theorem~\ref{EPPA} is a variant of the proof of clique-faithful EPPA by Hodkinson and Otto~\cite{hodkinson2003}
strengthened for languages with unary functions.  As in~\cite{hodkinson2003} our starting point is the following construction giving EPPA for relational
structures and we verify coherency as in~\cite{Siniora2, Siniora}.

\begin{thm}[Herwig \cite{Herwig1995}, coherency verified by Solecki~\cite{solecki2009}]
\label{thm:Herwig}
For any relational language $L$, the class of all finite $L$-structures has coherent EPPA.
\end{thm}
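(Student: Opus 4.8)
The plan is to recast the extension problem in group-theoretic terms and to produce the EPPA-extension as a finite quotient of a ``universal'' infinite extension on which a group acts by automorphisms; coherence will then come essentially for free from the fact that the extending automorphisms arise from a single group homomorphism. First I would organize the data. Let $\mathcal P$ be the (finite) set of partial automorphisms of $\str{A}$ and let $F$ be the free group on a generating set $\{x_p : p\in \mathcal P\}$. Each generator $x_p$ acts as the partial bijection $p$ on the vertex set $A$, giving a \emph{partial} action of $F$ on $A$. The goal is a finite group $G$, a finite $L$-structure $\str{B}$ having $\str{A}$ as a substructure, an action of $G$ on $\str{B}$ by automorphisms, and a homomorphism $\rho\colon F\to G$ such that $\rho(x_p)$ restricts to $p$ on $A$ for each $p$; then $\hat p := \rho(x_p)\in \Aut(\str{B})$ extends $p$ and $\str{B}$ is an EPPA-extension.

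Next I would build the universal object. Form the infinite $L$-structure $\str{M}$ whose vertices are the orbits of pairs $(w,a)$, $w\in F$, $a\in A$, under the equivalence generated by $(w x_p, a)\sim (w, p(a))$ whenever $a\in \dom(p)$, placing a tuple in a relation exactly when some $F$-translate of it comes from a relation of $\str{A}$. By construction $F$ acts on $\str{M}$ by automorphisms, the map $a\mapsto [(1,a)]$ embeds $\str{A}$ into $\str{M}$, and each $x_p$ acts as a \emph{total} automorphism of $\str{M}$ extending $p$. To obtain something finite I would pass to a quotient: choose a finite-index normal subgroup $N\trianglelefteq F$, set $G=F/N$ and $\str{B}=\str{M}/N$, so that $G$ acts on $\str{B}$ and $\str{B}$ is a union of finitely many $G$-translates of the image of $A$, hence finite.

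The main obstacle is choosing $N$ so that folding $\str{M}$ down to $\str{B}$ neither identifies two distinct vertices of $\str{A}$ nor creates a relation-tuple absent from $\str{A}$; that is, so that $\str{A}\hookrightarrow \str{B}$ remains an embedding. This amounts to separating, in the profinite topology on $F$, the identity from a certain finite set of ``bad'' words (those witnessing a collapse of vertices or a spurious relation). For the class of \emph{all} finite $L$-structures there are no forbidden configurations beyond these, so residual finiteness of free groups already produces such an $N$; this is the heart of Herwig's contribution. (It is precisely when one forbids prescribed substructures that the full strength of the Ribes--Zalesskii theorem on products of finitely generated subgroups is needed, rather than mere residual finiteness.)

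Finally I would read off coherence. Since the extending map $\varphi\colon p\mapsto \hat p=\rho(x_p)$ factors through the homomorphism $\rho$, it respects products at the level of $F$. To make it respect \emph{composition of partial automorphisms} I would, before folding, impose the relators $x_{g\circ f}=x_g x_f$ for every composable pair (equivalently, work with the group generated by an inverse-semigroup presentation of $\mathcal P$ rather than by free independent symbols). Then for any coherent triple $(f,g,h)$ with $h=g\circ f$ one gets $\hat h=\rho(x_h)=\rho(x_g)\rho(x_f)=\hat g\circ\hat f$, which is exactly the coherence of $\varphi$ in the sense of the definition above. The extra difficulty here, which is Solecki's verification, is checking that the group carrying these composition relations still admits a finite quotient keeping $\str{A}$ embedded, i.e.\ that imposing the coherence relators does not destroy the profinite separation used in the previous step. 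Once this compatibility is secured, the resulting $\str{B}$, $G$ and $\varphi$ witness coherent EPPA for $\str{A}$.
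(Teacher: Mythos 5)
First, note that the paper offers no proof of Theorem~\ref{thm:Herwig}: it is imported as a black box from Herwig (with coherence credited to Solecki) and used only as the input to the construction in Section~4, so your attempt can only be judged on its own terms. Your strategy --- globalize the partial action of the free group $F$ on $A$ to a total action on an infinite structure $\str{M}$, then pass to a finite quotient $\str{M}/N$ --- is a recognized route (essentially the Herwig--Lascar partial-action/profinite-topology approach), and it is genuinely different both from Herwig's original combinatorial argument and from the Hrushovski/Hodkinson--Otto style valuation construction that this paper itself adapts in its proof of Theorem~\ref{EPPA}.

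However, the decisive step is wrong as stated. You claim that keeping $\str{A}$ embedded in $\str{M}/N$ amounts to separating the identity from a \emph{finite} set of bad words, so that residual finiteness of $F$ suffices. It does not. Two vertices $a\neq a'$ of $A$ become identified in $\str{M}/N$ exactly when $N$ meets the set of words whose induced partial map sends $a'$ to $a$; this set is a full coset $w\,\mathrm{Stab}(a')$ of the stabilizer of $a'$ under the partial action, and it is infinite in general (already for a single partial automorphism swapping two vertices). Avoiding it requires these finitely generated stabilizers to be closed in the profinite topology, i.e.\ M.~Hall's subgroup separability theorem, not mere residual finiteness. Worse, preventing a spurious relation on a tuple $(a_1,\dots,a_k)\notin\rel{A}{}$ requires choosing $N$ with $\bigcap_i N C_i^{-1}=\emptyset$, where each $C_i$ is such a coset; by compactness this reduces to separating $1$ from products of cosets of finitely generated subgroups --- precisely the Ribes--Zalesskii-type closure statement you explicitly claim is not needed in the unrestricted case. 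Finally, your coherence step replaces $F$ by its quotient under the relators $x_{g\circ f}=x_g x_f$ and then defers to ``Solecki's verification'' the fact that this quotient group still admits the required finite quotients keeping $\str{A}$ embedded; that deferred compatibility is the entire content of the coherence claim, so as written the coherence part is acknowledged rather than proved. (For comparison, the coherence mechanism actually used downstream in this paper is combinatorial: extend the partial permutations $\theta_S$ to permutations in an order-preserving way.)
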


To apply this construction to structures with functions we will temporarily interpret
functions as relational symbols.

\begin{defn} Suppose $L$ is a language where all function symbols are unary. 
Given an $L$-structure $\str{A}$ we denote by $\str{A}^-$ its {\em relational reduct}
constructed as follows.
The language $L^-$ of $\str{A}^-$ is a relational language containing all relational symbols
of $\str{A}$ and additionally containing for every function symbol $\func{}{}\in L_\mathcal F$ a relation symbol
$\rel{}{F}\in L_\mathcal F^-$ of arity 2.
The vertex set of $\str{A}$ is the same as the vertex set of $\str{A}^-$.
For every $\rel{}{}\in L_\mathcal R$ we have $\rel{A}{}=\nbrel{\str{A}^-}{}$
and for every $\func{}{}\in L_\mathcal F$ it holds that  $(t_1,t_2)\in \relfunc{A}{}$ if and only if $t_2\in\func{A}{}(t_1)$.
\end{defn}

Note that in the above, the structures $\str{A}$ and $\str{A}^-$ have the same automorphisms and any partial automorphism of $\str{A}$ is a partial automorphism of $\str{A}^-$ (but of course not conversely). 

The proof of Theorem~\ref{EPPA} will occupy the rest of this section.

\begin{proof}[Proof of Theorem~\ref{EPPA}]

Given $\str{A}$ we invoke Theorem~\ref{thm:Herwig} to obtain a coherent EPPA-witness $\str{B}^-$  of $\str{A}^-$ (with respect to all partial isomorphisms of $\str{A}^-$). We use the following terminology, following an exposition by Hodkinson in~\cite{hodkinson,hodkinson2003}.
The difference between our proof and that in \cite{hodkinson2003} and \cite{Siniora} (which give clique faithful EPPA for structures in relational languages) is that whereas in these papers, the vertices in the EPPA-witness are of the form $(v, \chi_v)$ (as in the notation below), our vertices will actually be sets of such vertices (carrying an $L$-structure).

A set $S\subseteq B^-$ is called {\em small} if there is some $g\in \Aut(\str{B}^-)$ such that $g(S)\subseteq A$. Otherwise $S$ is called {\em big}.
Denote by $\mathcal U$ the set of all big subsets of $B^-$ and note that this is preserved by $\Aut(\str{B}^-)$. 
Given $b\in B^-$ a map $\chi:\mathcal U\to \mathbb N$ is a {\em $b$-valuation function} if  $\chi(S)=0$ for all $b\notin S\in \mathcal U$ and $1\leq \chi(S)<\lvert S\rvert$ otherwise.

Given vertices $a,b\in B^-$ and their valuation functions, $\chi_a$ and $\chi_b$ we say
that pairs $(a,\chi_a)$ and $(b,\chi_b)$ are {\em generic} if either $(a,\chi_a)=(b,\chi_b)$, or $a\neq b$ and for every $S\in \mathcal U$ such that $a,b\in S$ it holds that $\chi_a(S)\neq \chi_b(S)$.

The key construction in the proof is the following ``local covering''
construction of a \textit{$b$-valuation $L$-structure} $\str{V}_b$.  Fix $b\in B^-$ and
an automorphism $\alpha:B^-\to B^-$ such that $\alpha(b)\in \str{A}^-$ (we can
assume such an automorphism always exists --- all other vertices can be removed
from $\str{B}^-$). Now consider the $L$-substructure $\str{V}^\alpha_b=\cl_\str{A}(\alpha(b))$ of $\str{A}$.
Suppose that for every $v\in \alpha^{-1}(V^\alpha_b)$ we have a $v$-valuation function $\chi_v$
such that the assigned valuation functions are generic for every pair of vertices in $\alpha^{-1}(V^\alpha_b)$. (Such a choice of valuation functions always exists and we will show how to obtain it later when we define an embedding $\phi$ of $\str{A}$.)
Denote by $V_b$ the set of all such pairs $(v,\chi_v)$, $v\in \alpha^{-1}(V^\alpha_b)$.
On the set $V_b$ we consider the $L$-structure $\str{V}_b$, called a {\em $b$-valuation},
which is defined in such a way that the composition of mappings $\alpha$ and $\pi(v,\chi_v)=v$
forms an embedding $\alpha\circ\pi:\str{V}_b\to \str{V}^\alpha_b$. (This is a standard construction, we use the 1--1 mapping $\alpha\circ\pi$ to pull back the structure $\str{V}^\alpha_b$ to $V_b$; note that the
 isomorphism type of structure here does not depend on the choice of $\alpha$.)
Observe that then $\cl_{\str{V}_b}((b,\chi_b))=\str{V}_b$. 

Notice that for every $b$ there are multiple choices of $b$-valuations $\str{V}_b$ (depending on particular choice of valuation functions assigned to vertices, but not depending on the choice of $\alpha$). 
The sets $V_b$ and structures $\str{V}_b$ will form a ``cover of $\str{B}^-$'' and we find it convenient to make the following definitions.

\begin{defn}
Recalling that all functions of $L$ are unary, we say that a pair of valuations $\str{V}_a$ and $\str{V}_b$ is {\em generic} if
\begin{enumerate}
 \item[(i)] every pair of vertices $(u,\chi_u)\in V_a$ and $(v,\chi_v)\in V_b$ is generic;
 \item[(ii)] for every $(u,\chi_u)\in V_a$ and $(v,\chi_v)\in V_b$ and $\func{}{}\in L_\mathcal F$ it holds that
 \begin{enumerate}
   \item if $(u,v)\in \relfunc{B}{}$, then $(v,\chi_v)\in V_a$ and
   \item if $(v,u)\in \relfunc{B}{}$, then  $(u,\chi_u)\in V_b$;
 \end{enumerate}
 \item[(iii)] if $(u,\chi_u)\in V_a\cap V_b$, then $\cl_{\str{V}_a}((u,\chi_u)) = \cl_{\str{V}_b} ((u,\chi_u))$.
\end{enumerate}
We also say that a set $S$ of valuations is {\em generic} if every pair of valuations in $S$ is generic.  
\end{defn}

Now we construct an $L$-structure $\str{C}$: 
\begin{enumerate}
\item The vertices of $\str{C}$ are all $b$-valuation $L$-structures $\str{V}_b$, for $b\in B^-$.
\item For every relation $\rel{}{}\in L_\mathcal R$ put $(\str{V}_{v_1},\str{V}_{v_2},\ldots, \str{V}_{v_{\arity{}{}}})\in \rel{C}{}$ if and only if $(v_1,v_2,\ldots, v_{\arity{}{}})\in \nbrel{\str{B}^-}{}$ and the set $\{\str{V}_{v_i}:1\leq i\leq \arity{}{}\}$ is generic.
\item For every function $\func{}{}\in L_\mathcal F$ put $\func{C}{}(\str{V}_{v_1})=\{\str{V}_{v_2}, \str{V}_{v_3}, \ldots,\allowbreak \str{V}_{v_s}\}$ for some $s>1$ if and only if
\begin{enumerate}
 \item $\str{V}_2,\ldots,\str{V}_{s+1}$ are substructures of $\str{V}_1$,
 \item $\nbfunc{\str{V}_{v_1}}{}((v_1,\chi_1))=\{(v_2,\chi_2),(v_3,\chi_3),\ldots, (v_s,\chi_s)\}$ where 
 for every $1\leq i\leq s+1$ we denote by $\chi_i$ is the unique $v_i$-valuation such that $(v_i,\chi_i)\in V_{v_i}$,
 \item $({v_1},{v_l})\in \relfunc{B}{}$ for every $2\leq l\leq s+1$.
 \item the set $\{\str{V}_{v_i}:1\leq i\leq s+1\}$ is generic.
\end{enumerate}  
Put $\func{C}{}(\str{V}_{v_1})=\emptyset$ if the conditions above are not satisfied.
\end{enumerate}
We give an embedding $\phi:\str{A}\to\str{C}$ with generic image.
For every big set $S\in \mathcal U$
choose $f_S:S\to \{0,1,2,\ldots, \vert S\vert - 1\}$ to be a function such that $f_S(v)>0$ if and only if $v\in A\cap S$ and 
for every pair of vertices $u,v\in A\cap S$ it holds that $f_S(u)\neq f_S(v)$. Such a function exists because $A\cap S$ is always a proper subset of $S$. Given a vertex $a\in A$ we put $\phi(a)$
to be an $a$-valuation constructed from $\cl_\str{A}(a)$ by mapping every vertex $v\in \cl_\str{A}(a)$
to $(v,\chi_v)$ where $\chi_v(S)=f_S(v)$. It is easy to verify that this is indeed an embedding from $\str{A}$ to
$\str{C}$ and $\phi(\str{A})$ is generic.

We aim to show that $\str{C}$ is an EPPA-witness of $\phi(\str{A})$. We first take time to introduce a terminology and  prove a lemma which will allow  us to use the fact that $\str{B}^-$ is an EPPA-extension of $\str{A}^-$.

Denote by $\mathcal V$ the union of all vertex sets of $\str{V}_v\in \str{C}$.
If $g\in \Aut(\str{B}^-)$, we say that the partial map $q:\mathcal V\to \mathcal V$ is 
{\em $g$-compatible} if for all $(a,\chi)\in \dom(q)$ there exists a $g(a)$-valuation function $\chi'$
such that $q((a,\chi))=(g(a),\chi')$.

Similarly, let $g\in \Aut(\str{B}^-)$ and $p:\str{C}\to \str{C}$ be a partial automorphism.
 We say that $p$ is {\em $g$-compatible} if there exists a $g$-compatible map $q:\mathcal V\to \mathcal V$
such that for all $\str{V}_v\in \dom(p)$ $q$ restricted to $V_v$ is an isomorphism of $\str{V}_v$ and $p(\str{V}_v)$.

Denote by $\pi$ the homomorphism (called a \emph{projection}) $\str{C}^-\to \str{B}^-$ defined by $\pi(\str{V}_v)=v$.

\begin{lem}
\label{lem:technical}
 Let $p:\str{C}\to \str{C}$ be a partial automorphism with generic domain and range,
$g\in \Aut(\str{B}^-)$, and suppose that $p$ is $g$-compatible. Then $p$ extends to some $g$-compatible $\hat{p}\in \Aut(\str{C})$.
\end{lem}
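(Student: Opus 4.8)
The plan is to exploit that $g$ itself lifts canonically to an automorphism of $\str{C}$, use this lift to reduce to the case $g=\mathrm{id}$, and then extend the witnessing map on the level of valuation functions by a greedy back-and-forth, the abundance of valuation functions (coming from $\lvert A\cap S\rvert<\lvert S\rvert$ for big $S$) providing the room needed.

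First I would build a canonical lift $\bar g\in\Aut(\str{C})$ of $g$. Since $g\in\Aut(\str{B}^-)$ permutes the family $\mathcal U$ of big sets, the assignment $(a,\chi)\mapsto(g(a),\chi\circ g^{-1})$ carries every $a$-valuation function to a $g(a)$-valuation function, is a bijection of $\mathcal V$, and preserves genericity of pairs (for a big $T\ni g(a),g(b)$ one has $g^{-1}(T)$ big and containing $a,b$, so the values $\chi(g^{-1}T)$ and $\psi(g^{-1}T)$ still differ). Applied vertexwise it sends each $b$-valuation $\str{V}_b$, living on $\alpha^{-1}(V^\alpha_b)$, to a valid $g(b)$-valuation (its underlying $\str{B}^-$-vertices $g(\alpha^{-1}(V^\alpha_b))$ are precisely those of a $g(b)$-valuation, via the choice $\alpha'=\alpha\circ g^{-1}$). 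As this lift respects both projection to $\str{B}^-$ and genericity, it preserves the relations and functions of $\str{C}$; hence $\bar g\in\Aut(\str{C})$ and is $g$-compatible. Now set $t:=\bar g^{-1}\circ p$. Then $t$ is an $\mathrm{id}$-compatible partial automorphism whose domain $\dom p$ and range $\bar g^{-1}(\range p)$ are again generic, and any $\mathrm{id}$-compatible extension $\hat t\in\Aut(\str{C})$ of $t$ yields $\hat p:=\bar g\circ\hat t$, a $g$-compatible automorphism extending $p$. This reduces everything to $g=\mathrm{id}$.

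For the identity case I would extend the witnessing map $q$, which now fixes first coordinates, to a bijection $\hat q:\mathcal V\to\mathcal V$ that still fixes first coordinates and preserves the genericity relation in both directions. The induced vertexwise map $\hat t$—well defined since $\hat q$ carries each valuation, determined by its set of pairwise generic pairs, to another such—is then automatically an automorphism of $\str{C}$ extending $t$ and $\mathrm{id}$-compatible: fixing first coordinates respects projection to $\str{B}^-$, while genericity-preservation respects the side conditions in the definitions of the relations and functions of $\str{C}$. Since $L$ has only unary functions, each valuation equals the closure of its root and its non-root members are exactly the function-images of that valuation in $\str{C}$; as $\dom t$ and $\range t$ are substructures they are closed under these images, so extending $\hat q$ over one new valuation forces a coherent assignment over its whole (finite) function-closure. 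I would therefore process the finitely many valuations outside $\dom t\cup\range t$ one cluster at a time.

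The main obstacle is the single step of this process: given an unplaced pair $(a,\chi)$, one must produce an image $(a,\chi')$ whose valuation function $\chi'$ has not yet been used and which reproduces, against every already-placed image, exactly the pattern of genericity and non-genericity that $(a,\chi)$ has against the corresponding preimages. This is the technical heart, and it is precisely where the valuation technique of Hodkinson--Otto is needed. The freedom comes from $\lvert A\cap S\rvert<\lvert S\rvert$: on each big $S\ni a$ the value $\chi'(S)$ ranges over $\{1,\dots,\lvert S\rvert-1\}$, leaving enough values to realise the prescribed agreements and disagreements with the finitely many placed pairs and to avoid repeating an already-used function. Running this as a back-and-forth between domain and range over the finite set $\mathcal V$ upgrades the construction to a bijection, and composing the resulting $\hat t$ with $\bar g$ completes the proof.
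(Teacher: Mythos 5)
Your reduction to the case $g=\mathrm{id}$ via the canonical lift $(a,\chi)\mapsto(g(a),\chi\circ g^{-1})$ is sound (and is not in the paper, which handles general $g$ in one pass), and you correctly locate the technical heart in the extension of the witnessing map $q$ on $\mathcal V$. But the step you leave to a ``greedy back-and-forth'' contains a genuine gap: the invariant you propose to maintain --- that the partial map on $\mathcal V$ fixes first coordinates and preserves the genericity \emph{relation} --- is too weak to be extendable one pair at a time. Non-genericity of $(a,\chi)$ and $(b,\psi)$ is an existential condition (``$\chi(S)=\psi(S)$ for \emph{some} big $S\ni a,b$''), so preserving it imposes a disjunction of forced equalities on $\chi'$, and these can clash. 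Concretely: suppose $(b_1,\psi_1)$ and $(b_2,\psi_2)$ are already placed with images $(b_1,\psi_1')$, $(b_2,\psi_2')$, that $S$ is the only big set containing $a$ together with $b_1$ (and also the only one containing $a$ together with $b_2$), that $\chi(S)=\psi_1(S)=\psi_2(S)$, but that the non-genericity of the placed images is witnessed only at some other big set $S'\not\ni a$, so that $\psi_1'(S)\neq\psi_2'(S)$. Your step then requires $\chi'(S)=\psi_1'(S)$ and $\chi'(S)=\psi_2'(S)$ simultaneously, which is impossible. Counting available values in $\{1,\dots,\lvert S\rvert-1\}$ does not help here, because the obstruction is an inconsistent system of \emph{equalities}, not a shortage of values to avoid.

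The paper closes exactly this hole by maintaining a strictly stronger, per-big-set invariant: for each $S\in\mathcal U$ the assignment $\chi_b(S)\mapsto\chi'_{g(b)}(g(S))$ read off from $q$ is a partial permutation of $\{0,1,\dots,\lvert S\rvert-1\}$ fixing $0$ (this uses genericity of $\dom(p)$ and of the range), which is extended to a full permutation $\theta^p_S$; then $\hat q$ is defined globally in one stroke by $\chi'(g(S))=\theta^p_S(\chi(S))$. With this bookkeeping, preservation of genericity in both directions is automatic (values at each $S$ are permuted uniformly across all of $\mathcal V$), no back-and-forth is needed, and --- importantly for the rest of the paper --- the permutations $\theta^p_S$ are the objects that can later be chosen coherently to get coherent EPPA. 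To repair your argument you would have to replace your invariant by this per-$S$ one, at which point the iterative construction collapses to the paper's single global definition.
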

\begin{proof}
As $\dom(p)$ is generic, for every $v\in \pi(\dom(p))$ there is precisely one $v$-valuation function $\chi_v$ such that
the pair $(v,\chi_v)$ is a vertex of some valuation $V_b\in \dom(p)$. Denote by $D$ the set of all such pairs (so $D = \bigcup \dom(p)$).
The same is true for the range and denote by $R$ all pairs appearing as vertices in valuations in $p(\dom(p))$.
It follows that $p$ uniquely defines a $g$-compatible map $q:D\to R$.
Fix a big set $S\in \mathcal U$. Then the set of pairs
$$\left\{\left(\chi_b(S),\chi'_{g(b)}\left(g\left(S\right)\right)\right):\left(b,\chi_b\right)\in D, \, q(b, \chi_b) = (g(b), \chi'_{g(b)})\right\}$$
is the graph of a partial permutation of $\{0,1,2,3,\ldots, \lvert S\rvert-1\}$ fixing 0 if defined on it. Extend it to a permutation $\theta_S^p$ of  $\{0,1,2,3,\ldots, \lvert S\rvert -1\}$ fixing 0.

Now we define $\hat q:\mathcal V\to \mathcal V$ by mapping $(b,\chi)\in \mathcal V$ to $(g(b),\chi')$ such that $\chi'(g(S)))=\theta_S^p(\chi(S))$
and $\hat p:\str{V}_v\to \str{V}_{g(v)}$ where $\str{V}_{g(v)}$ is created from $\str{V}_v$ by mapping every vertex $(b,\chi)\in V_v$ to $\hat q(b,\chi)$.

It is easy to verify that $\hat q$ is a well defined permutation of $\mathcal V$ which extends $q$ and is $g$-compatible. Moreover it preserves the relation of genericity between elements of $\mathcal{V}$.  Therefore also $\hat p$ is a well-defined permutation of $C$ which preserves generic sets, extends $p$ and is $g$-compatible. Consequently $\hat p$  is an automorphism of $\str{C}$.
\end{proof}

By Lemma~\ref{lem:technical} the extension property of $\str{C}$ for partial isomorphisms of $\phi(\str{A})$ follows easily.
Let $p$ be a partial isomorphism of $\phi(\str{A})$. We extend it to $\hat p\in \Aut(\str{C})$.
First extend $\phi^{-1}\circ p$ (which is an partial automorphism
of $\str{A}$) to automorphism $g\in \Aut(\str{B}^-)$. Clearly $p$ is $g$-compatible and because domain and range are generic. By Lemma~\ref{lem:technical}
$p$ extends to $g$-compatible $\hat p\in \Aut(\str{C})$. This shows that $\str{C}$ is indeed an extension of $\phi(\str{A})$.

\medskip

For coherence, we use a similar argument to that in~\cite{Siniora2, Siniora}. Given a coherent triple $(f_0,g_0,h_0)$  of partial automorphisms of $\str{A}$ we first extend this to a coherent triple $(f,g,h)$ of automorphisms of $\str{B}^-$. We let $(f_1,g_1, h_1)$ be the coherent triple of partial automorphisms of $\phi(\str{A})$ induced by $(f_0, g_0, h_0)$. Using Lemma~\ref{lem:technical} we extend $f_1$ to an $f$-compatible $\hat{f} \in \Aut(\str{C})$. Similarly  we obtain extensions $\hat{g}$, $\hat{h}$ of $g,h$. In order to ensure that the triple $(\hat{f}, \hat{g},\hat{h})$ is coherent, we only need to ensure that, in the proof of Lemma~\ref{lem:technical}, the permutations $\theta_S^p$ can be chosen coherently. More precisely, we want to ensure that $\theta_S^g\theta_S^f = \theta_S^h$. As in \cite{Siniora}, if we extend any partial permutation $\alpha$ on $\{1,\ldots, s\}$ to a permutation by mapping $\{1,\ldots, s\}\setminus \dom(\alpha)$ to $\{1,\ldots, s\}\setminus \alpha(\dom(\alpha))$ in an order-preserving way, then we obtain the required coherence.

\medskip

Finally we verify that  $\str{C}$ is faithful for irreducible
substructures.  
Let $\str{D}$ be an irreducible substructure of $\str{C}$. 
We first show that $\str{D}$ is generic.  Suppose not and that 
$\str{V}_a,\str{V}_b\in D$ form a  non-generic pair of vertices. Let $\str{E}_a = \{\str{V}_v \in D : \str{V}_a \not\in \cl_{\str{D}}(\str{V}_v)\}$. As closures are unary, this is a (proper) substructure of $\str{D}$. Similarly define $\str{E}_b$. Note that $\str{E}_a\cup \str{E}_b = \str{D}$: otherwise, there is $\str{V}_v \in \str{D}$ with $\str{V}_a, \str{V}_b \in \cl_{\str{D}}(\str{V}_v)$ and then $\str{V}_a, \str{V}_b \subseteq \str{V}_v$, so form a generic pair. Moreover, no relation of $\str{C}$ can involve a vertex $\str{V}_u \in \str{E}_a\setminus \str{E}_b$ and a vertex $\str{V}_v \in \str{E}_b\setminus \str{E}_a$ as $\str{V}_b \subseteq \str{V}_u$ and $\str{V}_a \subseteq \str{V}_v$, which implies that $\str{V}_u$, $\str{V}_v$ is not a generic pair. Thus $\str{D}$ is a free amalgam of the substructures $\str{E}_a$ and $\str{E}_b$, which is a contradiction to its irreducibility. So $\str{D}$ is generic.

Because $\str{D}$ is generic it follows that $S = \pi(D)$ is small. Indeed, for each $u \in S$, there is a $u$-valuation $\chi_u$ such that the set of pairs $\{(u,\chi_u): u \in S\}$ is generic. If $S$ were big, this would imply that $\{\chi_u(S) : u \in S\}$ has size $\vert S \vert$, which is impossible (its elements $j$ satisfy $1 \leq j < \vert S \vert$).

It follows that there is $g\in \Aut(\str{B}^-)$ such that $g(\pi(\str{D}))\subseteq A$.
The map $p : \str{D} \to \phi(\str{A})$ given by $p(\str{V}_u) =  \phi(g(u))$ is a $g$-compatible partial automorphism of $\str{C}$ with  generic domain and range. By Lemma~\ref{lem:technical}, $p$ extends to $\hat p\in \Aut(\str{C})$ and $\hat p(\str{D})\subseteq \phi(\str{A})$. This completes the proof that $\str{C}$ is faithful for irreducible substructures.
\end{proof}
\begin{remark}
The construction above adds an extra tool to the existing constructions of
EPPA-witness and can be thus used as an additional layer in the construction of
EPPA-witness for non-free amalgamation classes based on application of
Herwig-Lascar theorem~\cite{herwig2000,solecki2009,otto2014}. An example of such application
is given in~\cite{Aranda2017} giving EPPA for some classes of antipodal metric spaces.
\end{remark}

\section{Applications}
\label{sec:examples}

In this section we discuss how some previously-studied classes of structures can be viewed naturally as free amalgamation classes of structures with set-valued functions.

Before doing this, we mention an alternative viewpoint for classes of structures with closures which we used in~\cite{Evans2}.  In the following examples we will show how this is related to our definitions.

Consider a class $\K$ of finite $L$-structures, closed under isomorphisms,  and a distinguished class $\sqsubseteq$ of embeddings between elements of $\K$, called \emph{strong embeddings}. We shall assume $\sqsubseteq$ is closed under composition and contains all isomorphisms. 
In this case, we refer to $(\K; \sqsubseteq)$ as a \emph{strong class}. If $\str{A}$ is a substructure of $\str{B} \in \K$ and the inclusion map $\str{A} \to \str{B}$ is in $\sqsubseteq$, then we say that $\str{A}$ is a \textit{strong substructure} of $\str{B}$ and write $\str{A} \sqsubseteq \str{B}$. In the other words, a strong class is a subcategory of $\K$ with the strong embeddings. 

The Ramsey property and amalgamation property can be defined analogously to the Ramsey property and amalgamation property
of classes of $L$-structures, but considering only strong substructures and strong embedding.
Most of the \Fraisse{} theory remains valid in this setting (see~\cite{Evans2} for more details).

\subsection{$k$-orientations}
For a fixed natural number $k$, a {\em $k$-orientation} is an oriented (that is, directed) graph such that the out-degree of
every vertex is at most $k$.  We say that a substructure $G_1=(V_1,E_1)$  of a 
$k$-orientation  $G_2=(V_2,E_2)$ is {\em successor closed}  if
 there is no edge from $V_1$ to $V_2\setminus V_1$ in $G_2$. 

Denote by $\mathcal D_k$ the class of all finite $k$-orientations.  This is
a hereditary class closed for free amalgamation over successor-closed subgraphs
and thus the successor-closedness plays the r\^ole of strong substructure, so $\mathcal D_k$ can be considered as a class with corresponding strong embeddings.
We show how to turn $\mathcal D_k$ into a free amalgamation class in the sense of Definition~\ref{defn:amalg}.

Given an oriented graph $G=(V,E)\in \mathcal D_k$ denote by $\str{G}^+$ the structure with vertex set
$V$ and (partial) unary function $\func{}{}$. The function $\func{}{}$
maps every vertex to its out-neighborhood.
Denote by $\mathcal D^+_k$ the class of all structures $\str{G}^+$ for $G\in \mathcal D_k$.
Because $\str{G}^+_1$ is a substructure of $\str{G}^+_2$ if and only if $G_1$ is successor
closed in $G_2$ it follows that $\mathcal D^+_k$ is a free amalgamation class.
We immediately obtain:
\begin{thm}
\label{orientations}
 The class
$\vv{\mathcal D}^+_k$ is Ramsey and there exists a 
class $\mathcal O_k\subseteq \vv{\mathcal D}^+_k$ with the ordering property (with respect to $\mathcal D^+_k$).
The class $\mathcal D^+_k$ has the extension property for partial automorphisms.
\end{thm}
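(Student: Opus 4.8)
The plan is to confirm that $\mathcal D^+_k$ satisfies the hypotheses of the three main theorems and then invoke them. I would work in the language $L$ whose only symbols are the unary function symbols $\func{}{1},\ldots,\func{}{k}$, with $d(\func{}{i})=1$ and $r(\func{}{i})=i$; the assignment $G\mapsto \str{G}^+$ encodes the out-neighbourhood of a vertex of out-degree $i$ as the (unordered, size-$i$) value $\func{}{i}$ of that vertex. Since each $d(\func{}{i})=1$, all function symbols are unary, which is exactly what Theorem~\ref{EPPA} will require.

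First I would make the substructure correspondence precise. Suppose $W\subseteq V$ is the vertex set of a substructure of $\str{G}^+$ and $v\in W$ has out-degree $i$, so $v\in \dom(\func{}{i})$ in $\str{G}^+$. By the definition of substructure, $v$ must then lie in $\dom(\func{}{i})$ of the substructure with the same value $\func{}{i}(v)$, namely the out-neighbourhood of $v$; since function values lie in ${W\choose i}$, this forces the entire out-neighbourhood into $W$. Hence the vertex sets of substructures are exactly the successor-closed sets, the induced substructure on such a $W$ is the plus-construction of the induced subgraph $G[W]$, and by the analogous reading of the definition of embedding, the image of any embedding $\str{G}^+_0\to\str{G}^+$ is successor-closed.

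Next I would verify the free amalgamation property (heredity being immediate from the previous paragraph, and the joint embedding property being the special case of amalgamation over the empty structure). Given embeddings of $\str{G}^+_0$ into $\str{G}^+_1$ and $\str{G}^+_2$, the free amalgam glues $G_1$ and $G_2$ along $G_0$ and introduces no edges between the two sides. The point to check is that no out-degree exceeds $k$: a vertex of $G_0$ already has all its out-neighbours inside $G_0$ because $G_0$ is successor-closed in each $G_j$, while a vertex of $G_j\setminus G_0$ retains its out-neighbourhood within $G_j$; in every case the out-degree is unchanged, so the amalgam again lies in $\mathcal D^+_k$. Thus $\mathcal D^+_k$ is a free amalgamation class of $L$-structures.

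With this in hand, the three assertions follow directly. That $\overrightarrow{\mathcal D}^+_k$ is Ramsey is Theorem~\ref{thm:main}; the existence of an admissible ordering class $\mathcal O_k\subseteq\overrightarrow{\mathcal D}^+_k$ with the ordering property with respect to $\mathcal D^+_k$ is Theorem~\ref{thm:ordering}; and since every $\func{}{i}$ is unary, Theorem~\ref{EPPA} gives that $\mathcal D^+_k$ has (even coherent) EPPA. The only real content is the verification above, and the single step deserving care is the free amalgamation property --- confirming that successor-closedness of the shared part is exactly what keeps all out-degrees at most $k$ in the amalgam; the remaining items are routine unwindings of the definitions.
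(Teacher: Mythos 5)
Your proposal is correct and follows essentially the same route as the paper: the paper likewise observes that substructures of $\str{G}^+$ correspond exactly to successor-closed subgraphs, concludes from this that $\mathcal D^+_k$ is a free amalgamation class in a unary-function language, and then cites Theorems~\ref{thm:main}, \ref{thm:ordering} and \ref{EPPA}. You merely spell out in more detail the verification that the paper leaves as ``it follows that $\mathcal D^+_k$ is a free amalgamation class,'' which is a reasonable amount of care for the one step with actual content.
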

Let us briefly discuss what is the structure of $\mathcal O_k$. Given
$\ostr{A}\in \mathcal O_k$, the closure-components (recall
Definition~\ref{def:closure-components}) of $\ostr{A}$ corresponds to strongly
connected components in the underlying oriented graph and $\ostr{A}$ is an
ordered closure-extension of level $0$ if and only if the underlying graph is
strongly connected. More generally $\ostr{A}$ is an
ordered closure-extension of level $k$ if it contains a single strongly connected component
$C$ of level $k$ and all other vertices of $\ostr{A}$ are reachable from
$C$ via an oriented path. Condition~\ref{o:unique} of
Definition~\ref{def:admissible} thus requires that the ordering of $C$ is
determined by the isomorphism type of $\str{A}$ (the underlying oriented graph)
and the ordering of $A\setminus C$. Thus in $\mathcal O_k$, vertices are ordered primarily
by the number of vertices in their closure. Every closure-component forms an interval where the order within this interval is fixed by the similarity type of corresponding closure-extension.
The relative order of closure-components is given by their isomorphism type and the 
ordering of closure-components reachable from them. This can be seen as a generalization
of the order of oriented forests described in Section~\ref{sec:ordering}.

Theorem~\ref{orientations} can be seen as the most elementary use of Theorems~\ref{thm:main}, \ref{thm:ordering}, and \ref{EPPA},
but it has important consequences.
Denote by $\mathcal C_k$ the undirected reducts of oriented graphs in $\mathcal D_k$, that is,  the class
of all unoriented graphs which can be oriented to an $k$-orientation.
Given a graph $G=(V,E)$, its \emph{predimension} is $\delta(G) = k \lvert V \rvert - \lvert E\rvert$.
It is the heart of Hrushovski predimension construction that the class $\mathcal C_k$ forms a free amalgamation class
for the following notion of strong subgraph.
Given a graph $G\in \mathcal C_k$ its subgraph $H$ is a {\em self sufficient} or {\em strong subgraph} if
for every subgraph $H'$ of $G$ containing $H$ it holds that $\delta(H)\leq \delta(H')$.

The connection between the Hrushovski predimension construction and orientability follows by the Marriage Theorem
and was first introduced in~\cite{Evans2003, Evans2005}.  Its consequences in Ramsey theory are the main topic of~\cite{Evans2}
and they are out of scope of this paper. We however point out why this free amalgamation class over strong subgraphs
does not translate to a free amalgamation class when enriched by set-valued functions representing the smallest
self-sufficient subgraph of a given set. Consider a graph in $\mathcal C_2$ created as amalgamation
depicted in Figure~\ref{fig:nonfree}.
\begin{figure}[t]
\centerline{\includegraphics{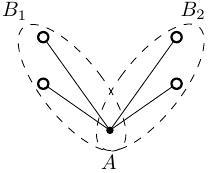}}
\caption{An amalgamation of $B_1,B_2\in \mathcal C_2$ over $A$.}
\label{fig:nonfree}
\end{figure}
While in both $B_1$ and $B_2$ the vertices denoted by circles forms a self-sufficient substructures,
it is not the case in the free amalgamation. The predimension of the 4 independent vertices is 8, while the
predimension of the whole amalgam is 6.
 It follows that in order to represent self-sufficient substructures
by means of set-valued functions, a new function from the vertices denoted by circles would need to be added.
This makes the amalgamation non-free in our representation and this is the reason why additional information about orientation of the edges
is needed.
\subsection{Steiner systems}
\label{sec:steiner}
It was was established in~\cite{bhat2016ramsey,Hubicka2016,Hubicka2017graham} that the class of finite partial Steiner systems is Ramsey with respect to strong
subsystems. Moreover, the  ordering property follows from techniques of~\cite{Nesetril1975}. We derive both results 
by a re-interpretation of partial Steiner systems as a free amalgamation class in a functional language.

This is an example where non-unary functions are necessary.

\medskip

\begin{figure}[t]
\centerline{\includegraphics{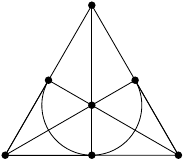}}
\caption{Fano plane (Steiner (2,3)-system).}
\label{fig:fano}
\end{figure}
For fixed integers $r\geq t\geq 2$, by a  {\em partial Steiner $(r,t)$-system} we mean an $r$-uniform
hypergraph $G=(V,E)$ with the property that every $t$-element subset of $V$ is
contained in at most one edge of $G$. (If there is always exactly one such edge, we have a Steiner system.) Abusing terminology somewhat, we shall refer to this simply as a Steiner $(r,t)$-system.
Given two Steiner $(r,t)$-systems $G$ and $H$, we say that $G$ is a {\em strongly
induced subsytem} of $H$ if
\begin{enumerate}
 \item $G$ is an induced subhypergraph of $H$; and,
 \item every hyperedge of $H$ which is not a hyperedge of $G$ intersects 
  $G$ in at most $t-1$ vertices.
\end{enumerate}

In~\cite{bhat2016ramsey} the Ramsey property was formulated with respect to strongly
induced subsystems.  We, equivalently, use set-valued functions to represent this.

\begin{defn}
Denote by $\mathcal S_{r,t}$ the class of all finite structures $\str{A}$ with one 
function $\func{}{}$ from $t$-tuples with the following properties:
\begin{enumerate}
  \item If $\func{A}{}(\vec{x})\neq \emptyset$ then $\vec{x}$ has no repeated vertices, $|\func{A}{}(\vec{x})|=r$ and every vertex of $\vec{x}$ is in $\func{A}{}(\vec{x})$.
  \item Whenever $\vec{x}$ is an $t$-tuple of vertices of $\str{A}$, $\vec{x}_2$ is an $t$-tuple consisting of distinct vertices in $\func{A}{}(\vec{x})$ it holds that
 $\func{A}{}({\vec{x}})=\func{A}{}(\vec{x}_2)$.
\end{enumerate}
\end{defn}
It is easy to see that $\mathcal S_{r,t}$ is a free amalgamation class.

Given a Steiner $(r,t)$-system $G=(V,E)$ we can interpret it as a structure
$\str{S}_G\in \mathcal S_{r,t}$ with vertex set $V$ and function $\func{}{}(\vec{x})$ defined for every $t$-tuple $\vec{x}$ of distinct vertices such that there is hyperedge $A\in E$
containing all vertices of $\vec{x}$. In this case we put $\func{}{}{}(\vec{x})=A$.

Observe that if $G$ is a strong subsystem of $H$ if and only if $\str{S}_G$
is a substructure of $\str{S}_H$. It follows that Steiner $(r,t)$-systems
are in 1--1 correspondence to structures $\str{S}_G$ and moreover this correspondence
maps subsystems to substructures.

We obtain an alternative proof of the following main result of~\cite{bhat2016ramsey}:
\begin{thm}
The class $\vv{\mathcal S}_{r,t}$ is a Ramsey class with the ordering property.
\end{thm}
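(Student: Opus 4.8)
The plan is to establish the two assertions separately, each as a direct instance of a general result already proved. Since it has just been observed that $\mathcal S_{r,t}$ is a free amalgamation class of $L$-structures in a language whose only symbol is the (partial, symmetric) function $\func{}{}$, the Ramsey property of $\overrightarrow{\mathcal S}_{r,t}$ is immediate from Theorem~\ref{thm:main}. So the only point genuinely requiring verification is the ordering property.

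For the ordering property I would appeal to Proposition~\ref{prop:freeorder}, which characterises exactly when $\overrightarrow{\K}$ has the ordering property: precisely when all vertex-closures $\cl_\str{A}(u)$ are mutually isomorphic single-element structures. Thus it suffices to check that for every $\str{A}\in \mathcal S_{r,t}$ and every vertex $u$ one has $\cl_\str{A}(u)=\{u\}$. The key observation is that $\func{}{}$ takes $t$-tuples as inputs with $t\geq 2$, while by the first defining property of $\mathcal S_{r,t}$ every tuple in $\dom(\func{A}{})$ has pairwise distinct entries. Hence no tuple all of whose coordinates equal $u$ can lie in $\dom(\func{A}{})$, so an application of the function is never triggered by the singleton $\{u\}$; consequently $\{u\}$ is already a substructure and therefore equals $\cl_\str{A}(u)$. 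As the language carries no relational symbols, all one-vertex structures are isomorphic, so these trivial closures are mutually isomorphic. Proposition~\ref{prop:freeorder} then yields the ordering property for $\overrightarrow{\mathcal S}_{r,t}$.

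There is no substantial obstacle here; the entire content is this single-vertex closure computation, which is exactly the phenomenon anticipated in the Remark following Proposition~\ref{prop:freeorder}. Although single vertices are closed, $t$-tuples of vertices do have genuinely non-trivial closures (the $r$-set forming the hyperedge, by the second defining property). It is precisely because the ordering property of the free orderings depends only on closures of \emph{individual} vertices, and not on closures of larger configurations, that $\overrightarrow{\mathcal S}_{r,t}$ itself suffices and one need not pass to the more elaborate admissible orderings of Theorem~\ref{thm:ordering}.
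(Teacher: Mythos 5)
Your proof is correct. The paper disposes of the theorem in two lines by the same first step (Theorem~\ref{thm:main} for the Ramsey property) and then, for the ordering property, by observing that single vertices are closed in $\mathcal S_{r,t}$ so that \emph{every} ordering is admissible, whence Theorem~\ref{thm:ordering} applies to $\mathcal O = \overrightarrow{\mathcal S}_{r,t}$. You instead route the ordering property through Proposition~\ref{prop:freeorder}, checking its hypothesis directly: since every tuple in $\dom(\func{A}{})$ has pairwise distinct entries and $t\geq 2$, no all-$u$ tuple lies in the domain, so $\{u\}$ is already a substructure, and with no relational symbols all these singleton closures are mutually isomorphic. Both arguments hinge on exactly the same computation (vertices are closed), but your citation is the lighter one: the ``if'' direction of Proposition~\ref{prop:freeorder} is proved by an elementary doubling-plus-Ramsey argument using only Theorem~\ref{thm:main}, whereas the paper's reference to Theorem~\ref{thm:ordering} drags in the full machinery of admissible orderings, and one must still verify (as the paper does only implicitly) that when all vertex closures are trivial the preorder $\preccurlyeq$ and conditions A3--A6 are vacuous, so that $\mathcal O$ really is all of $\overrightarrow{\mathcal S}_{r,t}$. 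Your closing remark correctly identifies why the non-trivial closures of $t$-sets (the $r$-element blocks) are harmless: Proposition~\ref{prop:freeorder} is sensitive only to closures of single vertices, which is precisely the point of the Remark following it.
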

\begin{proof}
The Ramsey property follows directly from Theorem~\ref{thm:main}. For the ordering property, note that single vertices are closed in structures in $\mathcal{S}_{r,t}$, so all orderings are admissible, in the sense of Theorem ~\ref{thm:ordering}. 
\end{proof}
In \cite{Hubicka2017graham} we obtained further corollaries to this approach.
We remark that EPPA is presently open for the class of partial Steiner systems. Of course, our results in Section~\ref{sec:EPPA} does not apply in this case, as the function introduced is not unary.

\subsection{Bowtie-free graphs}
\begin{figure}[t]
\centerline{\includegraphics{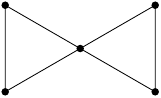}}
\caption{The bowtie graph.}
\label{fig:bowtie}
\end{figure}
A {\em bowtie} $B$ is a graph consisting of two triangles with one vertex
identified (see Figure~\ref{fig:bowtie}). A graph $G$ is {\em bowtie-free}
if there is no monomorphism from $B$ to $G$.  The existence of a 
universal graph in the class of all countable bowtie-free graphs was shown in
\cite{Komjath1999}. The paper \cite{Cherlin1999} gave a far reaching generalization
by giving a condition for the existence of $\omega$-categorical universal graphs for classes defined by forbidden
monomorphisms (which we refer to as to as {\em Cherlin-Shelah-Shi classes}). This  led to several new classes being identified~\cite{Cherlin2007, Cherlin2015, Cherlinb}. Bowtie-free graphs represent a key example of a class that
is not a free amalgamation class by itself, but can be turned into one by means
of unary functions.  This analysis was carried in~\cite{Hubivcka2014}  where we gave an explicit characterisation of the ultrahomogeneous lift and the Ramseyness of this lift.
The presentation can be greatly simplified by considering
structures with set-valued functions and moreover we show the extension property
for partial automorphisms (See also~\cite{Siniora2} for related results on ample generics). 

 While not all Cherlin-Shelah-Shi classes give rise to free
amalgamation classes (see the more detailed analysis in~\cite{Hubicka2016}),
what follows can be generalized to many of the other block-path examples given
by~\cite{Cherlin2015} and \cite{Cherlinb}. 

We review the main observations about the structure of bowtie-free graphs from~\cite{Hubivcka2014}. For completeness we include the (easy) proofs.

\begin{defn} [Chimneys]
For $n\geq 2$, an {\em $n$-chimney graph}, $Ch_n$, is a free amalgamation of $n$ triangles over one common edge.
A {\em chimney graph} is any graph $Ch_n$ for some $n\geq 2$.
\end{defn}

Chimneys together with $K_4$ (a clique on 4 vertices) will form the only components of bowtie-free graphs formed by triangles. The assumption $n \geq 2$  for chimney is a technical assumption to avoid isolated triangles. Note also that $Ch_2$ is not an induced subgraph of $K_4$.
 
\begin{defn}[Good bowtie-free graphs]
\label{def:goodgraph}
A bowtie-free graph $G=(V,E)$ is {\em good} if every vertex is contained either in a copy of chimney or a copy of the complete graph $K_4$.
\end{defn}

The structure of bowtie-free graphs is captured by means of the following three lemmas:

\begin{lem}[\cite{Hubivcka2014}]
\label{lem:bowtiestructure}
Every bowtie-free graph $G$ is an induced subgraph of some good bowtie-free graph $G'$.
\end{lem}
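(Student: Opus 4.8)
The plan is to first pin down the local triangle structure forced by bowtie-freeness, and then to attach small gadgets to exactly those vertices that are not yet contained in a chimney or a $K_4$.

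First I would analyse how the triangles of $G$ fit together. Call two triangles \emph{edge-adjacent}, written $T\sim T'$, if they share two vertices (an edge). Since $G$ is bowtie-free, any two distinct triangles meeting in a vertex must in fact meet in an edge, as otherwise they form a bowtie. I claim $\sim$ is transitive: if $T$ and $T''$ are each edge-adjacent to a common triangle $T'=\{x,y,z\}$, then the two edges of $T'$ in question share a vertex of $T'$, and this vertex lies in both $T$ and $T''$; being triangles sharing a vertex, $T$ and $T''$ then share an edge, so $T\sim T''$. Hence $\sim$ is an equivalence relation, and the vertex set of each class (a \emph{triangle-block}) is spanned by triangles that pairwise share an edge.

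Next I would classify the blocks. For a block containing at least two triangles, fix two of them sharing an edge $ab$, say $\{a,b,c\}$ and $\{a,b,d\}$, and consider any further triangle $T$ of the block. A short case analysis on which edges $T$ shares with these two shows that either $T$ again contains the edge $ab$, so the block is a book of triangles over $ab$, i.e.\ a chimney $Ch_n$, or $T$ forces all six edges on $\{a,b,c,d\}$ and the block is $K_4$; moreover, once a $K_4$ is present no further vertex can join the block without creating a bowtie. Thus every triangle-block is a single triangle ($Ch_1$), a chimney $Ch_n$ with $n\ge 2$, or $K_4$. In particular, a vertex lying in some triangle already lies in a chimney or a $K_4$ \emph{unless} its block is a single triangle; and distinct single-triangle blocks are vertex-disjoint, since a shared vertex would place them in the same $\sim$-class.

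Finally I would build $G'$. For each block that is a single triangle $\{a,b,c\}$ I add a new vertex $d$ adjacent only to $a$ and $b$, upgrading the block to $Ch_2$; and for each vertex $v$ of $G$ lying in no triangle I attach a fresh copy of $Ch_2$ whose base edge is $\{v,b\}$, with $b$ and the two apexes new. No edge is added inside $V(G)$, so $G$ is an induced subgraph of $G'$, and by construction every vertex of $G'$ now lies in a chimney or a $K_4$, so $G'$ is good. The main obstacle is verifying that $G'$ is still bowtie-free. For this I would first catalogue the triangles of $G'$ — the old triangles of $G$, one new triangle $\{a,b,d\}$ per upgraded single triangle, and the two triangles of each fresh chimney (each new vertex has only two neighbours, so no unintended triangles arise) — and then check that no two triangles meet in exactly one vertex. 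The key points are that a gadget triangle $\{a,b,d\}$ meets any old triangle only in the edge $ab$, because the vertices of a single-triangle block lie in no other triangle of $G$; that a fresh-chimney triangle meets no old triangle, since its base vertex $v$ is triangle-free; and that gadgets attached at different sites are vertex-disjoint. This bookkeeping step — guaranteeing that every newly created triangle shares an edge, never a single vertex, with every pre-existing triangle — is where I expect the argument to require the most care.
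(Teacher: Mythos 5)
Your proof is correct and uses essentially the same construction as the paper: attach a fresh $Ch_2$ at each triangle-free vertex and add one new vertex to promote each isolated triangle to a $Ch_2$, then check that no new pair of triangles meets in exactly one vertex. Your preliminary classification of triangle-blocks into single triangles, chimneys and $K_4$'s just makes explicit the facts the paper's terser verification relies on implicitly (two triangles sharing a vertex must share an edge, and distinct single-triangle blocks are disjoint).
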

\begin{proof}
The graph $G$ can be extended in the following way:
\begin{enumerate}
 \item[1.] For every vertex $v$ not contained in a triangle add a new copy of $Ch_2$ and identify  the vertex $v$ with one of the vertices of $Ch_2$.
 \item[2.] For every triangle $v_1,v_2,v_3$ that is not part of a 2-chimney nor a $K_4$, add a new vertex $v_4$ and the triangle $v_1,v_2,v_4$
       turning the original triangle into $Ch_2$.
\end{enumerate}
It is easy to see that step $1$ cannot introduce a new bowtie.

Assume, to the contrary, that step $2$ introduced a new bowtie. Further assume that $v_1$ is the
unique vertex of degree 4 of this new bowtie and consequently there is another
triangle on vertex $v_1$ in $G$.  Because $G$ is bowtie-free, this triangle
must share a common edge with the triangle $v_1,v_2,v_3$ and therefore the 
triangle $v_1,v_2,v_3$ is already part of a $K_4$ or a 2-chimney in the original graph $G$.  A
contradiction.
\end{proof}

Now we are ready to describe how to turn the class of bowtie-free graphs into a free amalgamation class. Our language $L$
will consist of one binary relation $\rel{}{}$ and unary function $\func{}{}$.

\begin{defn}
\label{defn:Hplus}
For every good bowtie-free graph $G=(V,E)$ denote by $\str{G}^+$ the
$L$-structure with vertex set $V$ and relations and
functions defined as follows:
\begin{enumerate}
 \item $(u,v)\in \rel{G}{}$ if and only if $\{u,v\}$ is an edge of $G$.
 \item $\func{G}{}(v)=u$ if and only if $\{u,v\}$ is contained in at least two triangles of a chimney.
 \item $\func{G}{}(v)=\{u_1,u_2\}$ if and only if $\{v,u_1,u_2\}$ is a triangle of a chimney and $v$ is not contained in multiple triangles.
 \item $\func{G}{}(v)=\{u_1,u_2,u_3\}$ if and only if $\{v,u_1,u_2,u_3\}$ forms a 4-clique in $G$.
\end{enumerate}
\end{defn}

Denote by $\mathcal B$ the class of all $\str{A}$-partite substructures of structures $\str{G}^+$ where $G$ is a good bowtie-free graph.

\begin{thm}
$\mathcal B$ is a free amalgamation class.
\end{thm}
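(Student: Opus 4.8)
The plan is to verify the three conditions of Definition~\ref{defn:amalg}, of which only the free amalgamation property requires work. The hereditary property is immediate, since $\mathcal{B}$ is by definition closed under substructures, and the joint embedding property follows by free amalgamation over the empty structure (which lies in $\mathcal{B}$). So I would focus on showing that the free amalgam $\str{C}$ of $\str{B}_1,\str{B}_2\in\mathcal{B}$ over a common substructure $\str{A}$ again embeds, as a substructure, into $\str{G}^+$ for some good bowtie-free graph $G$; this is exactly what membership in $\mathcal{B}$ means. Throughout I would use the basic structural facts about a good bowtie-free graph: each vertex lies in exactly one chimney or one $K_4$, so that exactly one of $\func{}{1},\func{}{2},\func{}{3}$ is defined on it, and its closure is correspondingly an edge, a triangle, or a $K_4$ contained in that structure. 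Writing $\str{B}_i\subseteq \str{G}_i^+$, note that for a vertex $a$ of the common part $\str{A}$ the value of its unique function is forced by the closure condition to lie in $A$, so the entire core structure of $a$ is contained in $A$.

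The first real step is to show that the underlying graph $H$ of $\str{C}$ (the Gaifman graph of $\rel{C}{}$) is bowtie-free. As the amalgam is free, no edge of $\str{C}$ meets both $B_1\setminus A$ and $B_2\setminus A$, so every triangle of $H$ lies entirely in $B_1$ or entirely in $B_2$. A bowtie consists of two triangles $T_1,T_2$ meeting in a single vertex $p$. If both lie on the same side, say within $B_1$, this gives a bowtie in $G_1$, a contradiction; otherwise $T_1\subseteq B_1$, $T_2\subseteq B_2$ and $p\in B_1\cap B_2=A$. I would then examine the closure-type of $p$: if $p$ lies on a common edge $\{p,q\}$, then---since in a good bowtie-free graph every triangle through a common-edge vertex contains the other endpoint---both $T_1$ and $T_2$ contain $q$ and hence share the edge $\{p,q\}$; if $p$ is an apex it lies in a single triangle, forcing $T_1=T_2$; and if $p$ lies in a $K_4$ then every triangle through $p$ lies in that $K_4\subseteq A$, so $T_1,T_2$ again share an edge. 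In each case $T_1,T_2$ do not meet in a single vertex, so $H$ is bowtie-free.

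Next I would realise $\str{C}$ inside a good bowtie-free graph. Starting from $H$, for every edge $\{u,v\}$ that $\str{C}$ records as a chimney edge (that is, $\func{C}{1}(u)=v$) I add new apex vertices, each adjacent only to $u$ and $v$, until the edge carries at least two apices; this turns every recorded chimney into a genuine one while keeping each new apex in exactly one triangle. Call the result $G$. It is good, since every original vertex of $C$ now sits in an honest chimney or $K_4$ and every added vertex is an apex of a chimney; this is the same kind of completion as in Lemma~\ref{lem:bowtiestructure}, but tailored to respect the recorded function data. It is also bowtie-free: an added apex lies in a single triangle and so cannot be the centre of a bowtie, and a bowtie through an added apex would be centred at a common-edge vertex $u$, contradicting the fact (inherited from $H$, and unaffected by adding apices on $\{u,v\}$) that all triangles through $u$ pass through its common edge. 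Together with the bowtie-freeness of $H$ this leaves no room for a new bowtie.

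Finally I would verify that $\str{C}$ is a substructure of $\str{G}^+$. Attaching pendant apices creates no new edges among $C$, so $\rel{G}{}$ induces exactly $\rel{C}{}$ on $C$; moreover it creates no new $K_4$ (the new vertices have degree two) and no chimney edge beyond those already recorded (any apex forces its common edge to be recorded by $\func{}{1}$). Hence for every $v\in C$ each of $\func{G}{1},\func{G}{2},\func{G}{3}$ is defined on $v$ exactly when the corresponding function of $\str{C}$ is, with the same value, and that value lies in $C$. This is precisely the closure condition for $C$ to be a substructure of $\str{G}^+$, so $\str{C}\in\mathcal{B}$, which establishes the amalgamation property. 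I expect the main obstacle to be the two bowtie-freeness verifications, and in particular the point that any two triangles through a shared vertex of $A$ are forced to share an edge---this is exactly where the unary functions recording the chimney and clique data do the essential work.
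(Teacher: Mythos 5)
Your proof is correct, and it turns on the same key observation as the paper's: because $\str{A}$ is a substructure of the ambient structures, the functions $\func{}{1},\func{}{2},\func{}{3}$ force the base edge, triangle or $4$-clique attached to any vertex of $A$ to lie entirely inside $A$, so two triangles coming from opposite sides of the amalgam and meeting in a vertex of $A$ must in fact share an edge. The organisation, however, is genuinely different. The paper takes the free amalgam $H$ of the two ambient \emph{good} graphs $G_1,G_2$ over $A$, argues that $H$ is again good and bowtie-free (chimneys glue along their bases, $4$-cliques through $A$ lie inside $A$, and free amalgamation creates no new triangles), and then simply observes that $\str{C}$ is a substructure of $\str{H}^+$. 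You instead amalgamate only the substructures $\str{B}_1,\str{B}_2$, check bowtie-freeness of the resulting Gaifman graph by a case analysis on the closure type of the shared vertex, and then must re-run a completion step in the spirit of Lemma~\ref{lem:bowtiestructure} to manufacture a good ambient graph, since the $\str{B}_i$ may record a chimney base without containing any of its apices. The paper's route gets goodness for free at the cost of working with the larger graphs; yours makes the bowtie check fully explicit (the paper compresses it into ``free amalgamation cannot introduce any new triangles'') but carries the extra burden --- which you do address --- of verifying that attaching pendant apices changes neither the edge relation on $C$ nor the values of the three functions there. Both arguments quietly rely on the same structural facts about good bowtie-free graphs (distinct maximal triangle-connected pieces are vertex-disjoint, so each vertex carries exactly one of the three functions and every triangle through it lies in its piece); you state these as known, and a line of justification would be welcome, but this is no worse than the paper's own level of detail.
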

\begin{proof}
Let $\str{A}, \str{B},\str{B}'\in \mathcal B$.  Assume that $\str{A}$ is a substructure
of both $\str{B}$ and $\str{B}'$. We show that the free  amalgamation $\str{C}$ of $\str{B_1}$ and $\str{B}_2$
over $\str{A}$ is in $\mathcal B$.

There are good bowtie free graphs $G_1$ and
$G_2$ such that $\str{B}\subseteq\str{G}^+_1$ and $\str{B}' \subseteq\str{G}^+_2$. We claim that 
that the free amalgamation $H$ of $G_1$ and $G_2$ over $A$ is a good bowtie-free graph. Consequently $\str{H}^+$ (given by Definition~\ref{defn:Hplus}) is the free amalgam of $\str{G}_1^+$ and $\str{G}_2^+$ over $\str{A}$. As $\str{C}$ is a substructure of $\str{H^+}$, it then follows that $\str{C} \in \mathcal{B}$.

Because $\str{A}$ is a substructure of both $\str{G}_1^+$ and $\str{G}_2^+$, the function $\func{}{}$ ensure that the free amalgamation preserves the  structure of chimneys:
if a vertex of a chimney in $G_1$ is identified with a vertex of a chimney in $G_2$ (because it is in $A$) 
then also the bases (i.e. the edges in multiple triangles) of these chimneys are contained in $A$, so are identified in $H$ and the result is again a chimney.
Similarly $\func{}{}$ makes sure that a $4$-clique containing a vertex of $A$ is in $A$.  Finally
free amalgamation cannot introduce any new triangles and thus the free amalgamation
is a good bowtie-free graph $H$. Consequently $\str{H}^+$ is the free amalgam of $\str{G}_1^+$ and $\str{G}_2^+$ over $\str{A}$. 
\end{proof}

\begin{corollary} The class
$\mathcal B$
 has irreducible-structure faithful EPPA;
 $\vv{\mathcal B}$ is a Ramsey class and there is $\mathcal B'\subseteq \vv{\mathcal B}$
with the ordering property (with respect to $\mathcal{B}$). 
\end{corollary}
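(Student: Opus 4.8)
The plan is to observe that all three assertions follow directly from the three main theorems of the paper, now that $\mathcal B$ has been shown to be a free amalgamation class. The only things to verify are the defining features of the language $L$: it consists of the single binary relation $\rel{}{}$ together with the functions $\func{}{1},\func{}{2},\func{}{3}$, each of which has domain arity $1$ and is therefore unary. With these facts recorded, the corollary becomes a triple application.

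First I would note that $\overrightarrow{\mathcal B}$ is a Ramsey class: since $\mathcal B$ is a free amalgamation class of $L$-structures, this is exactly Theorem~\ref{thm:main}, and no further argument is needed. For the ordering property I would then apply Theorem~\ref{thm:ordering} to $\mathcal B$, obtaining an amalgamation class $\mathcal O\subseteq\overrightarrow{\mathcal B}$ of admissible orderings which is itself Ramsey and which has the ordering property with respect to $\mathcal B$. Setting $\mathcal B'=\mathcal O$ yields the desired subclass. Here the closures of vertices in $\mathcal B$ are genuinely non-trivial---a vertex lying on the base of a chimney drags the rest of that base and the apices into its closure---so free orderings do not suffice, and one really does need the admissible orderings of Definition~\ref{def:admissible} rather than the dichotomy of Proposition~\ref{prop:freeorder}.

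Finally, for the extension property I would invoke Theorem~\ref{EPPA}. The crucial point is that every function symbol of $L$ is unary, which is precisely the hypothesis of that theorem; the range arities $r(\func{}{1})=1$, $r(\func{}{2})=2$, $r(\func{}{3})=3$ play no role whatsoever. Theorem~\ref{EPPA} then supplies, for each finite $\str{A}\in\mathcal B$, a finite irreducible-substructure faithful (indeed coherent) EPPA-extension, so the whole class $\mathcal B$ has the irreducible-substructure faithful extension property for partial automorphisms.

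The main (and essentially only) obstacle is the verification already completed in the preceding theorem, namely that $\mathcal B$ is a genuine free amalgamation class in this unary functional language; once that is in hand the corollary is a matter of citing Theorems~\ref{thm:main}, \ref{thm:ordering}, and~\ref{EPPA} in turn. The one subtlety worth flagging is that the EPPA statement depends on unarity in an essential way, so it matters that $\func{}{1},\func{}{2},\func{}{3}$ all have domain arity $1$; by contrast the non-unary function used to encode Steiner systems in Section~\ref{sec:steiner} is exactly the reason EPPA is unavailable there by our methods.
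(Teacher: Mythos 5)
Your proposal is correct and matches the paper's (implicit) argument exactly: the paper derives the corollary by citing Theorems~\ref{thm:main}, \ref{thm:ordering}, and \ref{EPPA} once $\mathcal B$ is known to be a free amalgamation class in a language whose function symbols $\func{}{1},\func{}{2},\func{}{3}$ are all unary. Your remark that only the domain arity (not the range arity) matters for the unarity hypothesis of Theorem~\ref{EPPA} is a correct and worthwhile clarification.
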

The class $\mathcal B'$ can be easily derived from the Definition~\ref{def:admissible}.
\begin{figure}[t]
\centering
\includegraphics{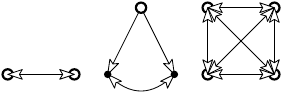}
\caption{All isomorphism types of closure-extensions in $\mathcal B$.}
\label{fig:closures}
\end{figure}
There are only three types of closure-extensions in $\mathcal B$ depicted in Figure~\ref{fig:closures} (with arrows representing function $\func{}{}$ and circles denoting the vertices of maximal level).
It follows that vertices are ordered by size of their closures (here we make use of the definition of $\preceq$ refining the order given by number of vertices).
That is vertices in bases of chimney are first, vertices in the top of chimneys next and vertices in 4-cliques last.
Every vertex-closure forms an interval. Pair of vertices of level 1 (in the top of chimneys) form
homologous extensions if and only if they belong to the same chimney. It follows that for every chimney
the set of its top vertices forms an interval and the  relative order of these intervals corresponds to the relative order of corresponding bases.

\begin{remark}
The Ramsey property and an explicit description of the admissible
ordering was given in~\cite{Hubivcka2014}.  The relational language used  there is however
more complicated and does not preserve all automorphisms of the \Fraisse{}
limit of $\mathcal B$. This makes it unsuitable for the extension property for partial automorphisms.
The formulation here is a more optimized version.

The EPPA for bowtie-free graphs was states as a problem in Siniora's thesis~\cite{Siniora2} and is attributed to Macpherson.

The argument above together with the observation that in the \Fraisse{} limit $\str{B}$ of $\mathcal B$ 
we have that for every finite $S\subseteq B$, $\vert \cl_\str{B}(S)\vert \leq 3\lvert S \rvert$, also gives
a compact proof for the existence of an $\omega$-categorical countable universal bowtie-free graph.
This bound follows from the fact that function $\func{}{}$ cannot cascade.
The $\omega$-categoricity follows from the fact that the orbit of $S$ in $\Aut(\str{B})$ is fully determined by the isomorphism type of $\cl_\str{B}(S)$
and there are only finitely many closures for every finite $S$.
This, of course, is just a re-formulation of the argument in~\cite{Cherlin1999}.
\end{remark}
\section{Concluding remarks}
\paragraph{1.}
It would be interesting to extend Theorem~\ref{EPPA} to a class of structures which include non-unary functions. Perhaps this is too much to ask as EPPA is presently open even in the case of Steiner triple systems (as we remark in Section~\ref{sec:steiner}). However note that our structures involve set-valued functions and thus the EPPA may be easier to prove. However even for partial triple systems the EPPA seems to be presently open.
\paragraph{2.} On the structural Ramsey theory side open problems include Ramsey properties of finite lattices and other algebraic structures where the axioms (such as associativity) are difficult to control in an amalgamation procedure. See \cite{Hubicka2016,Aranda2017} for results on Ramsey classes.
\bigskip

\noindent\textit{Acknowledgements:\/ }
We would like to thank to Daoud Siniora for several useful discussions concerning
clique faithful EPPA and the notion of coherency.
We are also grateful to the anonymous referee or detailed report which improved
presentation of this paper.

\bibliography{ramsey.bib}
\end{document}